\documentclass[a4paper,fleqn]{cas-sc}
\usepackage[utf8]{inputenc}

\usepackage[numbers]{natbib}
\usepackage{titlesec}
\titleformat{\section}[block]
  {\normalfont\bfseries}
  {\thesection}{1em}{}
\titleformat{\subsection}[block]
  {\normalfont\itshape}
  {\thesubsection}{1em}{}
\titleformat{\subsubsection}[block]
  {\normalfont\itshape}
  {\thesubsubsection}{1em}{}

\usepackage{type1cm}
\usepackage{anyfontsize}

\DeclareMathAlphabet{\mathboondoxcal}{U}{BOONDOX-cal}{m}{n}
\newcommand{\jay}{\mathboondoxcal{j}}

\usepackage{amsthm}
\usepackage{mathtools}
\usepackage{colortbl}
\usepackage{stmaryrd}
\usepackage{scalefnt}

\newtheorem{theorem}{Theorem}
\newtheorem{corollary}[theorem]{Corollary}
\newtheorem{assumption}{Assumption}
\newtheorem{lemma}[theorem]{Lemma}

\theoremstyle{remark}
\newtheorem{rmk}{Remark}

\theoremstyle{definition}
\newtheorem{problem}{Problem}

\numberwithin{equation}{section}
\numberwithin{theorem}{section}
\numberwithin{problem}{section}

\usepackage[compatibility=false]{caption}
\usepackage{subcaption}
\usepackage{threeparttable}

\usepackage{tikz}
\usepackage{pgfplots}
\usetikzlibrary{calc, arrows.meta, positioning}
\usepackage{booktabs}
\usepackage{siunitx}
\usepackage{makecell}
\usepackage{pgfplotstable}
\usepackage{booktabs}
\usepgfplotslibrary{groupplots}
\pgfplotsset{colormap={vibrant}{rgb255=(68,1,84) rgb255=(33,144,141) rgb255=(253,231,37)}}
\pgfplotsset{compat=1.18}
\usepackage{float}
\usepackage{makecell}
\pgfplotsset{
  colormap={viridis_exact}{
    rgb=(0.267004, 0.004874, 0.329415)  
    rgb=(0.282623, 0.140926, 0.457517)  
    rgb=(0.253935, 0.265254, 0.529983)  
    rgb=(0.207063, 0.371774, 0.553117)  
    rgb=(0.163625, 0.471133, 0.558148)  
    rgb=(0.127568, 0.566949, 0.550556)  
    rgb=(0.134692, 0.658636, 0.517649)  
    rgb=(0.208030, 0.748751, 0.472873)  
    rgb=(0.327796, 0.827199, 0.406859)  
    rgb=(0.477504, 0.821444, 0.318195)  
    rgb=(0.627976, 0.855711, 0.223729)  
    rgb=(0.741388, 0.873449, 0.149561)  
    rgb=(0.845561, 0.887322, 0.099702)  
    rgb=(0.926106, 0.896785, 0.107893)  
    rgb=(0.993248, 0.906157, 0.143936)  
  }
}
\definecolor{sandstone}{RGB}{194,192,63}
\definecolor{siltloam}{RGB}{106,148,63}
\definecolor{beitnetofaclay}{RGB}{105,104,180}

\definecolor{ddH1}{HTML}{08306b}
\definecolor{ddH2}{HTML}{2171b5}
\definecolor{ddH3}{HTML}{6baed6}
\definecolor{ddH4}{HTML}{bdd7e7}
\definecolor{lsH1}{HTML}{67000d}
\definecolor{lsH2}{HTML}{cb181d}
\definecolor{lsH3}{HTML}{ef3b2c}
\definecolor{lsH4}{HTML}{fc9272}
\definecolor{picH1}{HTML}{00441b}
\definecolor{picH2}{HTML}{238b45}
\definecolor{picH3}{HTML}{74c476}
\definecolor{picH4}{HTML}{c7e9c0}
\definecolor{newH1}{HTML}{7f2704}
\definecolor{newH2}{HTML}{d94801}
\definecolor{newH3}{HTML}{f16913}
\definecolor{newH4}{HTML}{fdae6b}
\usepackage{enumitem}

\usepackage{hyperref}
\usepackage{cleveref}
\crefname{notation}{notation}{notations}
\Crefname{notation}{Notation}{Notations}
\Crefname{problem}{Problem}{Problems}
\crefname{lemma}{lemma}{lemmas}
\Crefname{lemma}{Lemma}{Lemmas}

\def\tsc#1{\csdef{#1}{\textsc{\lowercase{#1}}\xspace}}
\tsc{WGM}
\tsc{QE}
\tsc{EP}
\tsc{PMS}
\tsc{BEC}
\tsc{DE}

\begin{document}
\input{figures.tex}

\let\WriteBookmarks\relax
\def\floatpagepagefraction{1}
\def\textpagefraction{.001}
\shorttitle{mLRDD-scheme for Richards' Equation}
\shortauthors{Synnev\aa g et~al.}

\title[mode=title]{Linear Robin-type Domain Decomposition scheme for Mixed Formulation of Richards' Equation}
\author[1]{\AA smund v.B. Synnev\aa g}[orcid=0009-0002-2407-6539]
\cormark[1]
\ead{asmund.synnevag@uib.no}

\author[2]{Wietse M. Boon}[orcid=0000-0003-4080-2369]

\author[1]{Florin A. Radu}[orcid=0000-0002-2577-5684]

\author[3]{Sarah E. Gasda}[orcid=0000-0002-2610-8322]

\affiliation[1]{organization={Department of Mathematics, University of Bergen},
                addressline={Allégaten 41}, 
                postcode={5007}, 
                postcodesep={}, 
                city={Bergen},
                state={Vestland},
                country={Norway}}

\affiliation[2]{organization={Faculty of Mathematics, University of Duisberg-Essen},
                addressline={Thea-Leymann-Straße 9}, 
                postcode={45127}, 
                postcodesep={}, 
                city={Essen},
                country={Germany}}

\affiliation[3]{organization={NORCE Norwegian Research AS},
                addressline={Nygårdsgaten 112}, 
                postcode={5008}, 
                postcodesep={}, 
                city={Bergen},
                state={Vestland},
                country={Norway}}

\cortext[cor1]{Corresponding author:}

\begin{abstract}
    In this work, we present a linear Robin-type domain decomposition scheme for solving the mixed formulation of Richards' equation (mLRDD-scheme), governing flow in variably saturated porous media. Assuming a highly heterogeneous porous medium consisting of multiple blocks/layers of distinct materials, we apply non-overlapping domain decomposition to isolate individual materials, yielding near-homogeneous conditions within each subdomain. Richards' equation is discretized in time by backward Euler and linearized using the L-scheme. A Robin-type interface condition, consistent with the mixed formulation, is then derived to couple the subdomains. For spatial discretization, mixed finite elements are employed, ensuring local mass conservation and providing the flux variables consistent with the Robin condition. The convergence of the proposed scheme is rigorously proved. Heterogeneous, multidomain numerical examples in 2D/3D are presented to demonstrate the efficiency and robustness of the scheme, along with numerical validation of the theoretical convergence results.
\end{abstract}

\begin{keywords}
Porous Media \sep Richards' Equation \sep Mixed Finite Elements \sep L-scheme \sep Robin-type Domain Decomposition
\end{keywords}

\begin{NoHyper}
\maketitle
\end{NoHyper}

\section{Introduction} \label{sec:introduction}
    Flow in porous media has many relevant applications, such as soil pollution tracking, CO$_2$ sequestration, nuclear waste management, enhanced geothermal energy extraction, and cancer research, among others. When modeling flow in a variably saturated porous medium, where the pore space is shared by water and air, assuming constant pressure for the air phase yields Richards' equation \cite{richards1931, richardson1922weather}. This formulation captures complex physical behaviors, such as sharp wetting fronts and steep pressure gradients, causing the governing equation to dynamically shift between elliptic in fully saturated zones and parabolic in unsaturated zones. Consequently, Richards' equation is a highly nonlinear, degenerate parabolic-elliptic equation, and the creation of efficient, robust numerical methods remains a major challenge \cite{farthing2017numerical}. Meeting this challenge in practice requires taking three things into consideration: a spatial discretization that stays physically faithful to the underlying flow and mass, a linearization scheme robust enough for the resulting nonlinear systems, and, in realistic heterogeneous geology, a strategy for handling highly contrasting material properties. Each of these aspects has been well studied in isolation. This paper, however, explores a solution that handles all three aspects simultaneously. 

    There are many works concerning discretization techniques for Richards' equation. For temporal discretization, backward Euler is used almost exclusively, as higher-order schemes are not recommended for solutions of such low regularity as for Richards' equation. For spatial discretization, Galerkin finite elements \cite{aavatsmark1998discretization, arbogast1993numerical, nochetto1988approximation, slodicka2002robust}, finite volumes \cite{manzini2004mass, eymard1999finite, eymard2006combined}, or mixed finite element methods (MFEM) \cite{arbogast1996nonlinear, radu2004order, bergamaschi1999mixed, woodward2000analysis} are commonly used. MFEM are particularly attractive due to their direct evaluation of the mass balance equation, giving mass conservation with appropriate function spaces.  Furthermore, MFEM treats both pressure and flux as primary variables, giving a direct computation of the flux as a primary unknown, rather than via post-processing steps. These attributes come at a price, however: expressing Richards' equation in mixed form couples pressure and flux into a nonlinear saddle-point system at every time step, unlike the single-field systems produced by a primal, pressure-only formulation.
    
    Solving the nonlinear saddle-point system at each timestep requires the choice of a linearization scheme, where the scheme is just as decisive for the robustness and efficiency of the simulation as the spatial discretization itself. For Richards' equation, two main linearization schemes are typically used: the quadratically but only locally convergent Newton method (this includes the nested variant of \cite{casulli2010}), and linearly convergent but more robust fixed-point schemes; this includes the Picard method, the modified Picard method \cite{celia1990general}, and the L-scheme \cite{list2016study,pop2004mixed} (including its modified variant \cite{mitra2019modified}). While the Newton method offers quadratic convergence, its application in the mixed setting requires the construction of complex derivative matrices \cite{bergamaschi1999mixed}, which can lead to ill-conditioned systems. The L-scheme elegantly bypasses this by replacing the constitutive derivatives with a fixed stabilization constant, making it unconditionally convergent under mild constraints \cite{stokke2023adaptive}, even for an inaccurate initial guess. However, as recently shown by Maier et al. \cite{maier2023}, no single linearization method -- neither the L-scheme, Newton, nor Picard -- is uniformly robust across realistic soil types and boundary conditions. In other words, the nonlinearities of the underlying problem alone do not cause a scheme to fail, but also how it interacts with the heterogeneity of the domain in question. This highlights a critical challenge: how to maintain MFEM's faithfulness to the underlying physics while achieving robust convergence over heterogeneous domains. 
    
    The introduction of heterogeneity to a system does not only cause a minor complication. Real-life subsurface domains are inherently heterogeneous, typically layered or composed of blocks of different soils, e.g., sand, clay, or loam, resulting in sharp material interfaces. These sharp interfaces can exacerbate the governing nonlinearities, further limiting the use of lower-order methods. One solution for lower-order methods is enhancement via acceleration (e.g., Anderson acceleration \cite{Anderson1965, stokke2023adaptive}) or hybridization with Newton \cite{bergamaschi1999mixed, list2016study} to ensure reliable convergence in such complex regimes. Another approach is to apply domain decomposition (DD) methods, where domains are split into an arbitrary number of subdomains. By splitting along sharp material interfaces, DD methods handle the heterogeneity naturally,  limiting the material interaction within a subdomain, isolating severe nonlinearities within local subproblems, and coupling the subproblems through interface conditions. One of the fundamental DD approaches is through the classical overlapping Schwarz methods \cite{lions1988schwarz, gander2005overlapping}. Overlapping Schwarz has been shown by \cite{lions1988schwarz} to converge for nonlinear elliptic problems, though the method exhibits slow convergence. Also, the overlapping of different material regions defeats the purpose of solving local, close-to-homogeneous problems. Another, more suitable approach is the non-overlapping methods. There has been research on Dirichlet-Neumann, Neumann-Neumann variants for parabolic problems \cite{gander2013dirichlet}, optimal transmission conditions \cite{gander2015optimized, gander2003optimal}, and Robin-type variants \cite{gander2015optimized, lions1990schwarz}. The Robin-type methods provide faster and more robust alternatives, at the cost of careful parameter tuning and interface coupling. Here, not only the parameters, but the parameterizations (i.e., constitutive relations), differ across the interface. This demands a careful formulation of the interface conditions \cite{ahmed2019posteriori, seus2018linear}, such that the subproblems adhere to the underlying physics and functional setting. 
    
    For the mixed formulation of Richards' equation, the Robin-type interface condition requires well-defined normal fluxes across subdomain boundaries — both to respect the continuity of the physical flux and to satisfy the pressure-flux coupling in the appropriate functional framework. This is precisely what MFEM provides: by placing the flux and pressure in suitable regular spaces, accurate normal fluxes arise as part of the primary unknowns, continuous across element boundaries and without post-processing. This gives a natural pairing between MFEM and non-overlapping Robin-type domain decomposition. Moreover, MFEM provides well-established \textit{a priori} error estimates, giving us a theoretical foundation for validation. At the semi-discrete level, Seus et al. \cite{seus2018linear} demonstrated that Robin-type interface conditions can be successfully combined with the L-scheme for the primal formulation of Richards' equation, where pressure is the primary variable. They proved convergence of the scheme in a semi-discrete setting and performed numerical simulations using a cell-centered two-point flux approximation variant of the finite volume method. We extend this work to the mixed formulation, and with the following steps, we derive the proposed mixed L-scheme Robin-type domain decomposition scheme (mLRDD-scheme). By applying non-overlapping Robin-type domain decomposition to the mixed formulation of Richards' equation, we localize and decouple the nonlinearities. By applying the L-scheme, we bypass the need for exact Jacobians, ensuring robust convergence even across sharp material interfaces. By discretizing using MFEM, with $H(\mathrm{div};\Omega)-L^2(\Omega)$ conforming elements, we achieve local mass conservation and naturally obtain accurate quantities for the Robin-type interface conditions, while giving a solid theoretical foundation.
    
    To summarize, the new contributions of this paper are:
    
    \begin{itemize}
    \item{a new non-overlapping domain decomposition MFEM/L-scheme scheme, with flux-based Robin-type interface conditions, for mixed formulation of Richards' equation in heterogeneous domains}
    \item{a rigorous convergence analysis of the new scheme}
    \item{a numerical comparison of the new scheme in relation to a monolithic approach with Newton, Picard, and L-scheme type linearizations for multiple heterogeneous domains, in two and three spatial dimensions}
    \end{itemize}
    
    The paper is structured as follows: in Section~\ref{sec:problem} we introduce the notation, functional setting, the equations, temporal discretization and linearization, concluding with the iterative mLRDD-scheme; in Section~\ref{sec:analysis} we analyse the convergence of the proposed scheme; Section~\ref{sec:numeric} presents two- and three-dimensional numerical examples, giving a comprehensive comparison to monolithic schemes; Section~\ref{sec:conclusion} ends the paper with a concluding section.

\section{Problem formulation and notation conventions}\label{sec:problem}

    In this paper, we will use common notations from functional analysis. For a complete summary of the functional setting utilized throughout this paper, see Appendix~\ref{sec:appendix}. The following section presents the mixed formulation of Richards' equation in the continuous setting. Later, we will apply non-overlapping domain decomposition, followed by temporal discretization and linearization. The weak formulation is then introduced, together with the derivation of the Robin-type interface conditions. Lastly, the final iterative scheme, the mLRDD scheme, is presented.

\subsection{The mixed formulation of Richards' equation}
     Let $\Omega \subset \mathbb{R}^d $, $d\in\{ 2,3 \}$ be an open, bounded domain with a Lipschitz-continuous boundary $\partial\Omega$. We assume that the boundary is partitioned into a Dirichlet part $\partial\Omega_D$ and a Neumann part $\partial\Omega_N$ such that $\partial\Omega = \overline{\partial\Omega}_D \cup \overline{\partial\Omega}_N$ and $\partial\Omega_D \cap \partial\Omega_N = \emptyset$. Richards' equation, in its mixed pressure-flux formulation, reads
    \begin{subequations} \label{eq:1}
        \begin{alignat}{2}
            \hat{\mathbf{q}} &= -\frac{\mathbf{K}}{\mu} k_r\bigl(S(p)\bigr)\nabla\!\bigl(p - \rho g_z z \bigr) &\qquad\text{in } \Omega\times(0,T], \label{eq:1a}\\
            \phi \partial_t S(p) + \nabla \cdot \hat{\mathbf{q}} &= f &\qquad\text{in } \Omega\times(0,T], \label{eq:1b}
        \end{alignat}
    \end{subequations}
    where \eqref{eq:1a} represents the Darcy flow equation and \eqref{eq:1b} represents the mass balance equation, with water pressure $p$ and flux $\hat{\mathbf{q}}$ as primary unknowns. $T$ denotes the final time. The coefficient functions $k_r(\cdot)$ and $S(\cdot)$ are the nonlinear, pressure-dependent relative permeability and water saturation, respectively. The known physical parameters are porosity $\phi$, intrinsic permeability $\mathbf{K}$, water density and viscosity $\rho$ and $\mu$, gravitational acceleration $g_z$, and the source term $f$. Table (\ref{tab:Notations}) gives a comprehensive list of notation and can be seen in Appendix \ref{sec:appendix}. Assuming constant porosity, we define the collective terms
    \begin{align}
        \mathbf{q} &\coloneqq \frac{\hat{\mathbf{q}}}{\phi}, &
        k^{-1}(S(p)) &\coloneqq \left[\frac{\mathbf{K}}{\mu \phi} k_r\bigl(S(p)\bigr)\right]^{-1}, &
        \eta &\coloneqq \rho g_z z, & 
        \mathcal{F} &\coloneqq \frac{f}{\phi}.
    \end{align}
    
    With the addition of homogeneous boundary and initial conditions, using the defined terms in \eqref{eq:1}, the strong formulation of Richards' equation becomes
    \begin{subequations} \label{eq:2}
        \begin{align}
            k^{-1}(S(p))\ \mathbf{q} + \nabla p &= \nabla \eta & \text{in } &\Omega\times(0,T]\label{eq:2a}, \\
            \partial_t S(p) + \nabla \cdot \mathbf{q} &= \mathcal{F} & \text{in } &\Omega\times(0,T] \label{eq:2b}, \\
            \mathbf{q} \cdot \mathbf{n} &= 0 &\text{on } &\partial \Omega_N \times(0,T], \\
            p &= 0 &\text{on } &\partial \Omega_D \times(0,T], \\
            p &= p_0 &\text{in } &\Omega \times \{0\},
        \end{align}
    \end{subequations}
    where $p_o \in L^2(\Omega)$ is a given function. In the remainder of \Cref{sec:problem} and the following \Cref{sec:analysis}, the boundary and initial conditions in \eqref{eq:2} are inherited, although they have been omitted for brevity. For simplicity, we restrict the above formulation to homogeneous boundary conditions. The extension to non-homogeneous boundary conditions proceeds analogously. 
    
\subsection{Non-overlapping domain decomposition}
     The domain $\Omega$ is partitioned into two non-overlapping Lipschitz subdomains $\Omega_1$ and $\Omega_2$ such that $\overline{\Omega} = \overline{\Omega}_1 \cup \overline{\Omega}_2$ and $\Omega_1 \cap \Omega_2 = \emptyset$. 
    Let $\partial\Omega_1$ and $\partial \Omega_2$ denote their respective boundaries, separated by the internal interface $\Gamma \coloneqq \partial \Omega_1 \cap \partial \Omega_2$, which forms a $(d-1)$-dimensional manifold in $\overline{\Omega}$. For simplicity, we assume that each subdomain borders (part of) the Dirichlet boundary, i.e., $\partial\Omega_\ell \cap \partial \Omega_D \ne \emptyset$ for each subdomain index $\ell \in \{1,2\}$. 
    Let $\mathbf{n}_\ell$ denote the outward unit normal vector on $\partial\Omega_\ell$; in particular, $\mathbf{n}_1 = -\mathbf{n}_2$ on $\Gamma$. Richards' equation on subdomain $\Omega_\ell$ is then given by 
    \begin{subequations} \label{eq:3}
        \begin{alignat}{2} 
            k_\ell^{-1}(S_\ell(p_\ell)) \, \mathbf{q}_\ell + \nabla p_\ell &= \nabla \eta_\ell &\qquad\text{in } \Omega_\ell\times(0,T], \\ 
            \partial_t S_\ell(p_\ell) + \nabla \cdot \mathbf{q}_\ell &= \mathcal{F}_\ell &\qquad\text{in } \Omega_\ell\times(0,T], \\
            \mathbf{q}_1 \cdot \mathbf{n}_1 &= -\mathbf{q}_2 \cdot \mathbf{n}_2 &\qquad\text{on } \Gamma\times(0,T], \label{eq:3c}\\
            p_1 &= p_2 &\qquad\text{on } \Gamma\times(0,T], \label{eq:3d}
        \end{alignat}
    \end{subequations}
    with $\ell \in \{1,2\}$ being the subdomain index. Here, and throughout this work, a subscript $\ell$ on a variable denotes its restriction to $\Omega_\ell$. The transmission conditions \eqref{eq:3c} and \eqref{eq:3d} ensure the continuity of flux and pressure across the interface $\Gamma$. We impose the following assumptions on $k_\ell$ and $S_\ell$.

    \begin{assumption}{} \label{AssumptionOne}
        For each subdomain $\ell\in\{1,2\}$,
        \begin{enumerate}[label=(\alph*)]
            \item The function $k_\ell: [0,1] \to [m_{k_\ell}, M_{k_\ell}]$ is Lipschitz continuous, with $0< m_{k_\ell} \le M_{k_\ell} < \infty$. Consequently, the inverse permeability $k_\ell^{-1}: [0,1] \to [m_{\bar{k}_\ell},M_{\bar{k}_\ell}]$ is Lipschitz continuous, with $0< m_{\bar{k}_\ell} \le M_{\bar{k}_\ell} < \infty$ and Lipschitz constant $L_{{\bar k}_\ell}$.
            \item The function $S_\ell: \mathbb{R} \to [0,1]$ is monotonically increasing and Lipschitz continuous, with Lipschitz constant $L_{S_\ell}$.
        \end{enumerate}
    \end{assumption}
    
\subsection{Temporal discretization and linearization}
    Next, we derive the semi-discrete, linearized version of \eqref{eq:3}. For the sake of clarity, the interface conditions are omitted and will be treated in \Cref{sec: robin-robin}. First, we partition the time interval $[0,T]$ into uniform subintervals $N\in\mathbb{N}$, with $n=1,...,N$ being the time index, giving the corresponding time step $\tau  \coloneqq \frac{T}{N} $. Let $p_\ell^n, \mathbf{q}_\ell^n$ denote the pressure and flux at time $t^n \coloneqq \tau n$. Using the backward Euler method, \eqref{eq:3} becomes
    \begin{alignat*}{2}
        k_\ell^{-1}(S_\ell(p_\ell^{n})) \, \mathbf{q}_\ell^{n} + \nabla p_\ell^{n} &= \nabla \eta_\ell &\qquad\text{in } \Omega_\ell, \\
        S_\ell(p_\ell^{n}) - S_\ell(p_\ell^{n-1}) + \tau \nabla \cdot \mathbf{q}_\ell^{n} &= \tau \mathcal{F}_\ell &\qquad\text{in } \Omega_\ell, 
    \end{alignat*}
    for $\ell\in\{1,2\}$, where at time $t= 0$, we have $p_\ell^0 = {p_0}_{| \Omega_\ell}$.
    
    To handle the nonlinear dependency in the saturation, we linearize the mass balance equation using the L-scheme \cite{pop2004mixed, list2016study}. For the permeability, we lag the nonlinearity in the Darcy flow equation by one iteration. Thus, at iteration $i \in \mathbb{N}$, the permeability and saturation is evaluated at the previous iteration $i-1$. For readability, we introduce the shorthand notation 
    \begin{align}
        \bar{k}_\ell^{n,i-1} &\coloneqq k_\ell^{-1}(S_\ell(p_\ell^{n,i-1})), &
        S_\ell^{n,i-1} &\coloneqq S_\ell(p_\ell^{n,i-1}).
    \end{align}
    Introducing the stabilization parameter $L_\ell > 0$ for the L-scheme, the semi-discrete linearized system for each subdomain $\ell$ is given by
    \begin{subequations} \label{eq:6}
        \begin{alignat}{2}
            \bar{k}_\ell^{n,i-1} \, \mathbf{q}_\ell^{n,i} + \nabla p_\ell^{n,i} 
            &= \nabla \eta_\ell &\qquad\text{in } \Omega_\ell, \\
            S_\ell^{n,i-1} - S_\ell^{n-1} + L_\ell(p_\ell^{n,i} - p_\ell^{n,i-1}) + \tau \nabla \cdot \mathbf{q}_\ell^{n,i} 
            &= \tau \mathcal{F}_\ell &\qquad\text{in } \Omega_\ell. 
        \end{alignat}
    \end{subequations}
    for $\ell\in\{1,2\}$.

\subsection{Weak formulation}
    We continue with the introduction of the weak formulation of the linearized domain-decomposition scheme \eqref{eq:6}. For that, we first introduce the functional setting.
    Let $L^2(X)$ denote the space of square-integrable functions on $X \in\{\Omega, \Omega_\ell, \Gamma\}$, equipped with the inner product $\langle u,v \rangle_X \coloneqq \int_X u \ v \ \mathrm{d}x$ (with $\mathbf u \cdot \mathbf v$ replacing $uv$ for vector-valued functions) and induced norm $\lVert u \rVert_{L^2(X)} \coloneqq \langle u,u \rangle_X^{1/2}$. Let $\langle u,v \rangle_{\Omega_\ell}$ denote the inner product on $L^2(\Omega_\ell)$, $\ell \in \{1,2\}$, and $\lVert u \rVert_{\Omega_\ell}$ the corresponding norm. We omit the subscript to refer to the global $L^2$ inner product and norm
    \begin{align} \label{Notation1}
        D\langle u, v \rangle &\coloneqq \sum_{\ell=1}^2 D_\ell \langle u_\ell, v_\ell \rangle_{\Omega_\ell}, &
        D\lVert u \rVert^2 &\coloneqq \sum_{\ell=1}^2D_\ell \lVert u_\ell \rVert_{L^2(\Omega_\ell)}^2,
    \end{align}
    for some constant $D_\ell$. Similarly to inner products and norms, the omission of the subscript $\ell$ on any variable, e.g., $u$, denotes the global quantity defined over the entire domain $\Omega$. 
    
    Next, we introduce the standard Sobolev spaces. The Hilbert space for vector functions with square-integrable divergence is defined as
    \begin{equation*}
        H(\mathrm{div}; \Omega) \coloneqq \{ \tilde{\mathbf{q}} \in [L^2(\Omega)]^d \mid \nabla \cdot \tilde{\mathbf{q}} \in L^2(\Omega) \},
    \end{equation*}
    equipped with the norm
    \begin{equation} \label{eq:Hdiv norm}
        \lVert \tilde{\mathbf{q}} \rVert_{H(\mathrm{div}; \Omega)}^2 \coloneqq \lVert \tilde{\mathbf{q}} \rVert^2 + \lVert \nabla \cdot \tilde{\mathbf{q}} \rVert^2.
    \end{equation}
    For the flux and pressure variables, we consider the following function spaces defined on the subdomains
    \begin{align} \label{eq:local flux space}
        Q_\ell  &\coloneqq \{
            \tilde{\mathbf{q}}_\ell \in H(\mathrm{div}; \Omega_\ell) 
            \mid (\tilde{\mathbf{q}}_\ell \cdot \mathbf{n}_\ell)|_{\Gamma} \in L^2(\Gamma), \ 
            (\tilde{\mathbf{q}}_\ell \cdot \mathbf{n}_\ell)|_{\partial \Omega_N \cap \partial\Omega_\ell} =0 \}, \\
        P_\ell  &\coloneqq  L^2(\Omega_\ell).
    \end{align}
    and we equip $Q_\ell$ with the graph norm
    \begin{equation*}
        \lVert\tilde{\mathbf{q}}_\ell\rVert^2_{Q_\ell}  \coloneqq  \lVert\tilde{\mathbf{q}}_\ell\rVert_{\Omega_\ell}^2 + \lVert\nabla \cdot \tilde{\mathbf{q}}_\ell\rVert_{\Omega_\ell}^2 + \lVert\tilde{\mathbf{q}}_\ell \cdot \mathbf{n}_\ell\rVert^2_{L^2(\Gamma)}.
    \end{equation*}
    We, moreover, define the global product spaces
    \begin{equation*} 
        Q \coloneqq  Q_1 \times Q_2, \qquad
        P \coloneqq  P_1 \times P_2. \qquad
    \end{equation*}

    With the function spaces defined, we are ready to derive the weak formulation.
    Testing \eqref{eq:6} against $\tilde{\mathbf{q}}_\ell \in Q_\ell$ and $\tilde{p}_\ell \in P_\ell$, and integrating over $\Omega_\ell$,
    gives the intermediate form
    \begin{alignat*}{2}
        \langle \bar{k}_\ell^{n,i-1} \mathbf{q}_\ell^{n,i}, \tilde{\mathbf{q}}_\ell \rangle_{\Omega_\ell} 
        + \langle \nabla p_\ell^{n,i}, \tilde{\mathbf{q}}_\ell \rangle_{\Omega_\ell} 
        &= \langle \nabla \eta_\ell, \tilde{\mathbf{q}}_\ell \rangle_{\Omega_\ell}, \\
        \langle S_\ell^{n,i-1} - S_\ell^{n-1}, \tilde{p}_\ell \rangle_{\Omega_\ell}
        + L_\ell\langle p_\ell^{n,i} - p_\ell^{n,i-1}, \tilde{p}_\ell \rangle_{\Omega_\ell} 
        + \tau \langle \nabla \cdot \mathbf{q}_\ell^{n,i}, \tilde{p}_\ell \rangle_{\Omega_\ell} 
        &= \tau\langle \mathcal{F}_\ell, \tilde{p}_\ell \rangle_{\Omega_\ell}.
    \end{alignat*}
    Applying integration by parts, the boundary terms vanish due to the boundary conditions, and we are left with only the interface contributions. That is, 
    \begin{subequations} \label{eq:10} 
        \begin{alignat}{2}
            \langle \bar{k}_\ell^{n,i-1}\mathbf{q}_\ell^{n,i}, \tilde{\mathbf{q}}_\ell \rangle_{\Omega_\ell} 
            - \langle p_\ell^{n,i}, \nabla \cdot \tilde{\mathbf{q}}_\ell \rangle_{\Omega_\ell} 
            + \langle p_\ell^{n,i}, \tilde{\mathbf{q}}_\ell \cdot \mathbf{n}_\ell \rangle_\Gamma
            &= \langle \nabla \eta_\ell, \tilde{\mathbf{q}}_\ell \rangle_{\Omega_\ell}, \label{eq:10a}\\
            \langle S_\ell^{n,i-1} - S_\ell^{n-1}, \tilde{p}_\ell \rangle_{\Omega_\ell}
            + L_\ell\langle p_\ell^{n,i} - p_\ell^{n,i-1}, \tilde{p}_\ell \rangle_{\Omega_\ell} 
            + \tau \langle \nabla \cdot \mathbf{q}_\ell^{n,i}, \tilde{p}_\ell \rangle_{\Omega_\ell} 
            &= \tau\langle \mathcal{F}_\ell, \tilde{p}_\ell \rangle_{\Omega_\ell}. \label{eq:10b}
        \end{alignat}
    \end{subequations}

    \subsection{Reformulating the interface conditions as a Robin condition}
    \label{sec: robin-robin}
    As the pressure $p$ does not reside naturally in the trace space, we need a way to handle the interface terms in \eqref{eq:10a}. Instead of introducing some new interface variable, e.g., Lagrange multipliers $\lambda$, as an approximation to $p$, we now rewrite the pressure on the interface in terms of the normal components of the flux $\mathbf{q}$ based on the flux and pressure continuity across the interface $\Gamma$. In this way, the idea can be viewed as rewriting the Neumann boundary condition in \eqref{eq:3c} and the Dirichlet boundary condition in \eqref{eq:3d} into two Robin-type boundary conditions, one for each subdomain; see e.g.~\cite{seus2018linear}. With this in mind, instead of \eqref{eq:3c} and \eqref{eq:3d}, one can equivalently impose
    \begin{subequations} \label{eq:11}
        \begin{alignat}{1}
            p_1 - p_2  
            &= \alpha(\mathbf{q}_1 \cdot \mathbf{n}_1 
            + \mathbf{q}_2 \cdot \mathbf{n}_2), \label{eq:11a}\\
            p_2 - p_1
            &= \alpha(\mathbf{q}_1 \cdot \mathbf{n}_1 
            + \mathbf{q}_2 \cdot \mathbf{n}_2), \label{eq:11b}
        \end{alignat}
    \end{subequations}
    for some bounded $\alpha > 0$, where $\alpha$ is problem dependent. Clearly, if the flux and pressure continuities are satisfied, the equations above must hold. We can rewrite \eqref{eq:11} to obtain expressions for the pressure traces. A straightforward rearrangement yields
    \begin{alignat*}{1}
        p_1 &=\alpha \mathbf{q}_1 \cdot \mathbf{n}_1 
        + (\alpha \mathbf{q}_2 \cdot \mathbf{n}_2 + p_2), \\
        p_2 &= \alpha \mathbf{q}_2 \cdot \mathbf{n}_2 
        + (\alpha \mathbf{q}_1 \cdot \mathbf{n}_1 + p_1).
    \end{alignat*}
    Denoting the terms in parentheses as the Robin-variable 
    \begin{subequations} \label{eq:13}
        \begin{alignat}{1}
            g_1 &\coloneqq \alpha \mathbf{q}_2 \cdot \mathbf{n}_2 + p_2,\label{eq:13a}\\
            g_2 &\coloneqq \alpha \mathbf{q}_1 \cdot \mathbf{n}_1 + p_1, \label{eq:13b}
        \end{alignat}
    \end{subequations}
    we arrive at the expressions
    \begin{subequations} \label{eq:14}
        \begin{alignat}{1}
            p_1 &= \alpha \mathbf{q}_1 \cdot \mathbf{n}_1 + g_1, \label{eq:14a}\\
            p_2 &= \alpha \mathbf{q}_2 \cdot \mathbf{n}_2 + g_2. \label{eq:14b}
        \end{alignat}
    \end{subequations}

    Next, in analogy with the previous subsection, we introduce a superscript $n$ to denote the time step, i.e.,~$g^n = g(t^n)$. In order to evaluate $g^n$, we define $g_\ell^{n,i}$ in a lagged sense, i.e., based on the previous iterate. By inserting \eqref{eq:14} into \eqref{eq:13}, we get 
    \begin{subequations} \label{eq:15}
        \begin{alignat}{2}
            g_1^{n,i} & \coloneqq  \alpha \mathbf{q}_2^{n,i-1} \cdot \mathbf{n}_2 
            + (\alpha \mathbf{q}_2^{n,i-1} \cdot \mathbf{n}_2 + g_2^{n,i-1}) 
            &&= 2\alpha \mathbf{q}_2^{n,i-1} \cdot \mathbf{n}_2 + g_2^{n,i-1}, \label{eq:15a}\\
            g_2^{n,i} & \coloneqq  \alpha \mathbf{q}_1^{n,i-1} \cdot \mathbf{n}_1 
            + (\alpha \mathbf{q}_1^{n,i-1} \cdot \mathbf{n}_1 + g_1^{n,i-1}) 
            &&= 2\alpha \mathbf{q}_1^{n,i-1} \cdot \mathbf{n}_1 + g_1^{n,i-1}. \label{eq:15b}
        \end{alignat}
    \end{subequations}
    We emphasize that this update does not depend on the interface trace of the pressure. We finish this subsection with the function space for the interface variables, given by
    \begin{align}
        G &\coloneqq [L^2(\Gamma)]^2, &
        \langle g, \tilde g \rangle_G &\coloneqq \sum_{\ell=1}^2 \langle g_\ell, \tilde g_\ell \rangle_\Gamma, &
        \lVert g \rVert_G^2 &\coloneqq \sum_{\ell=1}^2 \lVert g_\ell \rVert_\Gamma^2.
    \end{align}
    Note that $G$ is the normal trace space of $Q$. 

\subsection{The iterative scheme}

    The final scheme is now obtained by using \eqref{eq:15} to replace the interface conditions \eqref{eq:3c} and \eqref{eq:3d}. Moreover, we use \eqref{eq:14} to substitute for $p_\ell^{n,i}$ in the interface term in \eqref{eq:10a}. This yields the following system,
    \begin{alignat*}{2}
        \langle \bar{k}_\ell^{n,i-1} \mathbf{q}_\ell^{n,i}, \tilde{\mathbf{q}}_\ell \rangle_{\Omega_\ell}
        - \langle p_\ell^{n,i}, \nabla \cdot \tilde{\mathbf{q}}_\ell \rangle_{\Omega_\ell}
        + \langle \alpha \mathbf{q}_\ell^{n,i} \cdot \mathbf{n}_\ell 
        + g_\ell^{n,i}, \tilde{\mathbf{q}}_\ell \cdot \mathbf{n}_\ell \rangle_\Gamma 
        &= \langle \nabla \eta_\ell, \tilde{\mathbf{q}}_\ell \rangle_{\Omega_\ell}, \\
        \langle S_\ell^{n,i-1} - S_\ell^{n-1}, \tilde{p}_\ell \rangle_{\Omega_\ell} 
        + L_\ell\langle p_\ell^{n,i} -p_\ell^{n,i-1}, \tilde{p}_\ell \rangle_{\Omega_\ell} 
        + \tau \langle \nabla \cdot \mathbf{q}_\ell^{n,i}, \tilde{p}_\ell \rangle_{\Omega_\ell} 
        &= \tau\langle \mathcal{F}_\ell, \tilde{p}_\ell \rangle_{\Omega_\ell}, \\
        \langle g_\ell^{n,i},\tilde{g}_\ell \rangle_\Gamma 
        &= \langle 2\alpha \mathbf{q}_\jay^{n,i-1} \cdot \mathbf{n}_\jay 
        + g_\jay^{n,i-1}, \tilde{g}_\ell \rangle_\Gamma,
    \end{alignat*}
    \text{for } $\ell,\jay \in \{1,2\}, \ell\neq \jay; \alpha\in(0,\infty)$.
    Moving all known terms to the right-hand side, we present the mLRDD-scheme, stated as the following problem:
    
    \begin{problem}[Iterative solution (mLRDD-scheme)] \label{FinalScheme}
    Given $\mathbf{q}^{n,i-1} \in Q$ and $g^{n,i - 1} \in G$, compute $g \in G$ such that
    \begin{subequations} \label{eq:final scheme}
        \begin{align}
            \langle g_\ell^{n,i},\tilde{g}_\ell \rangle_\Gamma
            &= \langle 2\alpha \mathbf{q}_\jay^{n,i-1} \cdot \mathbf{n}_\jay 
            + g_\jay^{n,i-1},\tilde{g}_\ell \rangle_\Gamma, &
            \ell, \jay &\in \{1, 2\}, \ \ell \ne \jay, \label{eq:final scheme c} 
        \end{align}
        for all $\tilde g \in G$. Then, given $p^{n,i-1} \in P$ and $S_\ell^{n - 1}$, find $\mathbf{q}^{n,i} \in Q$ and $p^{n,i} \in P$ such that
        \begin{alignat}{2}
            \langle \bar{k}_\ell^{n,i-1}\mathbf{q}_\ell^{n,i}, \tilde{\mathbf{q}}_\ell \rangle_{\Omega_\ell} 
            - \langle p_\ell^{n,i}, \nabla \cdot \tilde{\mathbf{q}}_\ell \rangle_{\Omega_\ell}  
            + \alpha \langle \mathbf{q}_\ell^{n,i} \cdot \mathbf{n}_\ell, \tilde{\mathbf{q}}_\ell \cdot \mathbf{n}_\ell \rangle_\Gamma 
            &= \langle \nabla \eta_\ell, \tilde{\mathbf{q}}_\ell \rangle_{\Omega_\ell} 
            - \langle g_\ell^{n,i}, \tilde{\mathbf{q}}_\ell \cdot \mathbf{n}_\ell \rangle_\Gamma, \label{eq:final scheme a}\\
            L_\ell\langle p_\ell^{n,i}, \tilde{p}_\ell \rangle_{\Omega_\ell}  
            + \tau \langle \nabla \cdot \mathbf{q}_\ell^{n,i}, \tilde{p}_\ell \rangle_{\Omega_\ell} 
            &= \tau\langle \mathcal{F}_\ell, \tilde{p}_\ell \rangle_{\Omega_\ell} 
            - \langle S_\ell^{n,i-1} - S_\ell^{n-1}, \tilde{p}_\ell \rangle_{\Omega_\ell} \nonumber\\
            &\quad + L_\ell\langle p_\ell^{n,i-1}, \tilde{p}_\ell \rangle_{\Omega_\ell} , \label{eq:final scheme b} 
        \end{alignat}
    \end{subequations}
    for all $\tilde{\mathbf{q}} \in Q$ and $\tilde{p} \in P$.
    \end{problem}

    We emphasize that Problem~\ref{FinalScheme} is a standard mixed formulation of Richards' equation with a storativity term. The bounds in Assumption~\ref{AssumptionOne} ensure that this problem is well-posed for each $i$. With the interest of convergence, we formally consider the limit problem as follows.

    \begin{problem}[Limit solution] \label{Limit problem}
    Given $S_\ell^{n-1}$, find $\mathbf{q}^n \in Q$, $p^n \in P$, and $g^n \in G$ such that
    \begin{subequations} \label{eq:limit solution}
        \begin{alignat}{1}
            \langle \bar{k}_\ell^{n}\mathbf{q}_\ell^{n}, \tilde{\mathbf{q}}_\ell \rangle_{\Omega_\ell}  
            - \langle p_\ell^{n}, \nabla \cdot \tilde{\mathbf{q}}_\ell \rangle_{\Omega_\ell}  
            + \alpha \langle \mathbf{q}_\ell^{n} \cdot \mathbf{n}_\ell, \tilde{\mathbf{q}}_\ell \cdot \mathbf{n}_\ell \rangle_\Gamma 
            &= \langle \nabla \eta_\ell, \tilde{\mathbf{q}}_\ell \rangle_{\Omega_\ell} 
            - \langle g_\ell^{n}, \tilde{\mathbf{q}}_\ell \cdot \mathbf{n}_\ell \rangle_\Gamma, \label{eq:limit solution a}\\
            \langle S_\ell^{n}, \tilde{p}_\ell \rangle_{\Omega_\ell}  
            + \tau \langle \nabla \cdot \mathbf{q}_\ell^{n}, \tilde{p}_\ell \rangle_{\Omega_\ell} 
            &= \tau\langle \mathcal{F}_\ell, \tilde{p}_\ell \rangle_{\Omega_\ell}  
            + \langle S_\ell^{n-1}, \tilde{p}_\ell \rangle_{\Omega_\ell} , \label{eq:limit solution b}\\
            \langle g_\ell^{n},\tilde{g}_\ell \rangle_\Gamma
            &= \langle  2\alpha \mathbf{q}_\jay^{n} \cdot \mathbf{n}_\jay 
            + g_\jay^{n}, \tilde{g}_\ell \rangle_\Gamma, \label{eq:limit solution c}
        \end{alignat}
    \end{subequations}
    with $\ell, \jay \in \{1, 2\}, \ \ell \ne \jay$, for all $\tilde{\mathbf{q}} \in Q$, $\tilde{p} \in P$, and $\tilde{g} \in G$.
    \end{problem}
    Note that \Cref{Limit problem} is equivalent to the weak form of the original system \eqref{eq:2}, under the assumption of sufficient regularity at the interfaces. Existence and uniqueness of a solution therefore follow from the well-posedness of the Richards flow problem \cite{alt1983quasilinear}. We end this section with a boundedness assumption on the limit solution.
\begin{assumption} \label{AssumptionTwo}
    The solution to Problem~\ref{Limit problem} satisfies $\sup_\ell \lVert\mathbf{q}_\ell^n \rVert_{L^\infty(\Omega_\ell)} \leq M_{\mathbf{q}_\ell} <+ \infty$. 
\end{assumption}

\section{Analysis of the convergence of the iterative scheme} \label{sec:analysis}
    In this section, we proceed to prove the convergence of the proposed Scheme \ref{FinalScheme} under Assumptions \ref{AssumptionOne} and \ref{AssumptionTwo}, with the appropriate parameter conditions, as presented in Theorem \ref{thm: convergence S q}.
    As the following proofs are carried out at a fixed time step $n$, we employ a slight abuse of notation by omitting the superscript $n$ from iteratively updated quantities. That is, 
    \begin{align*}
        p^{i}_\ell &\coloneqq p^{n,i}_\ell, & p^{i\pm1}_\ell &\coloneqq p^{n,i\pm1}_\ell,
    \end{align*}  
    and $\mathbf{q}^i_\ell$ and $g^i_\ell$ are defined analogously. The solution to the limit problem $p^n_\ell$, and the evaluation at the previous time step $p^{n-1}_\ell$, remain unchanged. 
    To further simplify notation, we define the errors at iteration $i$ and time step $n$ by
    \begin{equation}  \label{eq: errors}
        e_{p,\ell}^{i} \coloneqq p^{n}_\ell-p^{i}_\ell,
        \qquad e_{\mathbf{q},\ell}^{i}\coloneqq \mathbf{q}^{n}_\ell -\mathbf{q}^{i}_\ell,
        \qquad e_{g,\ell}^{i} \coloneqq g^{n}_\ell-g^{i}_\ell,
        \qquad e_{S,\ell}^{i} \coloneqq S^{n}_\ell-S^{i}_\ell.
    \end{equation}
    Similarly to the norm and inner product notations previously presented, the omission of the subscript $\ell$ on any variable, e.g., $e_p^i$, denotes the global quantity defined throughout the domain $\Omega$.
    
    \subsection{Subdomain variables}
    We start this section directly with our first main result, mainly the convergence of the flux and saturation. The pressure and interface variables are considered afterwards.
    \begin{theorem}\label{thm: convergence S q}
        Let Assumptions \ref{AssumptionOne} and \ref{AssumptionTwo} hold. If the linearization, interface, and time step parameters satisfy
        \begin{enumerate}[label=(\alph*)]
            \item $L_\ell > \frac12 L_{S_\ell}$,
            \item $\alpha > 0$,
            \item $\tau < (\frac{1}{L_{S_\ell}}- \frac{1}{2L_\ell})\frac{2 m_{\bar{k}_\ell}}{L_{\bar{k}_\ell}^2M_{\mathbf{q}_\ell}^2}$,
        \end{enumerate}
        for each $\ell \in \{1,2\}$, then the flux $\mathbf{q}^{i}$ and saturation $S^{i}$ converge strongly to $\mathbf{q}^{n}$ and $S^{n}$, respectively, in $L^2(\Omega)$. Specifically, 
        \begin{equation} \label{Thm1Conclusion}
            \lim_{i \to \infty} \lVert e_{\mathbf{q}}^i \rVert = 0 \qquad \textit{and} \qquad \lim_{i \to \infty} \lVert e_{S}^{i} \rVert = 0.
        \end{equation}\
    \end{theorem}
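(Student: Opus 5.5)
Our approach is the standard energy argument for L-schemes combined with the Robin-interface estimate of \cite{seus2018linear}. First, we subtract \eqref{eq:final scheme} from \eqref{eq:limit solution} to get error equations on each $\Omega_\ell$. The Darcy error equation reads
\[
\langle \bar k_\ell^{n} \mathbf q^n_\ell - \bar k_\ell^{i-1}\mathbf q_\ell^i,\tilde{\mathbf q}_\ell\rangle_{\Omega_\ell} - \langle e^i_{p,\ell},\nabla\cdot\tilde{\mathbf q}_\ell\rangle_{\Omega_\ell} + \alpha\langle e^i_{\mathbf q,\ell}\cdot\mathbf n_\ell,\tilde{\mathbf q}_\ell\cdot\mathbf n_\ell\rangle_\Gamma = -\langle e^i_{g,\ell},\tilde{\mathbf q}_\ell\cdot\mathbf n_\ell\rangle_\Gamma .
\]
The mass error equation reads
\[
\langle e^{i-1}_{S,\ell},\tilde p_\ell\rangle_{\Omega_\ell} + L_\ell\langle e^i_{p,\ell}-e^{i-1}_{p,\ell},\tilde p_\ell\rangle_{\Omega_\ell} + \tau\langle\nabla\cdot e^i_{\mathbf q,\ell},\tilde p_\ell\rangle_{\Omega_\ell}=0,
\]
and the interface update gives $e^i_{g,\ell}=2\alpha e^{i-1}_{\mathbf q,\jay}\cdot\mathbf n_\jay + e^{i-1}_{g,\jay}$. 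We then test the Darcy error equation with $\tau e^i_{\mathbf q,\ell}$ and the mass error equation with $e^i_{p,\ell}$, and add them. The divergence/pressure coupling cancels. In the Darcy nonlinearity we split
\[
\bar k^n_\ell\mathbf q^n_\ell-\bar k^{i-1}_\ell\mathbf q^i_\ell=\bar k^{i-1}_\ell e^i_{\mathbf q,\ell}+(\bar k^n_\ell-\bar k^{i-1}_\ell)\mathbf q^n_\ell .
\]
The first part gives coercivity $\tau m_{\bar k_\ell}\|e^i_{\mathbf q,\ell}\|^2$. The second part is bounded by $\tau L_{\bar k_\ell}M_{\mathbf q_\ell}\|e^{i-1}_{S,\ell}\|\,\|e^i_{\mathbf q,\ell}\|$, using Assumption~\ref{AssumptionTwo} and the Lipschitz continuity of $k^{-1}_\ell$ as a function of $S$. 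Young's inequality then gives $\tfrac{\tau m_{\bar k_\ell}}{2}\|e^i_{\mathbf q,\ell}\|^2+\tfrac{\tau L_{\bar k_\ell}^2M_{\mathbf q_\ell}^2}{2m_{\bar k_\ell}}\|e^{i-1}_{S,\ell}\|^2$. For the pressure terms we use the polarization identity
\[
2\langle e^i_p - e^{i-1}_p, e^i_p\rangle=\|e^i_p\|^2-\|e^{i-1}_p\|^2+\|e^i_p-e^{i-1}_p\|^2 .
\]
We write $\langle e^{i-1}_S,e^i_p\rangle=\langle e^{i-1}_S,e^{i-1}_p\rangle+\langle e^{i-1}_S,e^i_p-e^{i-1}_p\rangle$. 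Monotonicity and Lipschitz continuity of $S_\ell$ give $\langle e^{i-1}_S,e^{i-1}_p\rangle\ge \tfrac1{L_{S_\ell}}\|e^{i-1}_S\|^2$. The cross term is absorbed by Young's inequality against $\tfrac{L_\ell}{2}\|e^i_p-e^{i-1}_p\|^2$, at the cost of $\tfrac1{2L_\ell}\|e^{i-1}_S\|^2$. This is exactly where conditions (a) and (c) arise: the net coefficient of $\|e^{i-1}_{S,\ell}\|^2$ is
\[
\tfrac1{L_{S_\ell}}-\tfrac1{2L_\ell}-\tfrac{\tau L^2_{\bar k_\ell}M^2_{\mathbf q_\ell}}{2m_{\bar k_\ell}}>0 .
\]

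The interface terms are the main obstacle, and we handle them with the classical Robin-DD identity. After multiplying by $\tau$, the interface contribution is $\tau\alpha\|e^i_{\mathbf q,\ell}\cdot\mathbf n_\ell\|_\Gamma^2+\tau\langle e^i_{g,\ell},e^i_{\mathbf q,\ell}\cdot\mathbf n_\ell\rangle_\Gamma$. We write $e^i_{g,\ell}+2\alpha e^i_{\mathbf q,\ell}\cdot\mathbf n_\ell$ (the quantity sent to the neighbour at the next iteration) and use
\[
4\alpha\langle e_g,e_{\mathbf q}\cdot\mathbf n\rangle+4\alpha^2\|e_{\mathbf q}\cdot\mathbf n\|^2=\|e_g+2\alpha e_{\mathbf q}\cdot\mathbf n\|^2-\|e_g\|^2 .
\]
This turns the interface contribution into $\tfrac{\tau}{4\alpha}\bigl(\|e^{i+1}_{g,\jay}\|^2_\Gamma-\|e^i_{g,\ell}\|^2_\Gamma\bigr)$, a telescoping quantity once we sum over $\ell$ and use the update relation $e^{i+1}_{g,\jay}=2\alpha e^i_{\mathbf q,\ell}\cdot\mathbf n_\ell+e^i_{g,\ell}$. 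Summing over $\ell\in\{1,2\}$ gives $\tfrac{\tau}{4\alpha}(\|e^{i+1}_g\|^2_G-\|e^i_g\|^2_G)$. We check that the $\alpha\|\cdot\|_\Gamma^2$ term is exactly absorbed; only $\alpha>0$ is needed.

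Collecting terms, with $C_\ell>0$ the net coefficient above, we obtain
\[
\tfrac{L}{2}\|e^i_p\|^2+\tfrac{\tau}{4\alpha}\|e^{i+1}_g\|^2_G+\tfrac{\tau m_{\bar k}}{2}\|e^i_{\mathbf q}\|^2+C\|e^{i-1}_S\|^2\le \tfrac{L}{2}\|e^{i-1}_p\|^2+\tfrac{\tau}{4\alpha}\|e^i_g\|^2_G
\]
in the weighted notation \eqref{Notation1}. Summing from $i=1$ to $M$ telescopes the right-hand side. The series $\sum_i\bigl(\|e^i_{\mathbf q}\|^2+\|e^{i-1}_S\|^2\bigr)$ is therefore bounded by the initial errors, which are finite because $p^{n,0}\in P$ and $g^{n,0}\in G$. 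Hence the summands tend to zero, which gives \eqref{Thm1Conclusion}. Pressure and interface convergence are not claimed here; they follow separately from inf-sup arguments.
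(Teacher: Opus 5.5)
Your proposal is correct and follows the paper's proof essentially step for step. It uses the same error equations tested with $\tau e^i_{\mathbf q}$ and $e^i_p$, and the same splitting $\bar k^{i-1}e^i_{\mathbf q}+(\bar k^n-\bar k^{i-1})\mathbf q^n$ with the same Young parameter (leaving $\tfrac{\tau m_{\bar k}}{2}\lVert e^i_{\mathbf q}\rVert^2$ at the cost of $\tfrac{\tau L_{\bar k}^2M_{\mathbf q}^2}{2m_{\bar k}}\lVert e^{i-1}_S\rVert^2$). It also shares the monotonicity/Young treatment of the saturation terms, the Robin identity giving $\tfrac{\tau}{4\alpha}\bigl(\lVert e^{i+1}_g\rVert_G^2-\lVert e^i_g\rVert_G^2\bigr)$, and the telescoping conclusion. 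One minor correction: the telescoped bound is $\tfrac{L}{2}\lVert e^0_p\rVert^2+\tfrac{\tau}{4\alpha}\lVert e^1_g\rVert_G^2$, so its finiteness also uses $\mathbf q^{n,0}\in Q$ (normal trace in $L^2(\Gamma)$), not only $g^{n,0}\in G$.
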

    \begin{proof}[Proof.]
    Following the initial approach in the proof of \cite{seus2018linear}, we begin by considering the error equations. We subtract the iterative scheme \eqref{eq:final scheme} from the limit problem \eqref{eq:limit solution}, and use the errors defined from \eqref{eq: errors}
    %
    \begin{subequations}
    \begin{alignat}{1}
        \langle \bar{k}_\ell^{n}\mathbf{q}_\ell^{n} - \bar{k}_\ell^{i-1}\mathbf{q}_\ell^{i}, \tilde{\mathbf{q}}_\ell \rangle_{\Omega_\ell} 
        - \langle e_{p_\ell}^{i}, \nabla \cdot \tilde{\mathbf{q}}_\ell \rangle_{\Omega_\ell} 
        + \alpha \langle e_{\mathbf{q},\ell}^{i} \cdot \mathbf{n}_\ell, \tilde{\mathbf{q}}_\ell \cdot \mathbf{n}_\ell \rangle_\Gamma
        + \langle e_{g,\ell}^{i}, \tilde{\mathbf{q}}_\ell \cdot \mathbf{n}_\ell \rangle_\Gamma
        = 0, \label{eq:beforesumming} \\    
        \tau \langle \nabla \cdot e_{\mathbf{q},\ell}^{i}, \tilde{p}_\ell \rangle_{\Omega_\ell}
        + L_\ell\langle p_\ell^{i-1} - p_\ell^{i}, \tilde{p}_\ell \rangle_{\Omega_\ell}
        + \langle e_{S,\ell}^{i}, \tilde{p}_\ell \rangle_{\Omega_\ell}
        = 0.
    \end{alignat}
    \end{subequations}
    Next, we sum over all subdomains and employ the shorthand notation from \eqref{Notation1}
    \begin{subequations} \label{eq:22}
        \begin{alignat}{1}
            \langle \bar{k}^{n}\mathbf{q}^{n} - \bar{k}^{i-1}\mathbf{q}^{i}, \tilde{\mathbf{q}} \rangle 
            - \langle e_{p}^{i}, \nabla \cdot \tilde{\mathbf{q}} \rangle 
            + \alpha \langle e_{\mathbf{q}}^{i} \cdot \mathbf{n}, \tilde{\mathbf{q}} \cdot \mathbf{n} \rangle_G
            + \langle e_{g}^{i}, \tilde{\mathbf{q}} \cdot \mathbf{n} \rangle_G
            = 0, \label{eq:22a} \\    
            \tau \langle \nabla \cdot e_{\mathbf{q}}^{i}, \tilde{p} \rangle
            + L \langle (p^{i-1} - p^{i}), \tilde{p} \rangle
            + \langle e_{S}^{i}, \tilde{p} \rangle
            = 0. \label{eq:22b}
        \end{alignat}
    \end{subequations}
    We emphasize that the omission of the subscript $\ell$, as described in \eqref{Notation1}, includes subdomain-dependent constants. Add and subtract $\langle \bar{k}^{i-1}\mathbf{q}^{n}, \tilde{\mathbf{q}} \rangle$ and $L\langle p^{n}, \tilde{p} \rangle$ to the first and second equation, respectively, to obtain 
    \begin{alignat*}{1}
        \langle (\bar{k}^{n} - \bar{k}^{i-1}) \mathbf{q}^{n}, \tilde{\mathbf{q}} \rangle
        + \langle \bar{k}^{i-1} e_{\mathbf{q}}^i, \tilde{\mathbf{q}} \rangle
        - \langle e_{p}^{i}, \nabla \cdot \tilde{\mathbf{q}} \rangle 
        + \alpha \langle e_{\mathbf{q}}^{i} \cdot \mathbf{n}, \tilde{\mathbf{q}} \cdot \mathbf{n} \rangle_G
        + \langle e_{g}^{i}, \tilde{\mathbf{q}} \cdot \mathbf{n} \rangle_G 
        = 0, \\
        L\langle e_{p}^{i}, \tilde{p} \rangle 
        - L\langle e_{p}^{i-1}, \tilde{p}\rangle
        + \tau \langle \nabla \cdot e_{\mathbf{q}}^{i}, \tilde{p} \rangle
        +\langle e^{i-1}_{S} , \tilde{p} \rangle
        = 0.
    \end{alignat*}
    We now test the above equations with $\tilde{\mathbf{q}}=\tau e_{\mathbf{q}}^{i}$ and $\tilde{p} = e_{p}^{i}$, respectively, yielding
    \begin{alignat*}{1}
        \tau \langle (\bar{k}^{n} - \bar{k}^{i-1}) \mathbf{q}^{n},  e_{\mathbf{q}}^{i} \rangle
         - \tau \langle \bar{k}^{i-1} e_{\mathbf{q}}^i,  e_{\mathbf{q}}^{i} \rangle
        - \tau \langle e_{p}^{i}, \nabla \cdot e_{\mathbf{q}}^{i} \rangle 
        + \tau \alpha \langle e_{\mathbf{q}}^{i} \cdot \mathbf{n}, e_{\mathbf{q}}^{i} \cdot \mathbf{n} \rangle_G
         + \tau \langle e_{g}^{i}, e_{\mathbf{q}}^{i} \cdot \mathbf{n} \rangle_G 
        = 0, \\
        L\langle e_{p}^{i}, e_{p}^{i} \rangle 
        -L\langle e_{p}^{i-1}, e_{p}^{i} \rangle 
        + \tau \langle \nabla \cdot e_{\mathbf{q}}^{i}, e_{p}^{i} \rangle
        + \langle e^{i-1}_{S} , e_{p}^{i} \rangle
        = 0.
    \end{alignat*}
    Further, we add and subtract $\langle e^{i-1}_{S}, e_{p}^{i-1} \rangle$ to the last equation above to obtain
    \begin{alignat*}{1}
      \tau \langle (\bar{k}^{n} - \bar{k}^{i-1}) \mathbf{q}^{n}, e_{\mathbf{q}}^{i} \rangle
      + \tau \langle \bar{k}^{i-1} e_{\mathbf{q}}^{i}, e_{\mathbf{q}}^{i} \rangle
      - \tau \langle e_{p}^{i}, \nabla \cdot e_{\mathbf{q}}^{i} \rangle 
      + \tau \alpha \langle e_{\mathbf{q}}^{i} \cdot \mathbf{n}, e_{\mathbf{q}}^{i} \cdot \mathbf{n} \rangle_G
      + \tau \langle e_{g}^{i}, e_{\mathbf{q}}^{i} \cdot \mathbf{n} \rangle_G
      = 0, \\
      L \langle e_{p}^{i}, e_{p}^{i} \rangle
      - L \langle e_{p}^{i-1}, e_{p}^{i} \rangle
      + \tau \langle \nabla \cdot e_{\mathbf{q}}^{i}, e_{p}^{i} \rangle
      + \langle e^{i-1}_{S}, e_{p}^{i} - e_{p}^{i-1} \rangle
      + \langle e^{i-1}_{S}, e_{p}^{i-1} \rangle
      = 0.
      \end{alignat*}
    Now, adding together the mass balance and Darcy flow error equation, the divergence terms cancel, giving
    \begin{equation*} 
        \begin{aligned}
            \tau \langle (\bar{k}^{n} &- \bar{k}^{i-1})\mathbf{q}^{n}, e_{\mathbf{q}}^{i} \rangle 
            + \tau \langle \bar{k}^{i-1} e_{\mathbf{q}}^{i}, e_{\mathbf{q}}^{i} \rangle
            + L \langle e_{p}^{i} - e_{p}^{i-1}, e_{p}^{i} \rangle
            + \langle e^{i-1}_{S} , e_{p}^{i} - e_{p}^{i-1} \rangle \\
            &+ \langle e^{i-1}_{S} , e_{p}^{i-1} \rangle 
            + \tau \alpha \langle e_{\mathbf{q}}^{i} \cdot \mathbf{n}, e_{\mathbf{q}}^{i} \cdot \mathbf{n} \rangle_G 
            +\tau \langle e_{g}^{i}, e_{\mathbf{q}}^{i} \cdot \mathbf{n} \rangle_G
            = 0.
        \end{aligned}
    \end{equation*}
    After rearranging the above terms, we arrive at 
    \begin{equation}
        \begin{aligned} \label{eq:final error equation}
        \overset{\mathrm{I}}{\left[\tau \langle \bar{k}^{i-1} e_{\mathbf{q}}^{i}, e_{\mathbf{q}}^{i} \rangle \right]}
        &+ \overset{\mathrm{II}}{\left[L \langle e_{p}^{i} - e_{p}^{i-1}, e_{p}^{i} \rangle\right]}
        + \overset{\mathrm{III}}{\left[\langle e^{i-1}_{S} , e_{p}^{i-1} \rangle \right]}
        + \overset{\mathrm{IV}}{\left[\tau \alpha \langle e_{\mathbf{q}}^{i} \cdot \mathbf{n}, e_{\mathbf{q}}^{i} \cdot \mathbf{n} \rangle_G
        +\tau \langle e_{g}^{i}, e_{\mathbf{q}}^{i} \cdot \mathbf{n} \rangle_G \right]} \\
        &= \overset{\mathrm{V}}{\left[- \tau \langle (\bar{k}^{n} - \bar{k}^{i-1})\mathbf{q}^{n}, e_{\mathbf{q}}^{i} \rangle \right]}
        +\overset{\mathrm{VI}}{\left[- \langle e^{i-1}_{S} , e_{p}^{i} - e_{p}^{i-1} \rangle \right]}.
        \end{aligned}
    \end{equation}

    We now proceed by bounding each of the terms in \eqref{eq:final error equation}. First, we formulate lower bounds for the left-hand side terms, followed by upper bounds for the right-hand side.
    \begin{enumerate}[label=\Roman*.]
        \item Starting with the left-hand side, we use the lower bound of $\bar{k}$ from Assumption~\ref{AssumptionOne} to bound the first term by
        \begin{equation}
            \tau \langle \bar{k}^{i-1} e_{\mathbf{q}}^{i}, e_{\mathbf{q}}^{i} \rangle \geq \tau m_{\bar{k}} \lVert e_{\mathbf{q}}^{i} \rVert^2.
        \end{equation}
        \item Using the identity $(x-y) x = (x^2 - y^2 + (x-y)^2)/2$ on the second term, we have
        \begin{align}
            L \langle e_{p}^{i} - e_{p}^{i-1}, e_{p}^{i} \rangle
            =
            \frac{L}{2}&\left(\lVert e_{p}^{i} \rVert^2 
            - \lVert e_{p}^{i-1} \rVert^2
            + \lVert e_{p}^{i}- e_{p}^{i-1} \rVert^2\right).
        \end{align}
        \item The monotonicity and Lipschitz continuity of $S$, cf.~Assumption~\ref{AssumptionOne}, bounds the third term as
        \begin{equation*}
            \langle e^{i-1}_{S}, e_{p}^{i-1} \rangle 
            \geq \frac{1}{L_{S}}\lVert e^{i-1}_{S} \rVert^2.
        \end{equation*}
        \item For the two interface terms, we first derive an identity. By subtracting \eqref{eq:final scheme c} from \eqref{eq:limit solution c}, we arrive at
        \begin{equation*}
        \langle g_\ell^{n}- g_\ell^{i},\tilde{g}_\ell \rangle_\Gamma
        = \langle  2\alpha (\mathbf{q}_\jay^{n} \cdot \mathbf{n}_\jay
        - \mathbf{q}_\jay^{i-1} \cdot \mathbf{n}_\jay) 
        + g_\jay^{n} -g_\jay^{i-1}, \tilde{g}_\ell \rangle_\Gamma.
        \end{equation*}
        Using the defined errors in \eqref{eq: errors}, shifting by one iteration and considering the equation from the adjacent subdomain $\Omega_\jay$, we get
        \begin{equation} \label{eq:34}
            \langle e_{g,\jay}^{i+1},\tilde{g}_\jay \rangle_\Gamma
            = \langle  2\alpha \ e_{\mathbf{q},\ell}^{i} \cdot \mathbf{n}_\ell 
            + e_{g,\ell}^{i}, \tilde{g}_\jay \rangle_\Gamma.
        \end{equation}
        As the above holds for all $\tilde{g} \in G$, we apply the polarization identity $\lVert x+y \rVert^2 = \lVert x \rVert^2 + \lVert y \rVert^2 + 2\langle x,y\rangle$ to derive
        \begin{equation*}
            \lVert e_{g,\jay}^{i+1} \rVert^2_\Gamma
            = \lVert 2\alpha \ e_{\mathbf{q},\ell}^{i} \cdot \mathbf{n}_\ell 
            + e_{g,\ell}^{i} \rVert^2_\Gamma
            = 4\alpha^2 \lVert e_{\mathbf{q},\ell}^{i} \cdot \mathbf{n}_\ell \rVert^2_\Gamma 
            + \lVert e_{g,\ell}^{i} \rVert^2_\Gamma 
            + 4\alpha\langle e_{g,\ell}^{i}, e_{\mathbf{q},\ell}^{i} \cdot \mathbf{n}_\ell\rangle_\Gamma.
        \end{equation*}
        Rearranging and summing over the two subdomains, we obtain
        \begin{equation*}
            \lVert e_{\mathbf{q}}^{i} \cdot \mathbf{n} \rVert^2_G 
            = \frac{1}{4\alpha^2}
            \left(\lVert e_{g}^{i+1} \rVert^2_G 
            - \lVert e_{g}^{i} \rVert^2_G
            - 4\alpha\langle e_{g}^{i}, e_{\mathbf{q}}^{i} \cdot \mathbf{n}\rangle_G \right).
        \end{equation*}
        The above identity is then applied to the last left-hand side's fourth term, to arrive at 
        \begin{equation}
            \begin{aligned}
                \tau \alpha \langle e_{\mathbf{q}}^{i} \cdot \mathbf{n}, e_{\mathbf{q}}^{i} \cdot \mathbf{n} \rangle_G 
                + \tau \langle e_{g}^{i}, e_{\mathbf{q}}^{i} \cdot \mathbf{n} \rangle_G
                &= \tau \alpha \lVert e_{\mathbf{q}}^{i} \cdot \mathbf{n} \rVert^2_G
                +\tau \langle e_{g}^{i}, e_{\mathbf{q}}^{i} \cdot \mathbf{n} \rangle_G \\
                &= \frac{\tau}{4\alpha} \left(\lVert e_{g}^{i+1} \rVert^2_G
                - \lVert e_{g}^{i} \rVert^2_G  
                - 4\alpha\langle e_{g}^{i}, e_{\mathbf{q}}^{i} \cdot \mathbf{n} \rangle_G \right)
                +\tau \langle e_{g}^{i}, e_{\mathbf{q}}^{i} \cdot \mathbf{n} \rangle_G \\
                &= \frac{\tau}{4\alpha} (\lVert e_{g}^{i+1} \rVert^2_G
                - \lVert e_{g}^{i} \rVert^2_G ).
            \end{aligned}
        \end{equation}
        \item \label{bound V} Next, we consider the right-hand side. The first term is bounded by using Cauchy-Schwarz, the Lipschitz continuity of $\bar{k}$ from Assumption~\ref{AssumptionOne}, the boundedness of $\mathbf{q}^n$ from Assumption~\ref{AssumptionTwo}, and Young's inequality, yielding
        \begin{align*} 
            -\tau \langle (\bar{k}^{n}-\bar{k}^{i-1})\mathbf{q}^{n}, e_{\mathbf{q}}^{i} \rangle
            &\leq \tau \lVert (\bar{k}^{n}-\bar{k}^{i-1}) \mathbf{q}^{n} \rVert \lVert e_{\mathbf{q}}^{i} \rVert\\ 
            &\leq \tau L_{\bar{k}} \lVert S^{n}-S^{i-1} \rVert \lVert \mathbf{q}^{n} \rVert_{L^\infty(\Omega)} \lVert e_{\mathbf{q}}^{i} \rVert\\
            &\leq \tau L_{\bar{k}} M_{\mathbf{q}} \lVert e^{i-1}_{S} \rVert \lVert e_{\mathbf{q}}^{i} \rVert\\
            &\leq \tau L_{\bar{k}} M_{\mathbf{q}} \delta \lVert e^{i-1}_{S} \rVert^2 + \frac{\tau L_{\bar{k}}M_{\mathbf{q}}}{ 4\delta} \lVert e_{\mathbf{q}}^{i} \rVert^2,
        \end{align*}
        with $\delta > 0$ specified later.
        \item Finally, we use the Cauchy-Schwarz and Young's inequalities to bound the last right-hand side term by
    \begin{equation*}
        - \langle e^{i-1}_{S} , e_{p}^{i} - e_{p}^{i-1} \rangle
        \leq \frac{1}{2L} \lVert e^{i-1}_{S} \rVert^2 + \frac{L}{2}\lVert e_{p}^{i} - e_{p}^{i-1} \rVert^2.
    \end{equation*}
\end{enumerate}
    With the estimates from I-VI, \eqref{eq:final error equation} becomes
    \begin{equation*}
        \begin{aligned}
            \tau m_{\bar{k}} \lVert e_{\mathbf{q}}^{i} \rVert^2
            &+ \frac{L}{2}\left(\lVert e_{p}^{i} \rVert^2 
            - \lVert e_{p}^{i-1} \rVert^2
            + \lVert e_{p}^{i}- e_{p}^{i-1} \rVert^2\right)
            + \frac{1}{L_{S}}\lVert e^{i-1}_{S} \rVert^2
            + \frac{\tau}{4\alpha} (\lVert e_{g}^{i+1} \rVert^2_G
            - \lVert e_{g}^{i} \rVert^2_G ) \\
            &\leq 
            \tau L_{\bar{k}} M_{\mathbf{q}} \delta \lVert e^{i-1}_{S} \rVert^2 + \frac{\tau L_{\bar{k}}M_{\mathbf{q}}}{ 4\delta} \lVert e_{\mathbf{q}}^{i} \rVert^2
            + \frac{1}{2L} \lVert e^{i-1}_{S} \rVert^2 + \frac{L}{2}\lVert e_{p}^{i} - e_{p}^{i-1} \rVert^2.
        \end{aligned}
    \end{equation*}
    Moving all the terms to the left-hand side and simplifying the expression, we arrive at
    \begin{equation*}
        \begin{aligned}
            \tau(m_{\bar{k}} &- \frac{L_{\bar{k}}M_{\mathbf{q}}}{ 4\delta}) \lVert e_{\mathbf{q}}^{i} \rVert^2
            + \frac{L}{2}(\lVert e_{p}^{i} \rVert^2
            - \lVert e_{p}^{i-1} \rVert^2)
            + (\frac{1}{L_{S}} - \frac{1}{2L}-\tau L_{\bar{k}} M_{\mathbf{q}} \delta ) \lVert e^{i-1}_{S} \rVert^2
            + \frac{\tau}{4\alpha} ( \lVert e_{g}^{i+1} \rVert^2_G
            - \lVert e_{g}^{i} \rVert^2_G )
            \leq 0.
        \end{aligned}
    \end{equation*}
    Next, we choose the parameter of Young's inequality from term \hyperref[bound V]{V} to be $\delta = \frac{L_{\bar{k}} M_{\mathbf{q}}}{2 m_{\bar{k}}}$. Using the assumption on $L > \frac{L_S}{2}$ and $\tau < \left(\frac{1}{L_{S}}- \frac{1}{2L}\right)\frac{2 m_{\bar{k}}}{L_{\bar{k}}^2M_{\mathbf{q}}^2}$, we obtain
    \begin{align}
        C_1 &\coloneqq \tau \left(m_{\bar{k}} - \frac{L_{\bar{k}}M_{\mathbf{q}}}{ 4\delta}\right) = \tau\frac{m_{\bar{k}}}{2} > 0, \\
        C_2 &\coloneqq \left(\frac{1}{L_{S}}-\frac{1}{2L}\right)-\tau L_{\bar{k}} M_{\mathbf{q}} \delta = \left(\frac{1}{L_{S}}-\frac{1}{2L}\right)-\tau \frac{L_{\bar{k}}^2 M_{\mathbf{q}}^2}{2 m_{\bar{k}}}> 0.
    \end{align}
    With these definitions, the inequality simplifies. By summing over all iteration indices $i=1,...,I$, we have
    \begin{equation*}
        \begin{aligned}
            \sum_{i = 1}^{I} C_1 \lVert e_{\mathbf{q}}^{i} \rVert^2
            + \sum_{i = 1}^{I}\frac{L}{2}( \lVert e_{p}^{i} \rVert^2
            - \lVert e_{p}^{i-1} \rVert^2)
            + \sum_{i = 1}^{I}C_2 \lVert e^{i-1}_{S} \rVert^2
            + \sum_{i = 1}^{I}\frac{\tau}{4\alpha} ( \lVert e_{g}^{i+1} \rVert^2_G
            - \lVert e_{g}^{i} \rVert^2_G ) \leq 0.
        \end{aligned}
    \end{equation*} 
    From the telescoping property of the above inequality, we get
    \begin{equation} \label{eq:40}
        \begin{aligned}
            \sum_{i = 1}^{I} C_1 \lVert e_{\mathbf{q}}^{i} \rVert^2
            + \frac{L}{2}( \lVert e_{p}^{I} \rVert^2
            - \lVert e_{p}^{0} \rVert^2) 
            + \sum_{i = 1}^{I}C_2 \lVert e^{i-1}_{S} \rVert^2 
            + \frac{\tau}{4\alpha} ( \lVert e_{g}^{I+1} \rVert^2_G
            - \lVert e_{g}^{1} \rVert^2_G )
            \leq 0.
        \end{aligned}
    \end{equation} 
    A rearrangement now yields
    \begin{equation} \label{eq:41}
        \sum_{i = 1}^{I}  C_1 \lVert e_{\mathbf{q}}^{i} \rVert^2
        + \sum_{i = 1}^{I} C_2 \lVert e^{i-1}_{S} \rVert^2 
        + \frac{L}{2} \lVert e_{p}^{I} \rVert^2
        + \frac{\tau}{4\alpha} \lVert e_{g}^{I+1} \rVert^2_G
        \leq \frac{L}{2} \lVert e_{p}^{0} \rVert^2 + \frac{\tau}{4\alpha} \lVert e_{g}^{1} \rVert^2_G.
    \end{equation}
    The right hand side is independent of $I \in \mathbb{N} $, so as we let $I \to \infty$, we get the estimate
    \begin{alignat*}{1}
        \sum_{i = 1}^{\infty}  C_1 \lVert e_{\mathbf{q}}^{i} \rVert^2
        + \sum_{i = 1}^{\infty} C_2 \lVert e^{i-1}_{S} \rVert^2 
        \leq \frac{L}{2} \lVert e_{p}^{0} \rVert^2 + \frac{\tau}{4\alpha} \lVert e_{g}^{1} \rVert^2_G.
    \end{alignat*}
    This means that the infinite sum on the left-hand side is bounded by the finite errors at the first iterations, and we conclude \eqref{Thm1Conclusion}.
    \end{proof}
    
    Next, we turn our attention to the convergence of the pressure $p^{i}$ and the interface variable $g^{i}$. We start by deriving a weak convergence result using the following lemma.
    \begin{lemma}
        Let $X$ be a Banach space with dual $X^*$, let $\{x^i\}_{i} \subset X$, and let $x \in X$. Suppose every subsequence of $\{x^i\}_{i}$ has a further sub-subsequence converging weakly to $x$. Then
        \begin{equation*}
            x^i \rightharpoonup x.
        \end{equation*}
    \end{lemma}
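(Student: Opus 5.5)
We argue by contradiction, using only the definition of weak convergence in $X$: $x^i \rightharpoonup x$ means that $\langle f, x^i\rangle \to \langle f, x\rangle$ for every $f \in X^*$. The key observation is that this is convergence of a family of scalar sequences, one for each functional $f$. So the claim reduces to the elementary real-analysis fact that a scalar sequence converges to a limit $a$ whenever every subsequence has a further subsequence converging to $a$.

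Suppose $x^i \not\rightharpoonup x$. Then there is a functional $f \in X^*$ for which the scalar sequence $a_i \coloneqq \langle f, x^i - x\rangle$ does not tend to zero. Hence there exist $\varepsilon > 0$ and a subsequence $\{x^{i_k}\}_k$ with
\begin{equation*}
|\langle f, x^{i_k} - x\rangle| \geq \varepsilon \qquad \text{for all } k.
\end{equation*}
By hypothesis, this subsequence has a further sub-subsequence $\{x^{i_{k_m}}\}_m$ with $x^{i_{k_m}} \rightharpoonup x$. Testing with the same functional $f$ gives $\langle f, x^{i_{k_m}} - x\rangle \to 0$ as $m \to \infty$. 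But every term of this sequence has modulus at least $\varepsilon$, which is a contradiction. Therefore $x^i \rightharpoonup x$.

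There is no genuine obstacle in this argument. The only point requiring care is to extract the "bad" subsequence for one fixed functional $f$ before applying the hypothesis. The sub-subsequence provided by the hypothesis converges weakly, so it converges when tested against every functional, and in particular against that same $f$.

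In the paper, this lemma will serve as the bridge to the convergence of the remaining variables. From \eqref{eq:41}, $\{e_p^i\}$ is bounded in $P$ and $\{e_g^i\}$ is bounded in $G$, and both are Hilbert spaces. Every subsequence therefore has a weakly convergent sub-subsequence. One then identifies the limit as zero by passing to the limit in the error equations \eqref{eq:22}, using the strong convergence from Theorem~\ref{thm: convergence S q}. The lemma then upgrades this to weak convergence of the whole sequence.
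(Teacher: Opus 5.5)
Your proof is correct and follows the same argument as the paper. Negating weak convergence gives a fixed functional, an $\varepsilon>0$ and a subsequence staying $\varepsilon$-away from the limit; applying the hypothesis to that subsequence and testing its weakly convergent sub-subsequence against the same functional yields the contradiction.
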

        
    \begin{proof}
        We argue by contradiction. Suppose that $x^i \not\rightharpoonup x$. Then there exists a functional $\varphi \in X^*$, $\varepsilon_0 > 0$ and a subsequence $\{x^{i_k}\}_k$ of $\{x^i\}_{i}$ such that
        \begin{equation}\label{eq:subseq}
            \left|\varphi(x^{i_k}) - \varphi(x)\right| \geq \varepsilon_O \qquad \forall k \in \mathbb{N}.
        \end{equation}
        By hypothesis, $\{x^{i_k}\}_k$ has a further sub-subsequence
        $\{x^{i_{k_l}}\}_l$ with $x^{i_{k_l}} \rightharpoonup x$. In particular, $\varphi(x^{i_{k_l}}) \to \varphi(x)$,
        but this contradicts \eqref{eq:subseq}, since every term of $\{x^{i_{k_l}}\}$ is a term of $\{x^{i_k}\}$.
    \end{proof}
    
    \begin{lemma} \label{lem: weak convergence p and g}
        The pressure $p^{i}$ and the Robin-variable $g^{i}$ converge weakly
        \begin{align}
            \lim_{i \to \infty} \langle e_g^i, \tilde g \rangle_G &= 0, 
            &\forall \tilde g &\in G, \\
            \lim_{i \to \infty} \langle e_p^i, \tilde p \rangle &= 0, 
            &\forall \tilde p &\in P.
        \end{align}
    \end{lemma}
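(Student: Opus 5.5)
From the estimate \eqref{eq:41} in the proof of Theorem~\ref{thm: convergence S q}, the sequences $\{e_p^i\}$ and $\{e_g^i\}$ are bounded in $P$ and $G$, since the right-hand side there does not depend on $I$. Both are Hilbert spaces, so every subsequence has a further sub-subsequence converging weakly, say $e_p^{i_k}\rightharpoonup \bar e_p$ in $P$ and $e_g^{i_k}\rightharpoonup \bar e_g$ in $G$. By the preceding lemma it then suffices to show $\bar e_p=0$ and $\bar e_g=0$ for every such limit; the limit is unique, so the whole sequence converges weakly to zero.

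To identify the limits, I pass to the limit in the error equations \eqref{eq:22} along the sub-subsequence, working in this order.

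First, take the mass balance equation \eqref{eq:22b} with test function $\tilde p$. Its $\tilde p$-terms are $\tau\langle\nabla\cdot e_{\mathbf q}^{i},\tilde p\rangle + L\langle e_p^{i}-e_p^{i-1},\tilde p\rangle + \langle e_S^{i-1},\tilde p\rangle = 0$.
\begin{itemize}
\item The term $\langle e_S^{i-1},\tilde p\rangle$ tends to $0$ by Theorem~\ref{thm: convergence S q}.
\item The term $\langle e_p^i-e_p^{i-1},\tilde p\rangle$ tends to $0$ strongly. Keeping the term $\frac L2\lVert e_p^i-e_p^{i-1}\rVert^2$ dropped in the proof of the theorem (one only used half of it) shows it is summable; alternatively one can argue directly.
\end{itemize}
Hence $\langle \nabla\cdot e_{\mathbf q}^i,\tilde p\rangle\to 0$ for all $\tilde p\in P$.

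Second, take the Darcy equation \eqref{eq:22a} with arbitrary $\tilde{\mathbf q}\in Q$.
\begin{itemize}
\item The term $\langle(\bar k^n-\bar k^{i-1})\mathbf q^n,\tilde{\mathbf q}\rangle$ is bounded by $L_{\bar k}M_{\mathbf q}\lVert e_S^{i-1}\rVert\lVert\tilde{\mathbf q}\rVert\to0$.
\item The term $\langle\bar k^{i-1}e_{\mathbf q}^i,\tilde{\mathbf q}\rangle\to0$, since $\bar k$ is bounded and $e_{\mathbf q}^i\to 0$ strongly.
\end{itemize}
The interface term $\alpha\langle e_{\mathbf q}^i\cdot\mathbf n,\tilde{\mathbf q}\cdot\mathbf n\rangle_G$ is the delicate one, because Theorem~\ref{thm: convergence S q} gives only $L^2(\Omega)$ convergence of the flux, not of its normal trace. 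I handle it using the identity from step IV. Since $\lVert e_{\mathbf q}^i\cdot\mathbf n\rVert_G^2=\frac1{4\alpha^2}(\lVert e_g^{i+1}\rVert_G^2-\lVert e_g^i\rVert_G^2)-\frac1\alpha\langle e_g^i,e_{\mathbf q}^i\cdot\mathbf n\rangle_G$, and $e_g$ is bounded, the normal traces $e_{\mathbf q}^i\cdot\mathbf n$ are bounded in $G$. Extracting further, $e_{\mathbf q}^{i_k}\cdot\mathbf n\rightharpoonup \chi$ in $G$. Combined with $e_{\mathbf q}^i\to0$ in $L^2$ and the weak convergence of the divergence, the generalized Green formula gives $\chi=0$: test with smooth $\tilde p$ and pass to the limit in $\langle\nabla\cdot e_{\mathbf q},\tilde p\rangle+\langle e_{\mathbf q},\nabla\tilde p\rangle=\langle e_{\mathbf q}\cdot\mathbf n,\tilde p\rangle_\Gamma$.

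Passing to the limit then gives $-\langle\bar e_p,\nabla\cdot\tilde{\mathbf q}\rangle+\langle\bar e_g,\tilde{\mathbf q}\cdot\mathbf n\rangle_G=0$ for all $\tilde{\mathbf q}\in Q$.

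Third, the interface update \eqref{eq:34} in the limit gives $\bar e_{g,\jay}=\bar e_{g,\ell}$. This requires that $e_g^{i+1}$ and $e_g^i$ have the same weak limit. I would obtain this from \eqref{eq:34} itself, which says that, along the sub-subsequence, the shifted sequence $e_g^{i+1}$ is the swap of $e_g^i$ up to a weakly null term. A cleaner route might be to avoid the shift and work only with the system above.

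Finally, I recover $\bar e_p$ and $\bar e_g$.
\begin{itemize}
\item Choosing $\tilde{\mathbf q}$ compactly supported in $\Omega_\ell$ gives $\nabla\bar e_p=0$ distributionally, so $\bar e_p$ is piecewise constant.
\item Choosing $\tilde{\mathbf q}$ with normal trace on $\partial\Omega_D\cap\partial\Omega_\ell$ shows that this constant vanishes, using the assumption that each subdomain touches the Dirichlet boundary.
\item Then $\langle\bar e_g,\tilde{\mathbf q}\cdot\mathbf n\rangle_G=0$ for all $\tilde{\mathbf q}\in Q$. Because $G$ is the normal trace space of $Q$ (a surjectivity or density claim I would take from the paper's remark), this gives $\bar e_g=0$.
\end{itemize}

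The main obstacle I expect is the treatment of the interface terms, namely proving that the normal traces converge weakly to zero and justifying that trace surjectivity. If the shift argument in the third step fails, I would drop it, since the final step already determines $\bar e_g$ without it.
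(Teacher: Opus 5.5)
Your proposal is correct and follows the paper's overall structure. Boundedness comes from \eqref{eq:41}, weakly convergent sub-subsequences are extracted, every such limit is shown to be zero, and the sub-subsequence lemma concludes. The difference lies in how the limit is identified.

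The paper asserts that the weak limits produce a solution of Problem~\ref{Limit problem} and then invokes uniqueness. That shortcut silently requires passing to the limit in two places:
\begin{itemize}
\item in \eqref{eq:limit solution c}, whose iterative counterpart \eqref{eq:final scheme c} has shifted indices (exactly the difficulty you raise in your third step);
\item in the term $\alpha\langle e_{\mathbf q}^i\cdot\mathbf n,\tilde{\mathbf q}\cdot\mathbf n\rangle_G$, about which Theorem~\ref{thm: convergence S q} says nothing.
\end{itemize}
You instead use only the Darcy error equation \eqref{eq:22a}. You bound the normal traces via \eqref{eq:34} and show their weak limit vanishes by Green's formula. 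This gives $-\langle\bar e_p,\nabla\cdot\tilde{\mathbf q}\rangle+\langle\bar e_g,\tilde{\mathbf q}\cdot\mathbf n\rangle_G=0$ for all $\tilde{\mathbf q}\in Q$, and you then prove directly that this forces $\bar e_p=\bar e_g=0$. This is longer but self-contained. Since the coefficient terms in \eqref{eq:22a} vanish in the limit, the only uniqueness actually needed is this linear injectivity, and your argument supplies it. You are also right that the shifted third step is unnecessary.

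One justification is wrong and must be replaced. In the proof of Theorem~\ref{thm: convergence S q}, the term $\frac L2\lVert e_p^i-e_p^{i-1}\rVert^2$ from II is cancelled \emph{entirely} by the Young bound in VI, so nothing remains to sum. There are two fixes:
\begin{itemize}
\item Weight Young's inequality in VI by some $\epsilon<1$, which is admissible because conditions (a) and (c) are strict.
\item More simply, note that \eqref{eq:22b} with bounded $e_p^i$ gives $\lVert\nabla\cdot e_{\mathbf q}^i\rVert\lesssim\lVert e_p^i\rVert+\lVert e_p^{i-1}\rVert+\lVert e_S^{i-1}\rVert$. So the divergences are bounded, and any weak limit of $\nabla\cdot e_{\mathbf q}^{i_k}$ vanishes when tested against $C_c^\infty(\Omega_\ell)$, because $e_{\mathbf q}^i\to0$ in $L^2$. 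That is all your Green-formula step needs.
\end{itemize}

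Smaller points:
\begin{itemize}
\item The Green-formula test functions must vanish on $\partial\Omega_D\cap\partial\Omega_\ell$, where the normal trace of $e_{\mathbf q}^i$ need not be zero. Their traces are still dense in $L^2(\Gamma)$, so nothing is lost.
\item To kill the constant, take $\tilde{\mathbf q}$ from Lemma~\ref{lemma1} with $\tilde f=1$. This assumes each $\Omega_\ell$ is connected, as the paper implicitly does.
\item For trace surjectivity, use the extension $\mathcal R_\ell$ from the proof of Lemma~\ref{lemma4}.
\end{itemize}
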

    \begin{proof}
        From estimate \eqref{eq:41}, we extract the following error bound
        \begin{align}
            \frac{L}{2} \lVert e_{p}^{I} \rVert^2
            + \frac{\tau}{4\alpha} \lVert e_{g}^{I+1} \rVert^2_G 
            &\leq \frac{L}{2} \lVert e_{p}^{0} \rVert^2 + \frac{\tau}{4\alpha} \lVert e_{g}^{1} \rVert^2_G, &
            \forall I &\in \mathbb{N}.
    \end{align}
        Consequently, the sequences $\{e_{p}^i\}_{i \in \mathbb{N}}$ and $\{e_{g}^i\}_{i \in \mathbb{N}}$ are uniformly bounded in $P$ and $G$, respectively. To derive the results, we set our focus on the Robin interface variable. Since $\{e_{g}^i\}_{i \in \mathbb{N}}$ is bounded, every subsequence has a weakly convergent sub-subsequence \cite[D Thm.~3]{evans2022partial}. Let $e_g^*$ be such a weak limit. Passing to the limit, we see that $g^* \coloneqq g^n - e_g^*$ satisfies \eqref{eq:limit solution}. However, $g^n$ is the unique solution to \eqref{eq:limit solution} by Assumption~\ref{AssumptionTwo}, so $e_g^* = 0$. Thus, every subsequence has a sub-subsequence that converges weakly to zero, and \Cref{lem: weak convergence p and g} then implies that the entire sequence converges weakly to zero. The same argument applies for the sequence of pressure errors.
    \end{proof}

    \begin{rmk}
        Lemma~\ref{lem: weak convergence p and g} suffices in the discrete case because, in the finite-dimensional setting, weak convergence does imply strong convergence and all norms are equivalent. 
    \end{rmk}

    We now improve on this result by considering strong convergence of the pressure. For that, we first require the inf-sup condition given in the following lemma.
    \begin{lemma}\label{lemma1}
        Given that $\Omega$ is subdivided into two Lipschitz domains $\Omega_\ell$, then for any $\tilde{f} \in P$ there exists a $\tilde{\mathbf{q}} \in Q$ such that
        \begin{equation*} \label{eq:42}
        \nabla\cdot\tilde{\mathbf{q}}=\tilde{f} \quad and \quad \lVert \tilde{\mathbf{q}} \rVert\leq C_\Omega\lVert \tilde{f} \rVert, \quad with \quad \tilde{\mathbf{q}} \cdot \mathbf{n}|_\Gamma= 0,
        \end{equation*}
        where the constant $C_\Omega$ is independent of $\tilde{f}$.
    \end{lemma}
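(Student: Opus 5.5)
The plan is to construct $\tilde{\mathbf{q}}$ subdomain by subdomain, using the classical surjectivity of the divergence from $H(\mathrm{div})$ (in fact from $H^1$) with controlled boundary data. The constraint $\tilde{\mathbf{q}}\cdot\mathbf{n}|_\Gamma=0$ means the two pieces can be built independently. Write $\tilde f=(\tilde f_1,\tilde f_2)$ with $\tilde f_\ell\in L^2(\Omega_\ell)$. The only compatibility issue is that a field with zero normal flux on all of $\partial\Omega_\ell$ would force $\int_{\Omega_\ell}\tilde f_\ell=0$. This is avoided by the standing assumption that each $\partial\Omega_\ell$ meets the Dirichlet boundary $\partial\Omega_D$ on a set of positive measure, where no normal-flux constraint is imposed in $Q_\ell$.

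\textbf{Step 1: local auxiliary problem.} For each $\ell\in\{1,2\}$, let $\Sigma_\ell:=\partial\Omega_\ell\cap\partial\Omega_D$, which has positive $(d-1)$-measure. Solve the mixed Poisson problem
\begin{equation*}
-\Delta w_\ell=\tilde f_\ell \ \text{in } \Omega_\ell,\qquad w_\ell=0 \ \text{on } \Sigma_\ell,\qquad \nabla w_\ell\cdot\mathbf{n}_\ell=0 \ \text{on } \partial\Omega_\ell\setminus\Sigma_\ell ,
\end{equation*}
so the Neumann part includes $\Gamma$ and $\partial\Omega_N\cap\partial\Omega_\ell$. By Lax--Milgram on $H^1_{\Sigma_\ell}(\Omega_\ell)$, with Poincaré's inequality valid because $|\Sigma_\ell|>0$ and $\Omega_\ell$ is a connected Lipschitz domain, there is a unique weak solution with $\lVert\nabla w_\ell\rVert_{\Omega_\ell}\le C_{P,\ell}\lVert\tilde f_\ell\rVert_{\Omega_\ell}$.

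\textbf{Step 2: define the field and check the constraints.} Set $\tilde{\mathbf{q}}_\ell:=-\nabla w_\ell$. The weak formulation tested with $v\in C_c^\infty(\Omega_\ell)$ gives $\nabla\cdot\tilde{\mathbf{q}}_\ell=\tilde f_\ell\in L^2(\Omega_\ell)$, hence $\tilde{\mathbf{q}}_\ell\in H(\mathrm{div};\Omega_\ell)$. Testing with general $v\in H^1_{\Sigma_\ell}(\Omega_\ell)$ and applying the generalized Green formula shows $\langle\tilde{\mathbf{q}}_\ell\cdot\mathbf{n}_\ell,v\rangle=0$ for all such $v$. So the normal trace vanishes on $\partial\Omega_\ell\setminus\Sigma_\ell$ in the $H^{-1/2}$ sense restricted to that part, in particular on $\Gamma$ and on $\partial\Omega_N\cap\partial\Omega_\ell$.

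Then $(\tilde{\mathbf{q}}_\ell\cdot\mathbf{n}_\ell)|_\Gamma=0\in L^2(\Gamma)$, so $\tilde{\mathbf{q}}_\ell\in Q_\ell$, and $\tilde{\mathbf{q}}:=(\tilde{\mathbf{q}}_1,\tilde{\mathbf{q}}_2)\in Q$. Summing the local bounds gives $\lVert\tilde{\mathbf{q}}\rVert^2\le\max_\ell C_{P,\ell}^2\lVert\tilde f\rVert^2$. So $C_\Omega:=\max_\ell C_{P,\ell}$ depends only on the geometry and not on $\tilde f$.

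\textbf{Main obstacle.} The delicate point is making rigorous sense of the statement that the normal trace vanishes on the relatively open pieces $\Gamma$ and $\partial\Omega_N\cap\partial\Omega_\ell$. These meet $\Sigma_\ell$ along $(d-2)$-dimensional sets, where $H^{-1/2}$ localization is subtle. I would handle this through the space $H^{1/2}_{00}$: use that $H^1_{\Sigma_\ell}(\Omega_\ell)$ has trace space containing $H^{1/2}_{00}(\partial\Omega_\ell\setminus\Sigma_\ell)$, which is dense in $L^2$ there. Since the normal trace pairs to zero against this whole space, and the trace on $\Gamma$ is required (and here shown) to be the $L^2$ function $0$, the membership in $Q_\ell$ follows. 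A secondary point is connectedness of each $\Omega_\ell$ for Poincaré. If a subdomain had several components, I would apply the argument per component, each of which must touch $\partial\Omega_D$ for the statement to hold. I would note this as an implicit geometric assumption consistent with the paper's standing hypothesis.
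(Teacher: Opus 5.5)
Your proposal is correct and takes essentially the same route as the paper. The paper simply invokes Thomas's construction (an auxiliary Poisson-type problem whose gradient supplies a field with the prescribed divergence) on each $\Omega_\ell$ separately, with the contact $\partial\Omega_\ell\cap\partial\Omega_D\neq\emptyset$ playing exactly the role of your $\Sigma_\ell$ in removing the mean-zero compatibility condition. Your write-up is also more explicit than the paper's, since it states the positive-measure and connectedness requirements and the $H^{1/2}_{00}$ sense in which the normal trace vanishes on $\Gamma$.
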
    
    \begin{proof}
        See \cite{thomas1977sur} for a construction, which we apply to each $\Omega_\ell$. This is possible due to the assumption that $\partial\Omega_\ell \cap \partial \Omega_D \ne \emptyset$, cf.~\Cref{sec:problem}.
    \end{proof}
    \begin{theorem} \label{thm: convergence pressure}
        The pressure $p^{i}$ converges strongly to $p^{n}$ in $L^2(\Omega)$. That is,
        \begin{equation*}
            \lim_{i \to \infty} \lVert e_{p}^i \rVert = 0.
        \end{equation*}
    \end{theorem}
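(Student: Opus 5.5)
We will obtain strong convergence of the pressure from the Darcy error equation \eqref{eq:22a}. We test it with a flux $\tilde{\mathbf{q}}$ that vanishes on the interface and whose divergence is the pressure error; the needed flux is supplied by the inf-sup construction of Lemma~\ref{lemma1}. The interface terms then disappear, and $\lVert e_p^i\rVert$ is controlled by quantities that Theorem~\ref{thm: convergence S q} already shows tend to zero.

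Concretely, fix $i$ and apply Lemma~\ref{lemma1} with $\tilde f = e_p^i \in P$. This gives $\tilde{\mathbf{q}}\in Q$ with $\nabla\cdot\tilde{\mathbf{q}} = e_p^i$, $\tilde{\mathbf{q}}\cdot\mathbf{n}|_\Gamma=0$ and $\lVert\tilde{\mathbf{q}}\rVert\le C_\Omega\lVert e_p^i\rVert$. Inserting this into the rearranged Darcy error equation (after adding and subtracting $\langle \bar k^{i-1}\mathbf{q}^n,\tilde{\mathbf{q}}\rangle$) kills both $\Gamma$-terms $\alpha\langle e_{\mathbf{q}}^i\cdot\mathbf{n},\tilde{\mathbf{q}}\cdot\mathbf{n}\rangle_G$ and $\langle e_g^i,\tilde{\mathbf{q}}\cdot\mathbf{n}\rangle_G$. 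We are left with
\begin{equation*}
\lVert e_p^i\rVert^2 = \langle (\bar k^{n}-\bar k^{i-1})\mathbf{q}^n,\tilde{\mathbf{q}}\rangle + \langle \bar k^{i-1} e_{\mathbf{q}}^i,\tilde{\mathbf{q}}\rangle .
\end{equation*}
The first term is bounded as in step V of the proof of Theorem~\ref{thm: convergence S q}: by Cauchy--Schwarz, the Lipschitz continuity of $\bar k$ (Assumption~\ref{AssumptionOne}) and the $L^\infty$ bound of Assumption~\ref{AssumptionTwo}, it is at most $L_{\bar k}M_{\mathbf{q}}\lVert e_S^{i-1}\rVert\,\lVert\tilde{\mathbf{q}}\rVert$. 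The second term is at most $M_{\bar k}\lVert e_{\mathbf{q}}^i\rVert\,\lVert\tilde{\mathbf{q}}\rVert$. Using $\lVert\tilde{\mathbf{q}}\rVert\le C_\Omega\lVert e_p^i\rVert$ and dividing by $\lVert e_p^i\rVert$ (the case $e_p^i=0$ is trivial), we get
\begin{equation*}
\lVert e_p^i\rVert \le C_\Omega\bigl(L_{\bar k}M_{\mathbf{q}}\lVert e_S^{i-1}\rVert + M_{\bar k}\lVert e_{\mathbf{q}}^i\rVert\bigr),
\end{equation*}
where the constants are maxima over $\ell$. By \eqref{Thm1Conclusion} both terms on the right tend to zero, which proves the claim. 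The estimate \eqref{eq:41} even shows $\sum_i\lVert e_p^i\rVert^2<\infty$.

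The main obstacle is to justify the test function subdomain-wise. The construction must be done separately on each $\Omega_\ell$ with zero normal trace on $\Gamma$ and on $\partial\Omega_N\cap\partial\Omega_\ell$, so that $\tilde{\mathbf{q}}$ lies in $Q_\ell$ and its interface trace is trivially in $L^2(\Gamma)$. This is exactly what Lemma~\ref{lemma1} provides, using $\partial\Omega_\ell\cap\partial\Omega_D\neq\emptyset$, which leaves room for arbitrary divergence without a compatibility condition. One must also track the subdomain-dependent weights hidden in the shorthand \eqref{Notation1}. Since the Darcy equation carries no $\ell$-dependent factor other than $\bar k_\ell$, this only changes the constants.
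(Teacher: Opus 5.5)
Your proposal is correct and follows the paper's proof essentially step for step. Like the paper, you test the Darcy error equation \eqref{eq:22a} with the interface-free flux from Lemma~\ref{lemma1} for $\tilde f = e_p^i$, add and subtract $\langle \bar k^{i-1}\mathbf{q}^n,\tilde{\mathbf{q}}\rangle$, and obtain $\lVert e_p^i\rVert \le C_\Omega\bigl(L_{\bar k}M_{\mathbf{q}}\lVert e_S^{i-1}\rVert + M_{\bar k}\lVert e_{\mathbf{q}}^i\rVert\bigr)$ before invoking Theorem~\ref{thm: convergence S q}. Your summability remark is also valid, though strictly it needs this bound together with \eqref{eq:41}, not \eqref{eq:41} alone.
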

    \begin{proof}
        We bound the pressure error term using the inf-sup condition from the previous Lemma \ref{lemma1}. Starting from \eqref{eq:22a} with an arbitrary test function, we have
        \begin{equation*}
            \langle \bar{k}^{n}\mathbf{q}^{n} - \bar{k}^{i-1}\mathbf{q}^{i}, \tilde{\mathbf{q}} \rangle 
            - \langle e_{p}^{i}, \nabla \cdot \tilde{\mathbf{q}} \rangle 
            + \alpha \langle e_{\mathbf{q}}^{i} \cdot \mathbf{n}, \tilde{\mathbf{q}} \cdot \mathbf{n} \rangle_G
            + \langle e_{g}^{i}, \tilde{\mathbf{q}} \cdot \mathbf{n} \rangle_G
            = 0.
        \end{equation*}
        From Lemma \ref{lemma1}, by choosing $\tilde{f} = e_{p}^{i}$, there exists a $\tilde{\mathbf{q}} \in Q$ such that $\nabla\cdot\tilde{\mathbf{q}} = e_{p}^{i}$ and $\lVert \tilde{\mathbf{q}}\rVert\leq C_\Omega \lVert e_{p}^{i}\rVert$ with $\tilde{\mathbf{q}} \cdot \mathbf{n}|_\Gamma = 0$, giving us
        \begin{equation*}
            \langle \bar{k}^{n}\mathbf{q}^{n} - \bar{k}^{i-1}\mathbf{q}^{i}, \tilde{\mathbf{q}} \rangle
            - \langle e_{p}^{i}, e_{p}^{i} \rangle 
            = 0.
        \end{equation*}
        Following the steps in the proof of \Cref{thm: convergence S q}, we add and subtract $\langle \bar{k}^{i-1}\mathbf{q}^{n}, \tilde{\mathbf{q}} \rangle$, giving
        \begin{equation*}
            \lVert e_{p}^{i} \rVert^2 
            = \langle (\bar{k}^{n}-\bar{k}^{i-1})\mathbf{q}^{n}, \tilde{\mathbf{q}} \rangle 
            + \langle \bar{k}^{i-1} e_{\mathbf{q}}^{i}, \tilde{\mathbf{q}} \rangle.
        \end{equation*}
        Using the Cauchy-Schwarz inequality, the Lipschitz continuity and upper bound for $\bar{k}$, and the boundedness of $\mathbf{q}$, we have
        \begin{equation*}
            \lVert e_{p}^{i} \rVert^2 
            \leq L_{\bar{k}} M_{\mathbf{q}} \lVert e^{i-1}_{S} \rVert
            \lVert\tilde{\mathbf{q}} \rVert
            +  M_{\bar{k}}\lVert e_{\mathbf{q}}^{i} \rVert \lVert \tilde{\mathbf{q}} \rVert,
        \end{equation*}
        and, by the bound on $\tilde{\mathbf{q}}$ from Lemma \ref{lemma1},
        \begin{equation*}
            \lVert e_{p}^{i} \rVert^2 
            \leq C_\Omega ( L_{\bar{k}} M_{\mathbf{q}} \lVert e^{i-1}_{S} \rVert
            + M_{\bar{k}} \lVert e_{\mathbf{q}}^{i} \rVert) \lVert e_{p}^{i} \rVert.
        \end{equation*} 
        Thus, we have that
        \begin{equation*}
            \lVert e_{p}^{i} \rVert
            \leq C_\Omega ( L_{\bar{k}} M_{\mathbf{q}} \lVert e^{i-1}_{S} \rVert
            + M_{\bar{k}} \lVert e_{\mathbf{q}}^{i} \rVert),
        \end{equation*} 
        and the result follows from the results of \Cref{thm: convergence S q}.
    \end{proof}
    
    Next, we consider the divergence of the flux. By showing the convergence of the pressure, it allows us to prove that the divergence of the flux converges strongly as well.
    \begin{theorem}\label{theorem3}
        The divergence of the flux $\nabla \cdot \mathbf{q}^{i}$ converges strongly to $\nabla \cdot \mathbf{q}^{n}$. Specifically, 
        \begin{equation*}
            \lim_{i \to \infty} \lVert\nabla \cdot e_{\mathbf{q}}^i \rVert = 0.
        \end{equation*}
    \end{theorem}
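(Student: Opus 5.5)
The plan is to read the divergence error directly off the mass-balance error equation \eqref{eq:22b}. There the divergence appears tested against an arbitrary $\tilde p \in P = L^2(\Omega_1)\times L^2(\Omega_2)$. Since $\nabla\cdot e_{\mathbf{q}}^i \in P$, we may choose $\tilde p = \nabla \cdot e_{\mathbf{q}}^i$ as test function. First we rewrite \eqref{eq:22b} in error form, as in the proof of \Cref{thm: convergence S q}: adding and subtracting $L\langle p^n,\tilde p\rangle$ gives
\begin{equation*}
    \tau \langle \nabla \cdot e_{\mathbf{q}}^{i}, \tilde{p} \rangle
    = L\langle e_p^{i-1} - e_p^{i}, \tilde p\rangle - \langle e_S^{i-1}, \tilde p\rangle .
\end{equation*}
To confirm the sign conventions, we rederive this from \eqref{eq:final scheme b} and \eqref{eq:limit solution b}. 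The scheme involves $S^{i-1}$ while the limit involves $S^n$, so the saturation term is indeed $e_S^{i-1}$.

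Testing with $\tilde p = \nabla\cdot e_{\mathbf{q}}^i$ and applying Cauchy--Schwarz subdomainwise gives
\begin{equation*}
    \tau \lVert \nabla \cdot e_{\mathbf{q}}^{i} \rVert^2 \le \bigl( L\lVert e_p^{i}\rVert + L\lVert e_p^{i-1}\rVert + \lVert e_S^{i-1}\rVert \bigr)\lVert \nabla \cdot e_{\mathbf{q}}^{i} \rVert .
\end{equation*}
Here $L$ denotes $\max_\ell L_\ell$, or the weighted notation \eqref{Notation1} is kept. Dividing by $\lVert\nabla\cdot e_{\mathbf{q}}^i\rVert$ (the case where it vanishes is trivial) yields
\begin{equation*}
    \lVert \nabla \cdot e_{\mathbf{q}}^{i} \rVert \le \frac{1}{\tau}\bigl( L\lVert e_p^{i}\rVert + L\lVert e_p^{i-1}\rVert + \lVert e_S^{i-1}\rVert \bigr).
\end{equation*}

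Finally, we let $i\to\infty$. \Cref{thm: convergence pressure} gives $\lVert e_p^i\rVert\to0$, and hence also $\lVert e_p^{i-1}\rVert\to 0$. \Cref{thm: convergence S q} gives $\lVert e_S^{i}\rVert\to 0$, and the index shift is harmless. Since $\tau>0$ is fixed within the time step, the right-hand side tends to zero, which proves the claim.

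The only delicate point is the bookkeeping with subdomain-dependent constants $L_\ell$ in the summed notation. We handle it by working on each $\Omega_\ell$ separately, obtaining the bound for $\lVert\nabla\cdot e_{\mathbf{q},\ell}^i\rVert_{\Omega_\ell}$, and then summing the squares. Beyond this there is no real obstacle: strong convergence of the pressure was the hard step, and it has already been established.
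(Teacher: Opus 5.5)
Your proposal is correct and follows the paper's argument: test the mass-balance error equation with $\tilde p = \nabla\cdot e_{\mathbf{q}}^i$, apply Cauchy--Schwarz, divide by $\lVert\nabla\cdot e_{\mathbf{q}}^i\rVert$, and conclude from \Cref{thm: convergence S q,thm: convergence pressure}. The differences are cosmetic: you split $\lVert e_p^{i-1}-e_p^{i}\rVert$ by the triangle inequality, and you correctly rederive the saturation term as $e_S^{i-1}$ (the $e_S^{i}$ in \eqref{eq:22b} is a typo that the paper's proof silently corrects).
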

    \begin{proof}
        Starting from \eqref{eq:22b}, we have
        \begin{equation*}
            L\langle e_{p}^{i}, \tilde{p} \rangle 
            - L \langle e_{p}^{i-1}, \tilde{p} \rangle
            + \tau \langle \nabla \cdot e_{\mathbf{q}}^{i}, \tilde{p} \rangle
            +\langle e^{i-1}_{S} , \tilde{p} \rangle
            = 0.
        \end{equation*}
        Since $\mathbf{q}^{i} \in Q$, we have that $\nabla \cdot \mathbf{q}^{i} \in P$. We may therefore test this equation with $\tilde{p} = \nabla \cdot e_{\mathbf{q}}^{i}$ to obtain
        \begin{equation*}
            \tau \lVert \nabla \cdot e_{\mathbf{q}}^{i}\rVert^2
            = L\langle e_{p}^{i-1} - e_{p}^{i}, \nabla \cdot e_{\mathbf{q}}^{i} \rangle
            -\langle e^{i-1}_{S}, \nabla \cdot e_{\mathbf{q}}^{i} \rangle.
        \end{equation*}
        Using the Cauchy-Schwarz inequality, we get
        \begin{equation*}
            \lVert\nabla \cdot e_{\mathbf{q}}^{i}\rVert
            \leq \frac{L}{\tau}\lVert e_{p}^{i-1} - e_{p}^{i} \rVert + \frac{1}{\tau} \lVert e^{i-1}_{S} \rVert,
        \end{equation*}
        and \Cref{thm: convergence S q,thm: convergence pressure} now yield the desired result.
    \end{proof}

    The previous theorem implies that mass is conserved locally in each subdomain. The mass balance across the interface $\Gamma$ is considered in the next subsection. 
   
    \subsection{Interface variables}
    
    In this section, we look at the convergence of the interface variables. For the proofs to come, we need the following norms and function spaces. We start with the Hilbert space of scalar functions with square-integrable weak derivatives, and its subspace with vanishing trace on $\partial\Omega$, respectively, defined as
    \begin{equation*}
        H^1(\Omega) \coloneqq \{ \phi \in L^2(\Omega) \mid \nabla \phi \in [L^2(\Omega)]^d \}, \qquad H_0^1(\Omega) \coloneqq \{ \phi \in H^1(\Omega) \mid \phi = 0 \text{ on } \partial\Omega \}.
    \end{equation*}
    On the interface $\Gamma$, we make use of the trace space $H_{00}^{1/2}(\Gamma)$ from \cite{lions2012non}, consisting of restrictions of $H_0^1(\Omega)$ functions to $\Gamma$.
    Its dual space is denoted by $H^{-1/2}(\Gamma)$, equipped with the norm
    \begin{equation*}
        \lVert \psi \rVert_{H^{-1/2}(\Gamma)} \coloneqq \sup_{\phi \in H^1_0(\Omega)} \frac{\langle \psi, \phi \rangle_{\Gamma}}{\lVert \phi \rVert_{H^1(\Omega)}}.
    \end{equation*}

    For the first result, consider the collective interface term  $\alpha \mathbf{q}_\ell \cdot \mathbf{n}_\ell + g_\ell$. Recall from \eqref{eq:14} that this term corresponds to the interface pressure $p_\ell|_\Gamma$. This post-processing is directly available using the main variables, and we consider its convergence in the following lemma.
    \begin{lemma} \label{lemma4}
     The post-processed interface pressures $p^{i}|_{\Gamma}=\alpha \mathbf{q}^{i} \cdot \mathbf{n} +  g^{i}$ converge strongly to $p^{n}|_{\Gamma}=\alpha \mathbf{q}^{n} \cdot \mathbf{n} +  g^{n}$ in $G$. In particular, 
        \begin{equation*}
            \lim_{i \to \infty} \lVert\alpha e_{\mathbf{q}}^{i} \cdot \mathbf{n} +  e_{g}^{i} \rVert_G = 0.
        \end{equation*}
    \end{lemma}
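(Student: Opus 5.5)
The plan is to work with the combined interface error $\psi^i_\ell \coloneqq \alpha e_{\mathbf{q},\ell}^i\cdot\mathbf{n}_\ell + e_{g,\ell}^i$ and to express $\lVert\psi^i\rVert_G^2$ through quantities already shown to vanish: the iteration differences of $\lVert e_g^i\rVert_G$ and the volume errors from \Cref{thm: convergence S q}, \Cref{thm: convergence pressure} and \Cref{theorem3}.

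First, I would test the Darcy error equation \eqref{eq:22a} with $\tilde{\mathbf{q}} = e_{\mathbf{q}}^i \in Q$. This gives
\begin{equation*}
\langle \psi^i, e_{\mathbf{q}}^i\cdot\mathbf{n}\rangle_G = -\langle (\bar k^n-\bar k^{i-1})\mathbf{q}^n, e_{\mathbf{q}}^i\rangle - \langle \bar k^{i-1} e_{\mathbf{q}}^i, e_{\mathbf{q}}^i\rangle + \langle e_p^i, \nabla\cdot e_{\mathbf{q}}^i\rangle .
\end{equation*}
The right-hand side is bounded, as in term V, by $L_{\bar k}M_{\mathbf{q}}\lVert e_S^{i-1}\rVert\,\lVert e_{\mathbf{q}}^i\rVert + M_{\bar k}\lVert e_{\mathbf{q}}^i\rVert^2 + \lVert e_p^i\rVert\,\lVert\nabla\cdot e_{\mathbf{q}}^i\rVert$. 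All three terms tend to zero by \Cref{thm: convergence S q}, \Cref{thm: convergence pressure} and \Cref{theorem3}. Hence $\langle\psi^i, e_{\mathbf{q}}^i\cdot\mathbf{n}\rangle_G\to 0$. By the computation in step IV, this quantity equals $\frac{1}{4\alpha}(\lVert e_g^{i+1}\rVert_G^2-\lVert e_g^i\rVert_G^2)$, so consecutive differences of $\lVert e_g^i\rVert_G^2$ vanish. (The same conclusion follows from monotonicity: the per-iteration estimate shows that $\frac L2\lVert e_p^i\rVert^2+\frac{\tau}{4\alpha}\lVert e_g^{i+1}\rVert_G^2$ is nonincreasing, hence convergent, and $\lVert e_p^i\rVert\to0$.)

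Second, I would use the update identity \eqref{eq:34} in the form $e_{g,\jay}^{i+1} = 2\psi_\ell^i - e_{g,\ell}^i$. Squaring and summing over $\ell$ gives
\begin{equation*}
\lVert e_g^{i+1}\rVert_G^2 = 4\lVert\psi^i\rVert_G^2 - 4\langle\psi^i, e_g^i\rangle_G + \lVert e_g^i\rVert_G^2 ,
\end{equation*}
so that $\lVert\psi^i\rVert_G^2 = \langle \psi^i, e_g^i\rangle_G + o(1)$. Writing $e_g^i = \psi^i - \alpha e_{\mathbf{q}}^i\cdot\mathbf{n}$, this becomes $\lVert\psi^i\rVert_G^2 = \lVert\psi^i\rVert_G^2 - \alpha\langle\psi^i, e_{\mathbf{q}}^i\cdot\mathbf{n}\rangle_G + o(1)$. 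This is consistent with the first step but does not yet yield smallness. So the real content has to come from an additional estimate on $\psi^i$ itself.

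This is where I expect the main obstacle. A lifting of $\psi^i$ through \eqref{eq:22a}, using a $\tilde{\mathbf{q}}$ whose normal trace on $\Gamma$ equals a given datum and whose $H(\mathrm{div})$ norm is controlled, only yields convergence of $\psi^i$ in $H^{-1/2}(\Gamma)$, not in $L^2(\Gamma)$. My first attempt would be to combine this dual-norm convergence with the uniform $G$-bound on $e_g^i$ from \eqref{eq:41} and with the splitting $\psi_\ell^i = \tfrac12(e_{g,\jay}^{i+1}+e_{g,\ell}^i)$. The aim is to show $\langle\psi^i, e_g^i\rangle_G\to 0$, or equivalently that $\lVert e_g^{i+1}\rVert_G^2 + \lVert e_g^i\rVert_G^2 + 2\langle e_{g,\jay}^{i+1}, e_{g,\ell}^i\rangle \to 0$ along the iteration, using the weak convergence of \Cref{lem: weak convergence p and g} to control the cross term. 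If that fails, I would fall back on proving the statement in the discrete (finite-dimensional) setting, where $H^{-1/2}$ and $L^2(\Gamma)$ norms are equivalent, as the earlier remark suggests.
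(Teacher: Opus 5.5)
Your proposal has a genuine gap: it does not prove the lemma. Your first two steps are correct, but, as you note, they only give $\langle\psi^i, e_{\mathbf q}^i\cdot\mathbf n\rangle_G\to 0$ together with an identity consistent with it. The remaining plan is circular. Since $\langle\psi^i,e_g^i\rangle_G=\lVert\psi^i\rVert_G^2-\alpha\langle\psi^i,e_{\mathbf q}^i\cdot\mathbf n\rangle_G$, showing $\langle\psi^i,e_g^i\rangle_G\to0$ is the lemma itself. Likewise, your target expression $\lVert e_g^{i+1}\rVert_G^2+\lVert e_g^i\rVert_G^2+2\sum_\ell\langle e_{g,\jay}^{i+1},e_{g,\ell}^i\rangle_\Gamma$ is just $4\lVert\psi^i\rVert_G^2$. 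The weak convergence of \Cref{lem: weak convergence p and g} cannot control that cross term, because the inner product of two weakly null sequences need not tend to zero. The finite-dimensional fallback proves a different, discrete statement.

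The missing idea is the step you dismissed, and your diagnosis of it has the duality reversed. Set $\epsilon_i\coloneqq M_{\bar k}\lVert e_{\mathbf q}^i\rVert+\lVert e_p^i\rVert+L_{\bar k}M_{\mathbf q}\lVert e_S^{i-1}\rVert$. Rearranging \eqref{eq:22a} gives $\langle\psi^i,\tilde{\mathbf q}\cdot\mathbf n\rangle_G\le\epsilon_i(\lVert\tilde{\mathbf q}\rVert+\lVert\nabla\cdot\tilde{\mathbf q}\rVert)$ for all $\tilde{\mathbf q}\in Q$. Now test with a lifting $\mathcal R_\ell\varphi_\ell$ of a trace datum, with vanishing normal trace on $\partial\Omega_N$ and $H(\mathrm{div};\Omega_\ell)$ norm bounded by $\lVert\varphi_\ell\rVert_{H^{-1/2}(\Gamma)}$. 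This yields $\langle\psi^i,\varphi\rangle_G\lesssim\epsilon_i\sum_\ell\lVert\varphi_\ell\rVert_{H^{-1/2}(\Gamma)}$. That controls $\psi^i$ in the dual of $H^{-1/2}$, an $H^{1/2}$-type norm, which is stronger than $L^2(\Gamma)$, not weaker.

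The paper simply takes $\varphi=\psi^i$ itself, which is admissible since $\psi^i\in G$ gives $\mathcal R_\ell\psi^i_\ell\in Q_\ell$. The left side becomes $\lVert\psi^i\rVert_G^2$. The right side is $\lesssim\epsilon_i\lVert\psi^i\rVert_{H^{-1/2}}\lesssim\epsilon_i\lVert\psi^i\rVert_G$ by the embedding $L^2(\Gamma)\hookrightarrow H^{-1/2}(\Gamma)$. Dividing gives $\lVert\psi^i\rVert_G\lesssim\epsilon_i\to0$ by \Cref{thm: convergence S q} and \Cref{thm: convergence pressure}; neither the divergence convergence nor your iteration identities are needed.

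The $H^{-1/2}$-only conclusion you had in mind is what happens in \Cref{thm: g convergence in H1/2}. There $e_g^i$ must be separated from $\alpha e_{\mathbf q}^i\cdot\mathbf n$, and $H^1$ test fields with a trace inequality are used. For the combined quantity $\psi^i$, the $L^2(\Gamma)$ pairing appears directly in the error equation and no such loss occurs.
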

    \begin{proof}
        Starting from \eqref{eq:22a}
        \begin{equation*}
            \langle (\bar{k}^{n} - \bar{k}^{i-1}) \mathbf{q}^{n}, \tilde{\mathbf{q}} \rangle
             - \langle \bar{k}^{i-1} e_{\mathbf{q}}^i, \tilde{\mathbf{q}} \rangle
            - \langle e_{p}^{i}, \nabla \cdot \tilde{\mathbf{q}} \rangle 
            + \alpha \langle e_{\mathbf{q}}^{i} \cdot \mathbf{n}, \tilde{\mathbf{q}} \cdot \mathbf{n} \rangle_G
            + \langle e_{g}^{i}, \tilde{\mathbf{q}} \cdot \mathbf{n} \rangle_G 
            = 0.
        \end{equation*}
        By rearranging, we get
        \begin{equation*}
            \langle \alpha e_{\mathbf{q}}^{i} \cdot \mathbf{n}
            +  e_{g}^{i}, \tilde{\mathbf{q}} \cdot \mathbf{n} \rangle_G 
            =\langle \bar{k}^{i-1} e_{\mathbf{q}}^i, \tilde{\mathbf{q}} \rangle
            + \langle e_{p}^{i}, \nabla \cdot \tilde{\mathbf{q}} \rangle
            -\langle (\bar{k}^{n} - \bar{k}^{i-1}) \mathbf{q}^{n}, \tilde{\mathbf{q}} \rangle.
        \end{equation*}
        For ease of reading, we define $\gamma^i  \coloneqq  \alpha e_{\mathbf{q}}^{i} \cdot \mathbf{n} +  e_{g}^{i}$. 
        By applying the Cauchy-Schwarz inequality, we get
        \begin{equation*}
            \langle \gamma^i, \tilde{\mathbf{q}} \cdot \mathbf{n} \rangle_G 
            \leq \lVert\bar{k}^{i-1} e_{\mathbf{q}}^i \rVert \lVert \tilde{\mathbf{q}} \rVert
            + \lVert e_{p}^{i} \rVert \lVert \nabla \cdot \tilde{\mathbf{q}} \rVert
            + \lVert (\bar{k}^{n} - \bar{k}^{i-1}) \mathbf{q}^{n} \rVert \lVert \tilde{\mathbf{q}} \rVert.
        \end{equation*}
        Using the Lipschitz continuity and boundedness of $\bar{k}$, and the bound on $\mathbf{q}^n$ from Assumption~\ref{AssumptionTwo}, we have
        \begin{align} \label{eq: interfaceBound}
            \langle \gamma^i, \tilde{\mathbf{q}} \cdot \mathbf{n} \rangle_G
            &\leq M_{\bar{k}}\lVert e_{\mathbf{q}}^{i} \rVert \lVert\tilde{\mathbf{q}} \rVert
            + \lVert e_{p}^{i} \rVert \lVert\nabla \cdot \tilde{\mathbf{q}} \rVert
            + L_{\bar{k}} M_{\mathbf{q}} \lVert e^{i-1}_{S} \rVert
            \lVert \tilde{\mathbf{q}} \rVert \nonumber \\
            &\le ( M_{\bar{k}}\lVert e_{\mathbf{q}}^{i} \rVert
            + \lVert e_{p}^{i} \rVert
            + L_{\bar{k}} M_{\mathbf{q}} \lVert e^{i-1}_{S} \rVert) 
            (\lVert \tilde{\mathbf{q}} \rVert + \lVert \nabla \cdot \tilde{\mathbf{q}} \rVert ).
        \end{align}
        With $\gamma^i \in G \subset [H^{-1/2}(\Gamma)]^2$ and $\tilde{\mathbf{q}} \in Q$, we proceed with a particular choice of $\tilde{\mathbf{q}}$. Let $\mathcal{R}_\ell: H^{-1/2}(\Gamma) \to H(\mathrm{div}; \Omega_\ell)$ be the bounded extension operator from \cite[Section 4.1.2]{Quarteroni1999}, 
        with $(\mathcal{R}_\ell \psi_\ell \cdot \mathbf{n}_\ell)|_\Gamma = \psi_\ell$ on $\Gamma$. 
        For this extension, the following bounds are satisfied
        \begin{equation} \label{eq: boundedness extension}
            \lVert \psi_\ell \rVert_{H^{-1/2}(\Gamma)}
            \lesssim \lVert \mathcal{R}_\ell \psi_\ell \rVert_{H(\mathrm{div};\Omega_\ell)}
            \lesssim \lVert \psi_\ell \rVert_{H^{-1/2}(\Gamma)}, 
            \qquad \forall \ \psi_\ell \in H^{-1/2}(\Gamma).
        \end{equation}  
        Let $\tilde{\mathbf{q}}$ be formed using these extensions of the normal trace, i.e., such that $\tilde{\mathbf{q}}_\ell = \mathcal{R}_\ell \gamma_\ell^i$. The bound \eqref{eq: boundedness extension} and the continuous embedding $L^2(\Gamma) \hookrightarrow H^{-1/2}(\Gamma)$, give us
        \begin{equation} \label{eq: embedding bound}
            \lVert\mathcal{R}_\ell \gamma_\ell^i \rVert_{H(\mathrm{div}; \Omega_\ell)}
            \lesssim \lVert\gamma_\ell^i \rVert_{H^{-1/2}(\Gamma)}
            \lesssim \lVert\gamma_\ell^i \rVert_{L^2(\Gamma)}.
        \end{equation}
        By the definition of the norm $\lVert \cdot \rVert$ from \eqref{Notation1} and the $H(\mathrm{div};\Omega_\ell)$ norm from \eqref{eq:Hdiv norm}, we arrive at a bound for the extension, that is
        \begin{equation} \label{eq: extensionBound}
            \lVert \tilde{\mathbf{q}} \rVert + \lVert \nabla \cdot \tilde{\mathbf{q}} \rVert
            \eqsim \sum_{\ell = 1}^2 
            \lVert\mathcal{R}_\ell \gamma_\ell^i \rVert_{H(\mathrm{div}; \Omega_\ell)}
            \lesssim \lVert\gamma^i \rVert_G.
        \end{equation}
        Substituting the test function in \eqref{eq: interfaceBound} with $\gamma^i$, the bound \eqref{eq: extensionBound} gives
        \begin{equation*}
            \lVert\gamma^i \rVert^2_G 
            \lesssim (M_{\bar{k}}\lVert e_{\mathbf{q}}^{i} \rVert
            + \lVert e_{p}^{i} \rVert
            + L_{\bar{k}} M_{\mathbf{q}} \lVert e^{i-1}_{S} \rVert) \|\gamma^i\|_G.
        \end{equation*} Using the limits from Theorems \ref{thm: convergence S q} and \ref{thm: convergence pressure}, we arrive at
        \begin{equation*}
            \lim_{i \to \infty} \lVert\alpha e_{\mathbf{q}}^{i} \cdot \mathbf{n}
            +  e_{g}^{i} \rVert_G
            = \lim_{i \to \infty} \lVert\gamma^i \rVert_G 
            \lesssim \lim_{i \to \infty} ( M_{\bar{k}}\lVert e_{\mathbf{q}}^{i} \rVert
            + \lVert e_{p}^{i} \rVert
            + L_{\bar{k}} M_{\mathbf{q}} \lVert e^{i-1}_{S} \rVert)
            =0.
        \end{equation*}
    \end{proof}
    Next, we consider the Robin-variable $g$. We already showed weak convergence in $G$ for this variable in Lemma \ref{lem: weak convergence p and g}. The following lemma shows strong convergence in a weaker norm, namely by considering
    \begin{align} \label{eq:equiv norm}
        \Phi_\ell &\coloneqq \{\phi_\ell \in [H^1(\Omega_\ell)]^d : \phi_\ell|_{\partial \Omega} = 0\}, &
        \lVert g \rVert_{G, -1/2} 
        &\coloneqq 
        \sum_{\ell = 1}^2
        \sup_{\phi_\ell \in \Phi_\ell}
        \frac{\langle g_\ell, \phi_\ell \cdot \mathbf{n} \rangle_\Gamma}
        {\lVert \phi_\ell \rVert_{H^1(\Omega_\ell)}}.
    \end{align}

    \begin{theorem} \label{thm: g convergence in H1/2}
        The Robin-variable $g^{i}$ converges strongly to $g^n$ in the norm from \eqref{eq:equiv norm}. In particular,
        \begin{equation*}
            \lim_{i \to \infty} \lVert e_g^i \rVert_{G, -1/2} = 0.
        \end{equation*}
        \end{theorem}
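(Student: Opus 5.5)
The plan is to test the Darcy error equation \eqref{eq:22a} with $H^1$ vector fields whose normal traces on $\Gamma$ realise the dual norm in \eqref{eq:equiv norm}. Fix $\ell$ and take $\phi_\ell \in \Phi_\ell$. Extend it by zero to the other subdomain to obtain $\tilde{\mathbf{q}}$ with $\tilde{\mathbf{q}}_\ell = \phi_\ell$ and $\tilde{\mathbf{q}}_\jay = 0$. Since $\phi_\ell \in [H^1(\Omega_\ell)]^d$ vanishes on $\partial\Omega$, we have $\nabla\cdot\phi_\ell \in L^2(\Omega_\ell)$ and its normal trace lies in $H^{1/2}(\Gamma)\subset L^2(\Gamma)$; hence $\tilde{\mathbf{q}} \in Q$ is admissible. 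Inserting it in \eqref{eq:22a}, after adding and subtracting $\langle \bar{k}^{i-1}\mathbf{q}^n,\tilde{\mathbf{q}}\rangle$ as in the proof of \Cref{thm: convergence S q}, gives
\begin{equation*}
\langle e_{g,\ell}^i, \phi_\ell\cdot\mathbf{n}_\ell\rangle_\Gamma
= -\langle (\bar{k}^n_\ell-\bar{k}^{i-1}_\ell)\mathbf{q}^n_\ell, \phi_\ell\rangle_{\Omega_\ell}
- \langle \bar{k}^{i-1}_\ell e_{\mathbf{q},\ell}^i, \phi_\ell\rangle_{\Omega_\ell}
+ \langle e_{p,\ell}^i, \nabla\cdot\phi_\ell\rangle_{\Omega_\ell}
- \alpha\langle e_{\mathbf{q},\ell}^i\cdot\mathbf{n}_\ell, \phi_\ell\cdot\mathbf{n}_\ell\rangle_\Gamma .
\end{equation*}

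The three volume terms are handled exactly as in the proof of \Cref{lemma4}: Cauchy--Schwarz, Assumptions~\ref{AssumptionOne} and \ref{AssumptionTwo}, and $\lVert\phi_\ell\rVert_{\Omega_\ell} + \lVert\nabla\cdot\phi_\ell\rVert_{\Omega_\ell} \lesssim \lVert\phi_\ell\rVert_{H^1(\Omega_\ell)}$. Together they are bounded by
\[
\bigl(M_{\bar{k}}\lVert e_{\mathbf{q}}^i\rVert + \lVert e_p^i\rVert + L_{\bar{k}}M_{\mathbf{q}}\lVert e_S^{i-1}\rVert\bigr)\lVert\phi_\ell\rVert_{H^1(\Omega_\ell)},
\]
and this prefactor tends to zero by \Cref{thm: convergence S q} and \Cref{thm: convergence pressure}.

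The main obstacle is the remaining interface term $\alpha\langle e_{\mathbf{q},\ell}^i\cdot\mathbf{n}_\ell,\phi_\ell\cdot\mathbf{n}_\ell\rangle_\Gamma$. The trace theorem gives $\lVert\phi_\ell\cdot\mathbf{n}_\ell\rVert_{L^2(\Gamma)}\lesssim\lVert\phi_\ell\rVert_{H^1(\Omega_\ell)}$, so it suffices to show $e_{\mathbf{q}}^i\cdot\mathbf{n}\to0$ in $G$, or at least in the $H^{-1/2}$-type dual norm. I would get this from the energy estimate \eqref{eq:41}. The identity used in step IV of the proof of \Cref{thm: convergence S q} writes $4\alpha^2\lVert e_{\mathbf{q}}^i\cdot\mathbf{n}\rVert_G^2$ through $\lVert e_g^{i+1}\rVert_G^2-\lVert e_g^i\rVert_G^2$ and a cross term. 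Telescoping it only yields boundedness, so instead I would reuse \eqref{eq:34}: $e^{i+1}_{g,\jay} = 2\alpha e^i_{\mathbf{q},\ell}\cdot\mathbf{n}_\ell + e^i_{g,\ell} = \alpha e^i_{\mathbf{q},\ell}\cdot\mathbf{n}_\ell + \gamma^i_\ell$, where $\gamma^i$ is the quantity of \Cref{lemma4}. This gives $\alpha e_{\mathbf{q}}^i\cdot\mathbf{n} = \mathcal{S}e_g^{i+1} - \gamma^i$, with $\mathcal{S}$ the swap of the two components, and therefore $e_g^i = \gamma^i - \alpha e_{\mathbf{q}}^i\cdot\mathbf{n} = 2\gamma^i - \mathcal{S}e_g^{i+1}$. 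Since $\gamma^i\to0$ strongly in $G$ by \Cref{lemma4}, it suffices to control $\mathcal{S}e_g^{i+1}$ in the weak norm, which circles back to the claim itself.

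To break this circularity I would use a compactness argument. The sequence $e_g^i$ is bounded in $G$ by \eqref{eq:41} and converges weakly to zero by \Cref{lem: weak convergence p and g}. The normal-trace map $\Phi_\ell\ni\phi_\ell\mapsto\phi_\ell\cdot\mathbf{n}_\ell\in L^2(\Gamma)$ is compact, because it factors through $H^{1/2}(\Gamma)$ and $H^{1/2}(\Gamma)\hookrightarrow L^2(\Gamma)$ compactly. Consequently the embedding of $L^2(\Gamma)$ into the dual norm \eqref{eq:equiv norm} is compact, being the adjoint of a compact operator. A compact operator sends the bounded, weakly null sequence $e_g^i$ to a strongly null sequence, which gives $\lVert e_g^i\rVert_{G,-1/2}\to0$ directly, componentwise in $\ell$. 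I would present this compactness step as the core of the proof and keep the explicit bound of the volume terms as a supporting estimate, verifying in particular the compactness of the trace embedding for Lipschitz $\Gamma$.
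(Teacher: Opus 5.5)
Your proof is correct, but it takes a different route from the paper, and the circularity you ran into can be avoided. The paper tests \eqref{eq:beforesumming} with $\phi_\ell\in\Phi_\ell$ exactly as you do. It then bounds the interface term directly by the $H(\mathrm{div})$ normal-trace estimate $\alpha\langle e_{\mathbf{q},\ell}^i\cdot\mathbf{n}_\ell,\phi_\ell\cdot\mathbf{n}_\ell\rangle_\Gamma\lesssim\alpha\lVert e_{\mathbf{q},\ell}^i\rVert_{H(\mathrm{div};\Omega_\ell)}\lVert\phi_\ell\rVert_{H^1(\Omega_\ell)}$. So the $H^{-1/2}$-type convergence of $e_{\mathbf{q}}^i\cdot\mathbf{n}$ that you said would suffice follows at once from \Cref{thm: convergence S q} and \Cref{theorem3}. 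Together with \Cref{thm: convergence pressure} this gives the explicit bound $\lVert e_g^i\rVert_{G,-1/2}\lesssim\alpha(\lVert e_{\mathbf{q}}^i\rVert+\lVert\nabla\cdot e_{\mathbf{q}}^i\rVert)+\lVert e_p^i\rVert+\lVert\bar{k}^n\mathbf{q}^n-\bar{k}^{i-1}\mathbf{q}^i\rVert$.

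Your argument instead combines three ingredients: the bound \eqref{eq:41}, the weak convergence of \Cref{lem: weak convergence p and g}, and the compactness of $\phi_\ell\mapsto(\phi_\ell\cdot\mathbf{n}_\ell)|_\Gamma$ from $\Phi_\ell$ into $L^2(\Gamma)$. This makes $\lVert\cdot\rVert_{G,-1/2}$ the norm of the image under a compact adjoint. The argument is valid and needs neither \Cref{thm: convergence pressure} nor \Cref{theorem3}. It in fact shows that every bounded, weakly null sequence in $G$ is null in $\lVert\cdot\rVert_{G,-1/2}$, so your explicit bound on the volume terms becomes superfluous.

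The price is that your result is purely qualitative. It also rests entirely on \Cref{lem: weak convergence p and g}, whose proof in the paper is only sketched. The paper's estimate, by contrast, transfers any convergence rate of the volume errors directly to $e_g^i$.

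One technical correction: on a merely Lipschitz $\Gamma$, $\phi_\ell\cdot\mathbf{n}_\ell$ need not lie in $H^{1/2}(\Gamma)$. You should therefore factor the compactness through the vector trace $\phi_\ell|_\Gamma\in[H^{1/2}(\Gamma)]^d$, which embeds compactly into $[L^2(\Gamma)]^d$, followed by the bounded multiplication by $\mathbf{n}_\ell\in[L^\infty(\Gamma)]^d$.
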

        
        \begin{proof}
        Starting from \eqref{eq:beforesumming}
        \begin{equation*}
            \langle \bar{k}_\ell^{n}\mathbf{q}_\ell^{n} - \bar{k}_\ell^{i-1}\mathbf{q}_\ell^{i}, \tilde{\mathbf{q}}_\ell \rangle_{\Omega_\ell} 
            - \langle e_{p_\ell}^{i}, \nabla \cdot \tilde{\mathbf{q}}_\ell \rangle_{\Omega_\ell} 
            + \alpha \langle e_{\mathbf{q},\ell}^{i} \cdot \mathbf{n}_\ell, \tilde{\mathbf{q}}_\ell \cdot \mathbf{n}_\ell \rangle_\Gamma
            + \langle e_{g,\ell}^{i}, \tilde{\mathbf{q}}_\ell \cdot \mathbf{n}_\ell \rangle_\Gamma
            = 0,
        \end{equation*}
        since $\Phi_\ell \subseteq Q_\ell$, we may choose the test function $\mathbf{\tilde{q}}_\ell$ from that space, i.e., $\phi_\ell$. By considering the numerator in \eqref{eq:equiv norm}, we rearrange the above equation and apply the Cauchy-Schwarz and trace inequalities, thus giving us
        \begin{equation*}
        \begin{aligned}
            \langle e_{g, \ell}, \phi_\ell \cdot \mathbf{n}_\ell \rangle_\Gamma 
            &= -\alpha \langle e_{\mathbf{q}, \ell} \cdot \mathbf{n}_\ell, \phi_\ell \cdot \mathbf{n}_\ell \rangle_\Gamma + \langle e_{p, \ell}, \nabla \cdot \phi_\ell \rangle_{\Omega_\ell} 
            - \langle \bar{k}_\ell^n \mathbf{q}_\ell^{n} - \bar{k}_\ell^{i-1} \mathbf{q}_\ell^{i}, \phi_\ell \rangle_{\Omega_\ell} \\
            &\leq \alpha \lVert e_{\mathbf{q}, \ell} \cdot \mathbf{n}_\ell \rVert_{H^{-1/2}(\Gamma)} \lVert \phi_\ell \cdot \mathbf{n}_\ell \rVert_{H^{1/2}(\Gamma)}
            + \lVert e_{p, \ell} \rVert_{\Omega_\ell} \lVert \nabla \cdot \phi_\ell \rVert_{\Omega_\ell} 
            + \lVert\bar{k}_\ell^n \mathbf{q}_\ell^{n} - \bar{k}_\ell^{i-1} \mathbf{q}_\ell^{i} \rVert_{\Omega_\ell}\lVert \phi_\ell \rVert_{\Omega_\ell}\\ 
            &\lesssim \alpha \lVert e_{\mathbf{q}, \ell} \rVert_{H(\mathrm{div};\Omega_\ell)} \lVert \phi_\ell \rVert_{H^{1}(\Omega_\ell)} 
            + (\lVert e_{p, \ell} \rVert_{\Omega_\ell}+ \lVert\bar{k}_\ell^n \mathbf{q}_\ell^{n} - \bar{k}_\ell^{i-1} \mathbf{q}_\ell^{i} \rVert_{\Omega_\ell}) \lVert\phi_\ell \rVert_{H^{1}(\Omega_\ell)}.
        \end{aligned}
        \end{equation*}
        Dividing by $\lVert \phi_\ell \rVert_{H^1(\Omega_\ell)}$, taking the supremum over $\phi_\ell \in \Phi_\ell$, and adding over the subdomains, we obtain
        \begin{equation*}
            \lVert e_g^i \rVert_{G, -1/2}
            \lesssim 
            \alpha (\lVert e_{\mathbf{q}}^i \rVert + \lVert \nabla \cdot e_{\mathbf{q}}^i \rVert)
            + \lVert e_p^i \rVert
            + \lVert \bar{k}^n \mathbf{q}^n - \bar{k}^{i-1} \mathbf{q}^i \rVert.
        \end{equation*}
        The last term is handled, as before, by adding and subtracting $\bar{k}^{i-1}\mathbf{q}^{n}$. Then Theorem~\ref{thm: convergence S q}, Theorem~\ref{thm: convergence pressure}, and Theorem~\ref{theorem3} imply that all terms on the right-hand side converge to zero as $i \to \infty$, yielding the desired result.
        \end{proof}

        As the last result, we consider the normal flux $\mathbf{q} \cdot \mathbf{n}$, and present two corollaries showing strong convergence in the weaker norm \Cref{eq:equiv norm}.
        \begin{corollary} \label{thm:weak q n convergence}
            The normal flux $\mathbf{q}^{i} \cdot \mathbf{n}$ converges to $\mathbf{q}^n \cdot \mathbf{n}$ in the sense that
            \begin{equation*}
                \lim_{i \to \infty} \lVert e_\mathbf{q}^i \cdot \mathbf{n} \rVert_{G, -1/2} = 0.
            \end{equation*}
        \end{corollary}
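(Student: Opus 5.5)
The most direct route combines Lemma~\ref{lemma4} with Theorem~\ref{thm: g convergence in H1/2}. By definition,
\begin{equation*}
    \alpha\, e_{\mathbf{q}}^i \cdot \mathbf{n} = \gamma^i - e_g^i, \qquad \gamma^i \coloneqq \alpha e_{\mathbf{q}}^{i} \cdot \mathbf{n} + e_g^{i}.
\end{equation*}
Since $\lVert\cdot\rVert_{G,-1/2}$ is a sum of suprema of linear functionals, it is a seminorm. The triangle inequality and $\alpha>0$ then give
\begin{equation*}
    \lVert e_{\mathbf{q}}^i\cdot\mathbf{n}\rVert_{G,-1/2} \le \frac{1}{\alpha}\left(\lVert \gamma^i\rVert_{G,-1/2} + \lVert e_g^i\rVert_{G,-1/2}\right).
\end{equation*}
The second term tends to zero by Theorem~\ref{thm: g convergence in H1/2}. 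It therefore remains to control the first term by the $G$-norm.

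For that, I bound $\lVert\cdot\rVert_{G,-1/2}$ by a multiple of $\lVert\cdot\rVert_G$. For $\psi_\ell\in L^2(\Gamma)$ and $\phi_\ell\in\Phi_\ell$, Cauchy--Schwarz on $\Gamma$ and the trace inequality $\lVert \phi_\ell\rVert_{L^2(\Gamma)}\lesssim\lVert\phi_\ell\rVert_{H^1(\Omega_\ell)}$ (valid because $\Omega_\ell$ is Lipschitz) give
\begin{equation*}
    \langle \psi_\ell,\phi_\ell\cdot\mathbf{n}\rangle_\Gamma\le \lVert\psi_\ell\rVert_{L^2(\Gamma)}\lVert\phi_\ell\rVert_{L^2(\Gamma)} \lesssim \lVert\psi_\ell\rVert_{L^2(\Gamma)}\lVert\phi_\ell\rVert_{H^1(\Omega_\ell)}.
\end{equation*}
Dividing by $\lVert\phi_\ell\rVert_{H^1(\Omega_\ell)}$, taking suprema and summing over $\ell$ yields $\lVert\psi\rVert_{G,-1/2}\lesssim \lVert\psi\rVert_G$, which is the continuous embedding of $L^2(\Gamma)$ into the dual trace space. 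Applied to $\psi=\gamma^i$ with Lemma~\ref{lemma4}, this gives $\lVert\gamma^i\rVert_{G,-1/2}\to 0$, and the corollary follows.

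An alternative, which avoids Lemma~\ref{lemma4}, is to reuse the estimate from the proof of Theorem~\ref{thm: g convergence in H1/2}. The normal-trace bound $\lVert e_{\mathbf{q},\ell}\cdot\mathbf{n}_\ell\rVert_{H^{-1/2}(\Gamma)}\lesssim \lVert e_{\mathbf{q},\ell}\rVert_{H(\mathrm{div};\Omega_\ell)}$ together with Theorems~\ref{thm: convergence S q} and~\ref{theorem3} gives the result directly. The one point that needs care in this route is that the supremum in \eqref{eq:equiv norm} runs over $\phi_\ell\in\Phi_\ell$, which vanish on $\partial\Omega$ but not necessarily on $\Gamma$. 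The duality estimate must therefore be justified through Green's formula on $\Omega_\ell$, namely $\langle e_{\mathbf{q}}\cdot\mathbf{n},\phi\cdot\mathbf{n}\rangle$ expressed as a volume pairing. I regard this as the main technical subtlety, and it is why I prefer the first, purely $L^2(\Gamma)$-based route.
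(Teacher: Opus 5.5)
Your main route is correct and is exactly the paper's proof: write $\alpha e_{\mathbf{q}}^i\cdot\mathbf{n} = \gamma^i - e_g^i$, apply the triangle inequality in $\lVert\cdot\rVert_{G,-1/2}$, and conclude with Lemma~\ref{lemma4} and Theorem~\ref{thm: g convergence in H1/2}. The one thing you add is the explicit bound $\lVert\psi\rVert_{G,-1/2}\lesssim\lVert\psi\rVert_G$ via Cauchy--Schwarz and the trace inequality, which the paper uses tacitly when it passes from the $G$-norm convergence of Lemma~\ref{lemma4} to the weaker norm.
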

        \begin{proof}
        By adding and subtracting $e_{g}^{i}$, and applying the triangle inequality, we get
        \begin{equation*}
            \alpha \lVert e_{\mathbf{q}}^{i} \cdot \mathbf{n} \rVert_{G, -1/2} 
            \leq \lVert\alpha e_{\mathbf{q}}^{i} \cdot \mathbf{n} + e_{g}^{i} \rVert_{G, -1/2} 
            + \lVert e_{g}^{i} \rVert_{G, -1/2}.
        \end{equation*}
        Taking the limit as $i \to \infty$ and using Lemma \ref{lemma4} and Lemma \ref{thm: g convergence in H1/2}, remembering that $\alpha > 0$, we conclude the result.
    \end{proof}

     \begin{corollary} \label{lem:flux continuity}
            In the limit, the solution has normal flux continuity across the interface $\Gamma$ in the sense that
            \begin{align}
                \lim_{i \to \infty} \lVert \mathbf{q}^{i}_1 \cdot \mathbf{n}_1 + \mathbf{q}^{i}_2 \cdot \mathbf{n}_2 \rVert_{H^{-1/2}(\Gamma)} 
            = 0.
            \end{align}
        \end{corollary}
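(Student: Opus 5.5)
The plan is to show that the limit solution itself has continuous normal flux across $\Gamma$, and then use Corollary~\ref{thm:weak q n convergence} to transfer this to the iterates. By the triangle inequality,
\begin{equation*}
\lVert \mathbf{q}^{i}_1 \cdot \mathbf{n}_1 + \mathbf{q}^{i}_2 \cdot \mathbf{n}_2 \rVert_{H^{-1/2}(\Gamma)}
\le \lVert \mathbf{q}^{n}_1 \cdot \mathbf{n}_1 + \mathbf{q}^{n}_2 \cdot \mathbf{n}_2 \rVert_{H^{-1/2}(\Gamma)}
+ \sum_{\ell=1}^2 \lVert e^{i}_{\mathbf{q},\ell} \cdot \mathbf{n}_\ell \rVert_{H^{-1/2}(\Gamma)} .
\end{equation*}
It therefore suffices to show two things: that the first term vanishes, and that the sum is controlled by $\lVert e_{\mathbf{q}}^i \cdot \mathbf{n} \rVert_{G,-1/2}$.

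\textbf{Step 1 (the limit problem).} I use \eqref{eq:limit solution c} with $\ell=1$ and $\ell=2$. Since both identities hold for all $\tilde g\in G$ and $G=[L^2(\Gamma)]^2$, they hold pointwise in $L^2(\Gamma)$:
\begin{equation*}
g_1^n = 2\alpha\,\mathbf{q}_2^n\cdot\mathbf{n}_2 + g_2^n,
\qquad
g_2^n = 2\alpha\,\mathbf{q}_1^n\cdot\mathbf{n}_1 + g_1^n .
\end{equation*}
Adding the two gives $2\alpha(\mathbf{q}_1^n\cdot\mathbf{n}_1+\mathbf{q}_2^n\cdot\mathbf{n}_2)=0$. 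Since $\alpha>0$, this yields exact flux continuity $\mathbf{q}_1^n\cdot\mathbf{n}_1+\mathbf{q}_2^n\cdot\mathbf{n}_2=0$ in $L^2(\Gamma)$, and hence also in $H^{-1/2}(\Gamma)$.

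\textbf{Step 2 (comparing the norms).} I need, for each $\ell$,
\begin{equation*}
\lVert \psi \rVert_{H^{-1/2}(\Gamma)} \lesssim \sup_{\phi_\ell\in\Phi_\ell} \frac{\langle \psi, \phi_\ell\cdot\mathbf{n}_\ell\rangle_\Gamma}{\lVert \phi_\ell\rVert_{H^1(\Omega_\ell)}} .
\end{equation*}
Given a scalar $\phi\in H^1_0(\Omega)$, I would construct a vector field $\phi_\ell\in\Phi_\ell$ with $\phi_\ell\cdot\mathbf{n}_\ell=\phi$ on $\Gamma$ and $\lVert\phi_\ell\rVert_{H^1(\Omega_\ell)}\lesssim\lVert\phi\rVert_{H^1(\Omega)}$. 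The construction is to restrict $\phi$ to $\Omega_\ell$ and multiply by a bounded $H^1$ (e.g.\ Lipschitz) extension $\mathbf{N}_\ell$ of the normal field $\mathbf{n}_\ell$ from $\Gamma$ into $\Omega_\ell$, setting $\phi_\ell=\phi\,\mathbf{N}_\ell$. Then
\begin{equation*}
\frac{\langle \psi,\phi\rangle_\Gamma}{\lVert\phi\rVert_{H^1(\Omega)}} \lesssim \frac{\langle \psi,\phi_\ell\cdot\mathbf{n}_\ell\rangle_\Gamma}{\lVert\phi_\ell\rVert_{H^1(\Omega_\ell)}} ,
\end{equation*}
and taking the supremum over $\phi$ gives the claimed inequality. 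Applying it to $\psi=e^i_{\mathbf{q},\ell}\cdot\mathbf{n}_\ell$ for $\ell=1,2$ gives $\sum_\ell \lVert e^i_{\mathbf{q},\ell}\cdot\mathbf{n}_\ell\rVert_{H^{-1/2}(\Gamma)} \lesssim \lVert e^i_{\mathbf{q}}\cdot\mathbf{n}\rVert_{G,-1/2}$.

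\textbf{Step 3 (passing to the limit).} Combining Steps 1 and 2 in the triangle inequality and invoking Corollary~\ref{thm:weak q n convergence}, the right-hand side tends to zero as $i\to\infty$, which proves the claim.

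\textbf{Main obstacle.} I expect Step 2 to be the difficult part. A smooth bounded extension $\mathbf{N}_\ell$ of the normal requires $\Gamma$ to be piecewise smooth, which is more than the Lipschitz assumption in the paper. If that is unavailable, I would instead argue directly with the norm $\lVert\cdot\rVert_{G,-1/2}$: define the flux-jump statement in that norm and note that it dominates testing with normal traces of $H^1$ vector fields. The fallback is then to invoke the surjectivity of the normal trace $H^1\to H^{1/2}_{00}(\Gamma)$ on Lipschitz domains to obtain the same norm equivalence.
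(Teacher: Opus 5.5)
Your proposal is correct and follows the same route as the paper: use $\mathbf{q}^n_1\cdot\mathbf{n}_1+\mathbf{q}^n_2\cdot\mathbf{n}_2=0$, split with the triangle inequality, bound the $H^{-1/2}(\Gamma)$ norm by $\lVert e_{\mathbf{q}}^i\cdot\mathbf{n}\rVert_{G,-1/2}$, and conclude with Corollary~\ref{thm:weak q n convergence}. The paper takes both the flux continuity of the limit and this norm comparison for granted, so your Steps 1 and 2 add justification it omits; one caveat is that your extension $\mathbf{N}_\ell$ must be Lipschitz with $\mathbf{N}_\ell=\mathbf{n}_\ell$ on $\Gamma$, which requires a $C^{1,1}$ (e.g.\ flat) interface rather than a merely piecewise smooth one, because at a kink $\mathbf{n}_\ell$ is discontinuous.
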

    \begin{proof}
        Using the fact that $\mathbf{q}^{n}_1 \cdot \mathbf{n}_1 + \mathbf{q}^{n}_2 \cdot \mathbf{n}_2 = 0$, we apply a triangle inequality to derive
        \begin{align}
            \lVert \mathbf{q}^{i}_1 \cdot \mathbf{n}_1 + \mathbf{q}^{i}_2 \cdot \mathbf{n}_2 \rVert_{H^{-1/2}(\Gamma)} 
            = \lVert e_{\mathbf{q}, 1}^{i} \cdot \mathbf{n}_1 + e_{\mathbf{q}, 2}^{i} \cdot \mathbf{n}_2 \rVert_{H^{-1/2}(\Gamma)} 
            \lesssim \lVert e_{\mathbf{q}}^{i} \cdot \mathbf{n} \rVert_{G, -1/2}.
        \end{align}
        The result follows from Corollary~\ref{thm:weak q n convergence}.
    \end{proof}
    This concludes our analysis section. 

    \section{Numerical results} \label{sec:numeric}
    In this section, we explore three numerical experiments. We begin with two 2D problems that admit a manufactured solution, followed by a 3D problem representing a more realistic case using van Genuchten parameterization. Since the mLRDD-scheme presented in \eqref{eq:final scheme} is not tied to any specific spatial discretization, we can employ any mixed-form discretization, as long as the discrete spaces satisfy $Q_{\ell,h} \subset Q_\ell$ and $P_{\ell,h} \subset P_\ell$, as defined in \eqref{eq:local flux space}. In particular, the normal traces of the discrete flux space must be in $L^2(\Gamma)$. To this end, we employ a standard mixed finite element method. More precisely, we use piecewise constant ($\mathbb{P}_0$) and lowest-order Raviart-Thomas ($\mathbb{RT}_0$) elements \cite{raviart2006mixed} to approximate the pressure and flux, respectively (see \cite{brezzi2012mixed}). This element pair is locally mass conservative and is supported by \textit{a priori} error estimates \cite{raviart2006mixed, brezzi2012mixed}, providing a theoretical basis for the numerical results. All domains $\Omega$ considered in this section are rectangular and are partitioned into unstructured, conforming grids composed of closed \textit{d}-simplices.

    Throughout this section, the mLRDD-scheme is compared to other well-established linearization methods for Richards' equation, namely the Newton method \cite{bergamaschi1999mixed}, modified Picard \cite{celia1990general}, and L-scheme \cite{slodicka2002robust, pop2004mixed, list2016study}. These methods are implemented using a monolithic approach, that is, each scheme is defined on the full domain $\Omega$, where the interface $\Gamma$ only acts as a restriction for the grid assembly. To this extent, we introduce the global flux space for the monolithic schemes as
    \begin{equation*}
        \tilde{Q}  \coloneqq \{
            \tilde{\mathbf{q}} \in H(\mathrm{div}; \Omega) 
            \mid (\tilde{\mathbf{q}} \cdot \mathbf{n})|_{\partial \Omega_N} =0 \}.
    \end{equation*}
    The first presented monolithic scheme is the quadratically convergent monolithic Newton scheme:
    
    \begin{problem}[Monolithic Newton] \label{Newton Method}
        Given $\mathbf{q}^{n,i-1} \in \tilde{Q}$ and $p^{n,i-1} \in P$, find $\mathbf{q}^{n,i}\in \tilde{Q}$ and $p^{n,i}\in P$ such that
        \begin{alignat*}{2}
            \langle \bar{k}^{n,i-1}\mathbf{q}^{n,i}, \tilde{\mathbf{q}} \rangle 
            + \langle \bar{k}^{{n,i-1}^\prime}\mathbf{q}^{n,i-1}p^{n,i}, \tilde{\mathbf{q}} \rangle
            - \langle p^{n,i}, \nabla \cdot \tilde{\mathbf{q}} \rangle
            &= \langle \nabla \eta, \tilde{\mathbf{q}} \rangle
            - \langle p^{n}, \tilde{\mathbf{q}} \cdot \mathbf{n} 
            \rangle_{\partial\Omega} 
            + \langle \bar{k}^{{n,i-1}^\prime}\mathbf{q}^{n,i-1}p^{n,i-1}, \tilde{\mathbf{q}} \rangle, \\
            \langle S^{{n,i-1}^\prime} p^{n,i}, \tilde{p} \rangle
            + \tau \langle \nabla \cdot \mathbf{q}^{n,i}, \tilde{p} \rangle
            &= \tau\langle \mathcal{F}, \tilde{p} \rangle
            - \langle S^{n,i-1} - S^{n-1}, \tilde{p} \rangle
            + \langle S^{{n,i-1}^\prime} p^{n,i-1}, \tilde{p} \rangle,
        \end{alignat*}
        for all $\tilde{\mathbf{q}} \in \tilde{Q}$ and $\tilde{p} \in P$.
    \end{problem}
    \noindent
    Next, we present the monolithic modified Picard scheme, hereafter referred to as monolithic Picard, given as: 
    \begin{problem}[Monolithic Picard]
        Given $\mathbf{q}^{n,i-1} \in \tilde{Q}$ and $p^{n,i-1} \in P$, find $\mathbf{q}^{n,i}\in \tilde{Q}$ and $p^{n,i}\in P$ such that
        \begin{alignat*}{2}
            \langle \bar{k}^{n,i-1}\mathbf{q}^{n,i}, \tilde{\mathbf{q}} \rangle 
            - \langle p^{n,i}, \nabla \cdot \tilde{\mathbf{q}} \rangle
            &= \langle \nabla \eta, \tilde{\mathbf{q}} \rangle
            - \langle p^{n}, \tilde{\mathbf{q}} \cdot \mathbf{n} 
            \rangle_{\partial\Omega}, \\
            \langle S^{{n,i-1}^\prime} p^{n,i}, \tilde{p} \rangle
            + \tau \langle \nabla \cdot \mathbf{q}^{n,i}, \tilde{p} \rangle
            &= \tau\langle \mathcal{F}, \tilde{p} \rangle
            - \langle S^{n,i-1} - S^{n-1}, \tilde{p} \rangle
            + \langle S^{{n,i-1}^\prime} p^{n,i-1}, \tilde{p} \rangle,
        \end{alignat*}
        for all $\tilde{\mathbf{q}} \in \tilde{Q}$ and $\tilde{p} \in P$.
    \end{problem}
    \noindent
    Finally, we compare the mLRDD-scheme to its monolithic counterpart, the L-scheme:
    \begin{problem}[Monolithic L-scheme]
        Given $\mathbf{q}^{n,i-1} \in \tilde{Q}$ and $p^{n,i-1} \in P$, find $\mathbf{q}^{n,i}\in \tilde{Q}$ and $p^{n,i}\in P$ such that
        \begin{alignat*}{2}
            \langle \bar{k}^{n,i-1}\mathbf{q}^{n,i}, \tilde{\mathbf{q}} \rangle 
            - \langle p^{n,i}, \nabla \cdot \tilde{\mathbf{q}} \rangle
            &= \langle \nabla \eta, \tilde{\mathbf{q}} \rangle
            - \langle p^{n}, \tilde{\mathbf{q}} \cdot \mathbf{n} \rangle_{\partial\Omega}, \\
            L\langle p^{n,i}, \tilde{p} \rangle 
            + \tau \langle \nabla \cdot \mathbf{q}^{n,i}, \tilde{p} \rangle
            &= \tau\langle \mathcal{F}, \tilde{p} \rangle
            - \langle S^{n,i-1} - S^{n-1}, \tilde{p} \rangle
            + L\langle p^{n,i-1}, \tilde{p} \rangle,
        \end{alignat*}
        for all $\tilde{\mathbf{q}} \in \tilde{Q}$ and $\tilde{p} \in P$.
    \end{problem}

    To monitor convergence of the solver, a residual-based stopping criterion is used for both the mass balance \eqref{eq:1b} and the Darcy flow equation \eqref{eq:1a}. The residuals $r_\mathbf{q}$ and $r_p$ are computed by substituting the current iterates $\mathbf{q}^{n,i}, p^{n,i}$ into the nonlinear saddle-point system \eqref{eq:limit solution} and subtracting the right-hand sides $b_\mathbf{q}, b_p$, with nonlinear coefficients evaluated at the current iterate. The iteration is deemed converged when either the absolute residual or the residual relative to the right-hand side falls below $10^{-6}$. That is,
    \begin{equation}\label{eq:residual}
        \max_{u \in \{p, \mathbf{q}\}}\min\left(\lVert r_u \rVert_2, \lVert r_u \rVert_2^{rel} \right) < 10^{-6}, \qquad \text{with} \quad \lVert r_u \rVert_2^{rel} \coloneqq \frac{\lVert r_u \rVert_2}{\lVert b_u \rVert_2 + 10^{-14}},
    \end{equation}
    where $\lVert \cdot \rVert_2$ is the Euclidean norm. A small regularization term is added to the relative residual denominator to prevent division by zero. Physically, evaluating these residuals provides a direct measure of the local mass conservation error and the deviation from Darcy's law, ensuring that the converged numerical scheme strictly preserves physical mass and flow dynamics.

    The numerical simulations were run using the Windows Subsystem for Linux (WSL) on a 64-bit Windows 11 host equipped with a 13th Gen Intel Core i7-1370P processor (1.90 GHz) and 32 GB of RAM. The numerical framework was implemented using PyGeon, a Python package for mixed-dimensional discretization in porous media \cite{pygeon}, which expands the PorePy simulation tool \cite{keilegavlen2021porepy}.

    \subsection{Verification via Manufactured Solution}
    \label{subsection: MMS}
    To rigorously verify the spatial and temporal convergence of the scheme, as well as its ability to handle spatially discontinuous material properties, we apply the Method of Manufactured Solutions (MMS).
    
    \subsubsection{Example 1: 2D problem with manufactured solution allowing varying subdomain permeability} \label{example 1}
    
    We consider a two-dimensional, unsaturated porous medium split into two distinct subdomains. The domains $\Omega_\ell$ and the interface $\Gamma$ are defined as:
    \begin{equation*}
        \Omega_1 = (-1,0) \times (0,1), \qquad \Omega_2 = (0,1) \times (0,1), \qquad \Gamma = \{0\} \times [0,1].
    \end{equation*}
    The interface $\Gamma$ represents a sharp discontinuity in the absolute permeability. Neglecting gravity, we prescribe a transient, continuous manufactured pressure field that is strictly negative for $t > 0$, ensuring the global domain remains unsaturated. We first define the spatial functions,  
    \begin{equation*}
        \Psi(x,y) = 10 + x + x(3y^2 - 2y^3) \qquad \text{and} \qquad
        \Theta_\ell(x,y) = \left[ 10^{K_\ell-1} \Psi(x,y) \right]^{1/K_\ell}, \qquad \ell \in \{1,2\}. 
    \end{equation*}
    The domain-dependent pressure field is then given by
    \begin{equation*}
        p(x,y,t) = 1 - (1+t^2)\Theta_\ell(x,y), \qquad (x,y) \in \Omega_\ell, \qquad t > 0,
    \end{equation*}
    for $\ell \in \{1,2\}$. While the saturation-pressure relationship $S(p)$ is uniform across the domain, the effective permeability $k_{\text{eff},\ell}(S)$ introduces heterogeneity across the interface. These are defined as
    \begin{equation*}
        S(p) = \begin{cases} 
            (1 - p)^{-1/2}, & p < 0 \\
            1, & p \geq 0 
        \end{cases} 
        \qquad \text{and} \qquad 
        k_{\text{eff},\ell}(S) = K_\ell S(p)^{2}, \qquad \ell \in \{1,2\},
    \end{equation*}
    where $K_\ell$ represents the absolute permeability of subdomain $\Omega_\ell$. The absolute permeability is set to $K_1=1, K_2=2$ throughout \hyperref[example 1]{Example 1}, except when specified in the parameter study (see Section \ref{par:parameter study}).
    
    When applying these functions to the flow equation \eqref{eq:2b}, this specific choice of effective permeability exactly cancels the time-dependent and material-dependent components of the pressure gradient. This yields a stationary (time-independent) manufactured flux solution that is globally continuous,
    \begin{equation*}
        \mathbf{q}(x,y) = 
            \Psi(x,y)^{-1} \begin{pmatrix} 1 + 3y^2 - 2y^3 \\[5pt] 6xy - 6xy^2 \end{pmatrix}, \qquad (x,y) \in \Omega_1 \cup \Omega_2.
    \end{equation*}
    Finally, substituting the flux and saturation into the governing mass balance equation \eqref{eq:2a} provides the necessary right-hand side source term to drive the system:
    \begin{equation*}
        f(x,y,t) = -\frac{t}{\sqrt{(1+t^2)^3 \Theta_\ell}} 
        + \frac{(6x-12xy)\Psi - \left[(1+3y^2-2y^3)^2+(6xy-6xy^2)^2 \right]}{\Psi^2}, \quad (x,y) \in \Omega_\ell, \quad t > 0,
    \end{equation*}
    for $\ell \in \{1,2\}$.
    
     \begin{table}[pos=ht]
        \centering
        \caption {Domain-dependent initial and boundary conditions for \hyperref[example 1]{Example 1}.}
        \usebox{\CondExOne}
    \end{table}
        
    \subsubsection{Example 2: 2D problem with manufactured solution adapted from literature} \label{example 2}
    We again consider a two-dimensional, unsaturated porous medium split into two distinct subdomains. For this example, we adapt the manufactured solution from \cite[Section 4.1]{seus2018linear}. The domains $\Omega_\ell$ and the internal interface $\Gamma$ are defined as:
    \begin{equation*}
        \Omega_1 = (-1,0) \times (0,1), \qquad \Omega_2 = (0,1) \times (0,1), \qquad \Gamma = \{0\} \times [0,1].
    \end{equation*}
    Neglecting gravity, we prescribe a manufactured pressure field that is strictly negative for $t > 0$, ensuring the domain remains unsaturated. The domain-dependent pressure field is then given by
    \begin{equation*}
        \begin{aligned}
            p(x,y,t)  = \begin{cases} 
                1 - (1+t^2)(1+x^2+y^2), & \qquad (x,y) \in \Omega_1 \\ 
                1 - (1+t^2)(1+y^2), & \qquad (x,y) \in \Omega_2
            \end{cases}, \qquad t > 0, 
        \end{aligned}
    \end{equation*}
    for $\ell \in \{1,2\}$. The saturation $S_\ell(p)$ and the permeability $k_{\ell}(S_\ell(p))$ are defined as
    \begin{equation*}
        S_\ell(p) = \begin{cases} 
            (1 - p)^{-\frac{1}{1+\ell}}, & p < 0 \\
            1, & p \geq 0 
        \end{cases} 
        \qquad \text{and} \qquad 
        k_{\ell}(S_\ell) = S_\ell(p)^{1+\ell}, \qquad \ell \in \{1,2\}.
    \end{equation*}    
    When applying these functions to the flow equation \eqref{eq:2b}, we again get a stationary (time-independent) manufactured flux solution, continuous across the interface
    \begin{equation} \label{eq:flux ex 2}
        \mathbf{q}_1(x,y) = 
            \begin{pmatrix} \frac{2x}{1+x^2+y^2}\\ \frac{2y}{1+x^2+y^2} \end{pmatrix}, \qquad (x,y) \in \Omega_1, \qquad \mathbf{q}_2(x,y) = 
            \begin{pmatrix} 0 \\ \frac{2y}{1+y^2} \end{pmatrix}, \qquad (x,y) \in \Omega_2.
    \end{equation}
    Finally, substituting the flux and saturation into the governing mass balance equation \eqref{eq:2a} provides the necessary right-hand side source term to drive the system:
    \begin{equation*}
        f(x,y,t) = \begin{cases}  \frac{4}{(1+x^2+y^2)^2} 
        - \frac{t}{\sqrt{(1+t^2)^3(1+x^2+y^2)}}, & \qquad (x,y) \in \Omega_1 \\ 
        \frac{2(1-y^2)}{(1+y^2)^2} 
        - \frac{2t}{3\sqrt[3]{(1+t^2)^4(1+y^2)}}, & \qquad (x,y) \in \Omega_2
        \end{cases}, \qquad t > 0,
    \end{equation*}
    for $\ell \in \{1,2\}$. 

    \begin{table}[pos=ht]
        \centering
        \footnotesize
        \caption{Domain-dependent initial and boundary conditions for \hyperref[example 2]{Example 2}.}
        \usebox{\CondExTwo}
    \end{table}
            
    \begin{rmk}
    To have a correct formulation for the manufactured solutions in \ref{subsection: MMS}, the solutions must implicitly preserve the physical transmission conditions across the interface, i.e., $\llbracket \mathbf{q} \cdot \mathbf{n}\rrbracket= 0 $ and $\llbracket p \rrbracket= 0$. By evaluating $p(x=0,y,t)$ and $\mathbf{q}(x=0,y) \cdot \mathbf{n}$, it is trivial to show that the pressure is continuous across the interface $\Gamma$.
    \end{rmk}

    \subsubsection{Results}
    The following section collects results from \hyperref[example 1]{Example 1} and \hyperref[example 2]{Example 2}. The stabilization parameter $L$ and Robin-parameter $\alpha$ for \hyperref[example 1]{Example 1} and \hyperref[example 2]{Example 2} are chosen to be $L = 0.013, \alpha=0.6$ and $L=0.26,\alpha=0.21$, respectively, except when explicitly stated. As unstructured grids are employed, the mean cell diameter $h$ over the global domain in question serves as the mesh size.
    
    \paragraph{Solution field.} Figures~\ref{fig:pressure map ex 1} and \ref{fig:pressure map ex 2} show the pressure map $p(x,y)$ with flux field for \hyperref[example 1]{Example 1} and \hyperref[example 2]{Example 2}, respectively, at the final time $T=0.5$, with timestep $\tau=0.01$. For \hyperref[example 2]{Example 2}, one can clearly see in Figure~\ref{fig:pressure map ex 2} that the flux field is parallel to the interface at $x=0$, yielding zero normal flux at the interface, matching the manufactured flux solution in \eqref{eq:flux ex 2}. From the two solution fields, one can also observe that the pressure continuity at the interface is achieved.

    \begin{figure}
        \centering
        \usebox{\simulationExOne}
        \caption{Pressure profile $p(x,y)$ with flux field $\mathbf{q}$ for \hyperref[example 1]{Example 1} at $T=0.5$, with $\tau=0.01$, $h\approx0.013$.}
        \label{fig:pressure map ex 1}
    \end{figure}

    \begin{figure}
        \centering
        \usebox{\simulationExTwo}
        \caption{Pressure profile $p(x,y)$ with flux field $\mathbf{q}$ for \hyperref[example 2]{Example 2} at $T=0.5$, with $\tau=0.01$, $h\approx0.013$.}
        \label{fig:pressure map ex 2}
    \end{figure}

    \paragraph{Order of accuracy.}
    \label{par: order of accuracy MMS}
    For any variable $u$, let $\varepsilon_u^n = u^n - u_h^{n}$ denote the error between the manufactured solution and its discrete approximation $u_h^{n}$ at time step $n$. Tables~\ref{tab:convergence ex 1} and~\ref{tab:convergence ex 2} report the errors and convergence orders for \hyperref[example 1]{Example 1} and \hyperref[example 2]{Example 2}, respectively, while Tables~\ref{tab:convergence Q ex 1} and \ref{tab:convergence Q ex 2} report the error and convergence order for the components of the $Q$-norm as defined in \eqref{eq:local flux space}. Relative norms $\lVert \varepsilon_u^n \rVert_X / \lVert u^n\rVert_X$ are used for \hyperref[example 1]{Example 1}, while absolute norms $\lVert\varepsilon_u^n\rVert_X$ are used for \hyperref[example 2]{Example 2}, where $X$ denotes the appropriate norm for each variable. This distinction is necessary, as previously mentioned, the manufactured normal flux vanishes identically at the interface (see \ref{eq:flux ex 2}), rendering the relative norm undefined. The temporal discretization error is isolated by setting the timestep size sufficiently small, here chosen as $\tau = 10^{-6}$, with a final time $T = 10^{-5}$. The convergence order at refinement level $j$ for solutions at timestep $n$ is computed as
    \begin{equation*}
        \frac{\log\left(\lVert\varepsilon_{u,j-1}^n\rVert_X \,/\, 
        \lVert\varepsilon_{u,j}^n\rVert_X\right)}
        {\log\left(h_{j-1} \,/\, h_{j}\right)}, \qquad u \in\{p, \mathbf{q}, g\},
    \end{equation*}
    in the appropriate norm $X$. As Tables~\ref{tab:convergence ex 1}, \ref{tab:convergence Q ex 1}, \ref{tab:convergence ex 2}, and~\ref{tab:convergence Q ex 2} show, both examples exhibit first-order convergence in all variables, consistent with the expected order of accuracy for the $\mathbb{RT}_0$--$\mathbb{P}_0$ discretization pair~\cite{brezzi2012mixed, raviart2006mixed}.

    \begin{table}
        \centering
        \caption{Spatial convergence for \hyperref[example 1]{Example 1}: errors $\varepsilon_u$ and convergence orders for pressure $p$, flux $\mathbf{q}$, and Robin-variable $g$ in their respective norms, under mesh refinement with mean cell diameter $h$, evaluated at $T = 10^{-5}$ with $\tau = 10^{-6}$.}
        \label{tab:convergence ex 1}
        \begin{threeparttable}
        \convergenceExOneFirst
        \begin{tablenotes}
            \footnotesize
            \item[$rel:$] Relative norm $\lVert \varepsilon_u^n\rVert_X/ \lVert u^n\rVert_X$.
        \end{tablenotes}
        \end{threeparttable}
    \end{table}
    
    \begin{table}
        \centering
        \caption{Spatial convergence of the decomposed $Q$-norm for \hyperref[example 1]{Example 1}: errors $\varepsilon_u$ and convergence orders for flux $\mathbf{q}$, divergence $\nabla \cdot \mathbf{q}$, and normal trace $\mathbf{q} \cdot \mathbf{n}$ in their respective norms, under mesh refinement with mean cell diameter $h$, evaluated at $T = 10^{-5}$ with $\tau = 10^{-6}$.}
        \label{tab:convergence Q ex 1}
        \begin{threeparttable}
        \convergenceExOneSecond
        \begin{tablenotes}
            \footnotesize
            \item[$rel:$] Relative norm $\lVert \varepsilon_u \rVert_X / \lVert u\rVert_X$.
        \end{tablenotes}
        \end{threeparttable}
    \end{table}

    \begin{table}
        \centering
        \caption{Spatial convergence for \hyperref[example 2]{Example 2}: errors $\varepsilon_u$ and convergence orders for pressure $p$, flux $\mathbf{q}$, and Robin-variable $g$ in their respective norms, under mesh refinement with mean cell diameter $h$, evaluated at $T = 10^{-5}$ with $\tau = 10^{-6}$.}
        \label{tab:convergence ex 2}
        {\convergenceExTwoFirst}
    \end{table}
    
    \begin{table}
        \centering
        \caption{Spatial convergence of the decomposed $Q$-norm for \hyperref[example 2]{Example 2}: errors $\varepsilon_u$ and convergence orders for flux $\mathbf{q}$, divergence $\nabla \cdot \mathbf{q}$ and normal trace $\mathbf{q} \cdot \mathbf{n}$ in their respective norms, under mesh refinement with mean cell diameter $h$, evaluated at $T = 10^{-5}$ with $\tau = 10^{-6}$.}
        \label{tab:convergence Q ex 2}
        {\convergenceExTwoSecond}
    \end{table}
    
    \paragraph{Convergence of the iterative scheme.} \label{par:iterative convergence MMS}
    To evaluate the convergence of the mLRDD-scheme~\ref{FinalScheme} numerically, we monitor the mass and flux residuals \eqref{eq:residual} against the iteration number, as shown in Figure~\ref{fig:inner convergence shared}. The Robin-variable is also monitored, but since $g_\ell^{n,i}$ is set explicitly from the flux and Robin-variable of the neighboring subdomain at the previous iteration (see~\eqref{eq:final scheme c}), no equation residual is defined for $g$. Instead, the iterate difference $\lVert g_\ell^{n,i} - g_\ell^{n,i-1}\rVert_{L^2(\Gamma)}$ is used as a measure of convergence for the interface coupling across $\Gamma$, which vanishes as the transmission conditions are satisfied. The flux jump $\lVert \llbracket \mathbf{q}^{n,i} \cdot \mathbf{n} \rrbracket \rVert_{L^2(\Gamma)}$ is also monitored to verify that flux continuity across $\Gamma$ is recovered upon convergence. For both examples, all monitored quantities exhibit linear convergence. Also, the Robin-variable and flux jump tend to zero, indicating continuity across the interface. The convergence analysis from \ref{sec:analysis} guarantees convergence of the scheme without prescribing a rate; these numerical results supplement the analysis findings by indicating that the guaranteed convergence is first-order. As the tolerance for reaching convergence is governed by both the mass and flow residuals falling below a certain threshold, one can observe in Figure~\ref{fig:inner convergence shared} that the mass residual is the limiting factor for reaching convergence at an earlier iteration.
        
    \begin{figure}
        \centering
        \begin{subfigure}[t]{0.49\textwidth}
            \centering
            \resizebox{0.90\linewidth}{!}{\usebox{\iterErrorExOne}}
            \label{fig:inner convergence ex 1}
        \end{subfigure}
        \hfill
        \begin{subfigure}[t]{0.49\textwidth}
            \centering
            \resizebox{0.90\linewidth}{!}{\usebox{\iterErrorExTwo}}
            \label{fig:inner convergence ex 2}
        \end{subfigure}
    
        \vspace{1ex}
        \centering
        \pgfplotslegendfromname{iter-error-legend}
    
        \vspace{1ex}
        \caption{Convergence of the iterative scheme for \hyperref[example 1]{Example 1} and \hyperref[example 2]{Example 2}: semi-logarithmic plot of the mass residual $\lVert r_p^{n,i} \rVert_2^{rel}$ and flow residual  $\lVert r_\mathbf{q}^{n,i}\rVert_2^{rel}$, the interface flux jump $\lVert \llbracket \mathbf{q}^{n,i} \cdot \mathbf{n} \rrbracket \rVert_{L^2(\Gamma)}$, and the Robin-variable iterate difference $\lVert g^{n,i} - g^{n,i-1}\rVert_{L^2(\Gamma)}^{rel}$, against iteration number $i$, at $T = 0.5$ with $\tau = 0.01$, $h \approx 0.013$.}
        \label{fig:inner convergence shared}
    \end{figure}
    
    To compare the residual decay of the mLRDD-scheme against monolithic solvers and explore mesh-dependencies, we plot the mass and flow residuals against iteration number for varying mesh sizes in Figure~\ref{fig:residuals shared}. As expected for monolithic approaches with a good initial guess, Newton requires the fewest iterations, and is followed closely by Picard. The monolithic L-scheme requires more iterations and varies a lot between the two examples. The proposed mLRDD-scheme requires the most amount of iterations of all the schemes, but is comparable to the monolithic L-scheme in \hyperref[example 2]{Example 2}. Regarding the effect of spatial refinement, the mass residual decay is independent of the mesh size for all solvers, consistent with the theoretical mesh-independence of the L-scheme linearization \cite{list2016study}. A slight mesh dependence is observed in the flow residual across all methods, present equally in the mLRDD-scheme and its monolithic counterparts.
    
    \begin{figure}
        \centering
        \begin{subfigure}[t]{0.49\textwidth}
            \centering
            \resizebox{0.90\linewidth}{!}{\usebox{\residualPComparisonExOne}}
            \label{fig:p_residual_ex_one}
        \end{subfigure}
        \hfill
        \begin{subfigure}[t]{0.49\textwidth}
            \centering
            \resizebox{0.90\linewidth}{!}{\usebox{\residualQComparisonExOne}}
            \label{fig:q_residual_ex_one}
        \end{subfigure}
    
        \vspace{2ex}
    
        \begin{subfigure}[t]{0.49\textwidth}
            \centering
            \resizebox{0.90\linewidth}{!}{\usebox{\residualPComparisonExTwo}}
            \label{fig:p_residual_ex_two}
        \end{subfigure}
        \hfill
        \begin{subfigure}[t]{0.49\textwidth}
            \centering
            \resizebox{0.90\linewidth}{!}{\usebox{\residualQComparisonExTwo}}
            \label{fig:q_residual_ex_two}
        \end{subfigure}
    
        \vspace{2ex}
        \centering
        \pgfplotslegendfromname{shared-residual-legend-ex-two}
    
        \vspace{1ex}
        \caption{Comparison of residual decay between methods for \hyperref[example 1]{Example 1} and \hyperref[example 2]{Example 2}: Semi-logarithmic plot of mass residual $\lVert r_p^{n,i}\rVert_2^{rel}$ and flow residual $\lVert r_\mathbf{q}^{n,i}\rVert_2^{rel}$ decay for the mLRDD-scheme and monolithic Newton, Picard, and L-scheme over different mesh sizes at $T=0.5$, with $\tau = 0.01$.}
        \label{fig:residuals shared}
    \end{figure}

    \paragraph{Robustness.} \label{par: robustness MMS}
    For the next numerical result, we investigate the robustness of the mLRDD-scheme, in comparison to the monolithic methods. The convergence of Newton and Picard is only guaranteed when the initial guess is sufficiently close to the exact solution, which in practice requires sufficiently small $\tau$ when using the solution from the previous time step as initialization \cite{park1995mixed, radu2006newton, list2016study}. The L-scheme is free of this constraint. While the theoretical convergence of the mLRDD-scheme requires the parameter conditions stated in Section~\ref{thm: convergence S q}, including a uniform bound on $\lVert \mathbf{q}^{n,i}_\ell\rVert_{L^\infty}$, which may not be fulfilled under a poor initial guess, we investigate its empirical behavior in this setting.
    
    To this end, we replace the standard initialization $(\mathbf{q}^{n,0}, p^{n,0}) = (\mathbf{q}^{n-1}, p^{n-1})$ with constant initial guesses: $p^{n,0} = 0$ and $\mathbf{q}^{n,0} = (0,0)^T$ for \hyperref[example 1]{Example 1} and $p^{n,0} = -3$ and $\mathbf{q}^{n,0} = (1,1)^T$ for \hyperref[example 2]{Example 2}. As shown in Figure~\ref{fig:wrong guess}, Newton fails to converge in both examples, exhibiting persistent oscillations, terminated after 100 iterations; only the first 50 of 100 iterations are shown for clarity. Picard converges in \hyperref[example 1]{Example 1} after an initial oscillatory phase, but diverges in \hyperref[example 2]{Example 2}. Both the L-scheme and the mLRDD-scheme exhibit monotone residual decay from the first iteration, demonstrating robust convergence. Again, the mLRDD-scheme uses the most amount of iterations, but is comparable to the L-scheme for \hyperref[example 2]{Example 2}.
    
    \begin{figure}
        \centering
        \begin{subfigure}[t]{0.49\textwidth}
            \centering
            \resizebox{0.90\linewidth}{!}{\usebox{\wrongGuessPExOne}}
        \end{subfigure}
        \hfill
        \begin{subfigure}[t]{0.49\textwidth}
            \centering
            \resizebox{0.90\linewidth}{!}{\usebox{\wrongGuessQExOne}}
            \label{fig:wrong guess q ex 1}
        \end{subfigure}
    
        \vspace{2ex}
    
        \begin{subfigure}[t]{0.49\textwidth}
            \centering
            \resizebox{0.90\linewidth}{!}{\usebox{\wrongGuessPExTwo}}
            \label{fig:wrong guess p ex 2}
        \end{subfigure}
        \hfill
        \begin{subfigure}[t]{0.49\textwidth}
            \centering
            \resizebox{0.90\linewidth}{!}{\usebox{\wrongGuessQExTwo}}
            \label{fig:wrong guess q ex 2}
        \end{subfigure}
        
        \vspace{1ex}
        \centering
        \pgfplotslegendfromname{wrong-guess-error-legend}
    
        \caption{Residual decay for different schemes with wrong initial guess for \hyperref[example 1]{Example 1} and \hyperref[example 2]{Example 2}: Semi-logarithmic plot of mass residual $\lVert r_p^{n,i}\rVert_2^{rel}$ and flow residual $\lVert r_\mathbf{q}^{n,i}\rVert_2^{rel}$ decay for the mLRDD-scheme and monolithic Newton,  Picard, and L-scheme with wrong initial guess, with $\tau = 0.01$.}
        \label{fig:wrong guess}
    \end{figure}

    \paragraph{Computational Performance.} \label{par: performance MMS}
    Next, we investigate the computational performance of the mLRDD-scheme in comparison to the monolithic solvers. We monitor the average number of inner iterations, average computational time per inner iteration, and total runtime of \hyperref[example 1]{Example 1} and \hyperref[example 2]{Example 2}. To get insight into how simulation performance scales with the problem size, both examples are run for different mesh sizes. Also, as the mLRDD-scheme is well-suited for parallelization due to the independent subdomain problems within each inner iteration, each subdomain problem is solved in parallel using a separate worker process. This is achieved through a straightforward parallelization strategy, requiring minimal changes to the overall code structure. The performance of the parallelized approach is also compared against a serial implementation where the subdomain problems are solved sequentially. As seen in Table~\ref{tab:performance combined} for both examples, the mLRDD-scheme has comparable average time per iteration to Picard and the L-scheme at the coarsest mesh size, but outperforms the monolithic approaches for finer mesh sizes. The Newton method has the lowest average number of iterations, though it has the largest time per iteration. For total runtime, Newton and Picard perform the best due to their low iteration count, and the mLRDD-scheme generally outperforms the L-scheme due to its low average time per iteration. The parallelized mLRDD-scheme is generally faster than the serial version, though at the coarsest mesh, the overhead of parallelization can outweigh its benefit. We note that the current parallelization is not fully optimized, and further performance gains could be achieved through more advanced implementations. Overall, the mLRDD-scheme's advantage over the monolithic approaches becomes more pronounced as the problem size grows.
    
    \begin{table}[htbp]
        \centering
        \caption{Simulation performance metrics for the mLRDD-scheme and the monolithic Newton, Picard, and L-scheme, for \hyperref[example 1]{Example 1} and \hyperref[example 2]{Example 2}, at $T=1$ with $\tau=0.001$.}
        \vspace{0.5em}
        {\performanceCombined}
        \label{tab:performance combined}
    \end{table}

    \paragraph{Parameter study.} \label{par:parameter study}
    Finally, we performed a parameter study on the Robin-parameter $\alpha$, considering only \hyperref[example 1]{Example 1}. For this study, we set the maximum number of linear iterations to $i=100$. We first investigate the relationship between $\alpha$ and the stabilization parameter $L$ via a parameter sweep over different discretization levels. The results are presented in Figure~\ref{fig:alpha_vs_L}, where each heatmap shows the average number of iterations $i$ for different $(\alpha, L)$ pairs at a fixed discretization level $(\tau, h)$. The heatmaps show no spatial dependency for $\alpha$ and $L$, but a clear temporal dependency as the parameter ranges for convergence change with the timestep size $\tau$.  

    \begin{figure}
        \centering
        \begin{minipage}[c]{0.91\textwidth}
            \centering
            \begin{subfigure}[t]{0.32\linewidth}
                \centering
                \resizebox{1.00\linewidth}{!}{\usebox{\AlphaVsLOne}}
                \label{fig:alpha_vs_l_one}
            \end{subfigure}
            \begin{subfigure}[t]{0.32\linewidth}
                \centering
                \resizebox{1.00\linewidth}{!}{\usebox{\AlphaVsLTwo}}
                \label{fig:alpha_vs_l_two}
            \end{subfigure}
            \begin{subfigure}[t]{0.32\linewidth}
                \centering
                \resizebox{1.00\linewidth}{!}{\usebox{\AlphaVsLSix}}
                \label{fig:alpha_vs_l_three}
            \end{subfigure}
    
            \vspace{1ex}
    
            \begin{subfigure}[t]{0.32\linewidth}
                \centering
                \resizebox{1.00\linewidth}{!}{\usebox{\AlphaVsLFour}}
                \label{fig:alpha_vs_l_four}
            \end{subfigure}
            \begin{subfigure}[t]{0.32\linewidth}
                \centering
                \resizebox{1.00\linewidth}{!}{\usebox{\AlphaVsLFive}}
                \label{fig:alpha_vs_l_five}
            \end{subfigure}
            \begin{subfigure}[t]{0.32\linewidth}
                \centering
                \resizebox{1.00\linewidth}{!}{\usebox{\AlphaVsLThree}}
                \label{fig:alpha_vs_l_six}
            \end{subfigure}
        \end{minipage}%
        \begin{minipage}[c]{0.07\textwidth}
            \centering
            \resizebox{1.8\linewidth}{!}{\usebox{\AlphaVsLColorbar}}
        \end{minipage}
    
        \caption{Heatmaps of different Robin-parameter $\alpha$ and stabilization parameter $L$ pairs over different discretization levels for \hyperref[example 1]{Example 1}. Each heatmap displays average iterations $i$ per timestep for at a specific timestep size $\tau$ and mean cell diameter $h$ pair, at $T=0.5$.}
        \label{fig:alpha_vs_L}
    \end{figure}

    Next, we examine the relationship between $\alpha$ and the subdomain permeabilities by varying the absolute permeability $K_\ell$ in each subdomain $\Omega_\ell$. As the choice of absolute permeability in \hyperref[example 1]{Example 1} affects the Lipschitz constant of the saturation -- and thus the theoretical lower bound for the stabilization parameter $L$ to achieve convergence -- $L$ is chosen as $0.51 \max_{\ell\in\{1,2\}}L_{S_\ell}$. This choice will ensure that convergence is not hindered by an improperly scaled $L$, and gives consistency across parameter sweeps. In Figure~\ref{fig:alpha_vs_k}, each subplot corresponds to a fixed $K_1$ and each curve to a fixed $K_2$. For each $(K_1, K_2)$ pair, the curves exhibit a clear minimum at an intermediate value of $\alpha$, with iteration count increasing for both smaller and larger values. When $K_2$ is small, the iteration count hits the maximum threshold across all tested $\alpha$ values, indicating that no choice of Robin-parameter within the tested range satisfies extremely low permeability in $\Omega_2$. Figure~\ref{fig:optimal_alpha_heatmap} summarizes the main findings in Figure~\ref{fig:alpha_vs_k} by reporting the optimal $\alpha$, i.e., the value yielding the lowest average iteration count for each $(K_1, K_2)$ pair. The optimal Robin-parameter is primarily governed by the larger of the two permeabilities, where, as $\max(K_1, K_2)$ increases, the optimal $\alpha$ decreases systematically. For low permeabilities, this trend changes somewhat, hinting at a dependency on the permeability ratio $K_1 / K_2$ as well. This suggests that the contrast across the interface $\Gamma$ plays a role beyond the per-subdomain permeability, and $\alpha$ should be chosen per interface, in accordance with the permeability structure of the global domain, rather than treated as a fixed constant.
        
    \begin{figure}
        \centering
        \begin{subfigure}[t]{0.32\textwidth}
            \centering
            \resizebox{1.00\linewidth}{!}{\usebox{\AlphaVsKOneA}}
            \label{fig:alpha_vs_k_one}
        \end{subfigure}
        \begin{subfigure}[t]{0.32\textwidth}
            \centering
            \resizebox{1.00\linewidth}{!}{\usebox{\AlphaVsKOneB}}
            \label{fig:alpha_vs_k_two}
        \end{subfigure}
        \begin{subfigure}[t]{0.32\textwidth}
            \centering
            \resizebox{1.00\linewidth}{!}{\usebox{\AlphaVsKOneC}}
            \label{fig:alpha_vs_k_three}
        \end{subfigure}

        \vspace{0ex}
                
        \begin{subfigure}[t]{0.32\textwidth}
            \centering
            \resizebox{1.00\linewidth}{!}{\usebox{\AlphaVsKOneD}}
            \label{fig:alpha_vs_k_four}
        \end{subfigure}
        \begin{subfigure}[t]{0.32\textwidth}
            \centering
            \resizebox{1.00\linewidth}{!}{\usebox{\AlphaVsKOneE}}
            \label{fig:alpha_vs_k_five}
        \end{subfigure}
        \begin{subfigure}[t]{0.32\textwidth}
            \centering
            \resizebox{1.00\linewidth}{!}{\usebox{\AlphaVsKOneF}}
            \label{fig:alpha_vs_k_six}
        \end{subfigure}
        
        \vspace{0ex}
        
        \centering
        \pgfplotslegendfromname{K2-legend}
        \caption{Effect of Robin-parameter $\alpha$ on number of iterations $i$ for different permeabilities, using \hyperref[example 1]{Example 1}: multiple semi-logarithmic plots of average number of iterations $i$ per timestep against different Robin-parameter $\alpha$ values. Each figure has a fixed absolute permeability $K_\ell$ for domain $\ell=1$, where each curve represent a specified $K_\ell$ for domain $\ell=2$, with $h\approx0.026$, $\tau=0.01$ at $T=0.5$}
        \label{fig:alpha_vs_k}
    \end{figure}

    \begin{figure}
        \centering          
        \resizebox{0.60\linewidth}{!}{\usebox{\OptimalAlpha}}
        \caption{Optimal Robin-parameter $\alpha$ for different combinations of permeabilities, using \hyperref[example 1]{Example 1}: heatmap of different pairs of absolute permeability $K_\ell$ for domains $\ell=1$ and $\ell=2$. Each pair is colored by the optimal value of Robin-parameter $\alpha$, chosen from $\alpha$ giving the lowest average iteration $i$ for each $(K_1,K_2)$ pair in Figure~\ref{fig:alpha_vs_k}. NaN indicates that the maximum iteration count of $i=100$ was reached.}
        \label{fig:optimal_alpha_heatmap}
    \end{figure}
    
    \subsection{Example 3: Heterogeneous multi-domain 3D problem with van Genuchten parameterization} \label{example 3}
    To evaluate the performance of the proposed numerical scheme under realistic, physically demanding conditions, we adopt an extension of a recognized benchmark problem from \cite{haverkamp1977comparison, schneid2000hybrid} and \cite{list2016study}, amongst others. Unlike the manufactured solutions used to validate the scheme against theory, this test case evaluates the robustness of the solver against the severe nonlinearities inherent to multi-material soil physics.
    
    We consider a three-dimensional, highly heterogeneous porous medium, specifically
    \begin{equation*}
            \Omega = (-1, 1) \times (0, 1) \times (0, 1). 
    \end{equation*}
   To simulate complex stratigraphic layering and discontinuous material interfaces, the domain is partitioned into eight distinct subdomains, 
    $\{ \Omega_\ell \}_{\ell=1}^8$. Each subdomain is assumed to be internally homogeneous, with spatial coordinates defined as:
    \begin{align*}
        \Omega_1 &= (-1, 0) \times (0.5, 1) \times (0.5, 1), 
        & \Omega_5 &= (-1, 0) \times (0.5, 1) \times (0, 0.5),  \\
        \Omega_2 &= (-1, 0) \times (0, 0.5) \times (0.5, 1),
        & \Omega_6 &= (-1, 0) \times (0, 0.5) \times (0, 0.5), \\
        \Omega_3 &= (0, 1) \times (0.5, 1) \times (0.5, 1), 
        & \Omega_7 &= (0, 1) \times (0.5, 1) \times (0, 0.5), \\
        \Omega_4 &= (0, 1) \times (0, 0.5) \times (0.5, 1), 
        & \Omega_8 &= (0, 1) \times (0, 0.5) \times (0, 0.5).
    \end{align*}

    \begin{table}[htbp]
        \centering
        \caption{Subdomain materials, stabilization parameter $L$ for different subdomain types, and Robin-parameter $\alpha$ for different interface types, for \hyperref[example 3]{Example 3}.}
        \vspace{0.5em}
        {\exThreeDomains}
    \end{table}

    The domain consists of three different porous materials: Sandstone ($\Omega_{4,5,6}$), Silt Loam ($\Omega_{1,2,3}$), and Clay ($\Omega_{7,8}$). The varying flow capabilities of these materials create strong discontinuities at the interfaces, especially at intersecting interfaces. To govern the fluid flow, we utilize a van Genuchten--Mualem parameterization \cite{van1980closed} for the effective saturation $\Phi_\ell(p_\ell)$, the water saturation $S_\ell(p_\ell)$, and the relative permeability $k_\ell(S_\ell)$. The system is defined as
    \begin{equation*}
        \begin{aligned}
            \Phi_\ell(p_\ell) &= \begin{cases} 
                \dfrac{1}{\left(1 + (-\alpha_\ell p_\ell)^{\hat{n}_\ell}\right)^{m_\ell}}, & p_\ell < 0 \\ 
                1, & p_\ell \geq 0 
            \end{cases}, \quad m_\ell = 1 - \frac{1}{\hat{n}_\ell}, \\[5pt]
            S_\ell(p_\ell) &= S_{l,r} + (S_{l,s} - S_{l,r}) \; \Phi_\ell(p_\ell), \qquad
            k_\ell(S) = \sqrt{\Phi_\ell(p_\ell)} \left( 1 - \left( 1 - \Phi_\ell(p_\ell)^{\frac{1}{m_\ell}} \right)^{m_\ell} \right)^2,
        \end{aligned}
    \end{equation*}
    for $\ell \in \{1, \ldots, 8\}$. The material-specific parameters, alongside the non-dimensionalized collective terms utilized by the solver, can be seen in Table \ref{tab:vGM_parameters_expanded}.

    To drive the fluid flow, the system is subjected to a combination of Dirichlet and homogeneous Neumann boundary conditions. We define two Dirichlet boundary patches with prescribed pressures, thereby inducing flow through the resulting pressure gradients. The first patch, $\partial\Omega_{D_{1}}$, is located on the top surface of the domain, simulating localized infiltration. The second patch, $\partial\Omega_{D_{2}}$, is positioned on the lower-right boundary, acting as a drainage region when a certain internal pressure criterion is met. For the remainder of the exterior boundary, $\partial\Omega_{N}$, no-flow conditions are imposed, resulting in 
    \begin{equation*}
        \begin{aligned}
            \partial\Omega_{D_1} &= \{(x,y,z) \in \partial\Omega \mid z = 1,\; x < 0,\; y > 0.5 \}, &\qquad 
            \partial\Omega_{D_2} &= \{(x,y,z) \in \partial\Omega \mid x = 1,\; y < 0.5,\; z < 0.5 \}, \\[5pt]
            \partial\Omega_{D} &= \partial\Omega_{D_1} \cup \partial\Omega_{D_2}, &\qquad 
            \partial\Omega_{N} &= \partial\Omega \setminus \partial\Omega_{D}.
        \end{aligned}
    \end{equation*}
    Initially, the entire domain is prescribed a uniform, strongly negative pressure, representing a highly unsaturated (dry) state with zero initial flux. To simulate a wetting event, the boundary pressures are increased gradually via a linear ramp over a time period $\hat{t}$. After $t > \hat{t}$, the Dirichlet conditions are held constant. The complete initial and boundary value formulation is given by
    \begin{equation*}
        \begin{aligned}
            \tilde{p}(x,y,z,0) &= -1.0,
            &\qquad \tilde{\mathbf{q}}(x,y,z,0) &= (0,0,0)^T, \\[5pt]
            \tilde{p}(x, y, z, t) &= -1.0 + \min\!\left(\frac{t}{\hat{t}}, 1\right)
            \begin{cases}
                0.9 &\text{on } \partial\Omega_{D_1} \\
                1.0 - z &\text{on } \partial\Omega_{D_2}
            \end{cases},
            &\qquad \tilde{\mathbf{q}}(x, y, z, t) \cdot \mathbf{n} &= 0 \quad \text{on} \ \partial\Omega_N.
        \end{aligned}
    \end{equation*}

    \begin{figure}
        \centering
        \resizebox{0.6\linewidth}{!}{\usebox{\exThree}}
        \\[5pt]
        \caption{Structure of the 3D domain with annotated boundary conditions, materials, and initial water table for \hyperref[example 3]{Example 3}.}
        \label{fig:layout example 3}
    \end{figure}
    
    \begin{table}[H]
        \centering
        \caption{The van-Genuchten--Mualem parameters for \hyperref[example 3]{Example 3}, including dimensionless collective terms based on scaling factors $p_c = 14.8 \times 10^3$ Pa, $L_c = 1.48$ m, and $t_c = 41440$ s.}
        \label{tab:vGM_parameters_expanded}
        \resizebox{\linewidth}{!}{\usebox{\vanGenuchtenParams}}
    \end{table}

    \subsubsection{Results}
    In the following paragraphs, we present the results from \hyperref[example 3]{Example 3}. The stabilization parameter $L$ is prescribed per subdomain type, i.e., each subdomain made up of the same material has the same $L$ value. Robin-parameter $\alpha$ is given per interface type; that is, each interface having the same adjacent materials shares a common $\alpha$ value.

    \paragraph{Water table.} \label{par:water table ex 3}
    Firstly, we observe the evolution of the water table ($p=0$) at different selected timesteps. As seen in Figure \ref{fig:watertable}, as the water enters the top and side sand patches (see Figure \ref{fig:layout example 3} for the domain structure), the water table rises first in the rearmost sand-column. Next, the foremost sand-block is filled, showing that the water enters the highly permeable sandstone first. Later, the water table ascends the silt loam regions, with minimal water penetration in the low permeable clay layer. In all, the fluid flow behavior is consistent with the initial and boundary conditions and soil material composition. 

    \begin{figure}
        \centering
        \resizebox{\textwidth}{!}{%
        \begin{tabular}{@{}ccc@{}l@{}}
            \usebox{\watertableZero} & \usebox{\watertablePointTwo}& \usebox{\watertablePointTwoTwo} & \multirow{2}{*}{\usebox{\watertableColorbar}} \\[2pt]
            $T=0.0$ & $T=0.2$ & $T=0.22$ &  \\[8pt]
            \usebox{\watertablePointTwoFive}& \usebox{\watertablePointThree} & \usebox{\watertableOne}      & \\[2pt] 
            $T=0.25$ & $T=0.3$ & $T=1.0$ & \\
        \end{tabular}
        }
        \caption{Water table ($p=0$) evolution for \hyperref[example 3]{Example 3} at selected times, with $h\approx0.122$, $\tau=0.01$.}
        \label{fig:watertable}
    \end{figure}

    \paragraph{Convergence of the iterative scheme.} \label{par:inner convergence ex 3}
    To evaluate the convergence of the iterative scheme for the highly demanding 3D problem, we use the same approach as for the 2D cases in Section \ref{subsection: MMS}. In Figure \ref{fig:inne convergence ex 3}, different error quantities are plotted for each iteration $i$ at a single timestep $n$. The errors are decreasing overall, showing close to linear convergence towards later iterations. In contrast to the simpler 2D problems, the flow residual is now the limiting factor for faster convergence.
    
    \begin{figure}
        \centering
        \resizebox{0.60\textwidth}{!}{\usebox{\iterErrorExThree}}
        \caption{Convergence of the iterative scheme for \hyperref[example 3]{Example 3}: semi-logarithmic plot of the mass residual $\lVert r_p^{n,i} \rVert_2^{rel}$ and flow residual $\lVert r_\mathbf{q}^{n,i}\rVert_2^{rel}$, the interface flux jump $\lVert \llbracket \mathbf{q}^{n,i} \cdot \mathbf{n} \rrbracket\rVert_{L^2(\Gamma)}$, and the Robin-variable iterate difference $\lVert g^{n,i} - g^{n,i-1}\rVert_{L^2(\Gamma)}^{rel}$, against iteration number $i$, at $T = 0.5$ with $\tau = 0.01$, $h \approx 0.122$.}
        \label{fig:inne convergence ex 3}
    \end{figure}

    The convergence of the mLRDD-scheme is also compared to the monolithic Picard and L-scheme by showing the mass and flow residual decay for each method at a single timestep in Figure \ref{fig:residuals ex 3}. The effect of mesh-refinement on the residual decay is also demonstrated by running the example on different mesh sizes. The Newton method is not shown in the results as it diverged for all tested discretization levels. The results show a clear mesh dependency on the residual for the monolithic methods; as the grid gets refined, the monolithic methods have extreme fluctuations in the residual for half of their iteration count, before reaching a steady decay. The mLRDD-scheme has the same mesh dependency for the flow residual; however, the mass residual has a more stable decay over different mesh sizes.

    \begin{figure}
      \centering
      \begin{subfigure}[t]{0.49\textwidth}
        \centering
        \resizebox{0.95\linewidth}{!}{\usebox{\residualPComparisonExThree}}
        \label{fig:p_residual}
      \end{subfigure}
      \hfill
      \begin{subfigure}[t]{0.49\textwidth}
        \centering
        \resizebox{0.95\linewidth}{!}{\usebox{\residualQComparisonExThree}}
        \label{fig:q_residual}
      \end{subfigure}
    
      \vspace{2ex}
      \centering
      \pgfplotslegendfromname{shared-residual-legend-ex-three}
    
      \vspace{1ex}
        \caption{Comparison of residual decay between methods for \hyperref[example 3]{Example 3}: Semi-logarithmic plot of mass residual $\lVert r_p^{n,i}\rVert_2^{rel}$ and flow residual $\lVert r_\mathbf{q}^{n,i}\rVert_2^{rel}$ decay for the mLRDD-scheme and monolithic Picard and L-scheme over different mesh sizes at $T=0.5$, with $\tau = 0.01$.}  \label{fig:residuals ex 3}
    \end{figure}

    \paragraph{Computational Performance.} \label{par:performance ex 3}
    Lastly, we look at the computational performance of the mLRDD-scheme in relation to the monolithic approaches, and the advantages of parallelization. As done for the 2D problems in Section \ref{subsection: MMS}, Table \ref{tab:performance ex 3} shows the average number of iterations $i$, the average time per iteration, and total runtime of different runs with progressively refined grids. As in the case of the previous results for the 3D problem, the Newton method is omitted, as it failed to converge. For the coarsest grids, Picard has the lowest total runtime due to its low iteration count. The mLRDD-scheme outperforms the monolithic schemes for the finest grid, with the parallelized approach having an approximately $31 \times$ and $37 \times$ faster per-iteration runtime than the L-scheme and Picard, respectively. This gives the parallelized mLRDD-scheme a total runtime of approximately $2.3$ hours, saving over $16$ and $35$ hours compared to Picard and the L-scheme, respectively. This significant reduction in computational time is most likely due to the increased resolution of sharp fronts and steep gradients for finer grids, especially at interfaces. This strengthens the global nonlinearity, presenting severe difficulties for the monolithic solvers. The mLRDD-scheme, however, avoids this problem by confining the nonlinearities to individual subdomains -- an advantage that becomes increasingly noticeable as the grid is refined. Figure~\ref{tab:speedup ex3} further illustrates the scaling behavior of the parallelized mLRDD-scheme, showing consistent speedup over the serial implementation as the number of worker processes increases. While the speedup does not scale linearly with increasing processes due to, e.g., process communication overhead, the parallelization strategy used requires minimal changes to the original implementation and yields a clear computational benefit. 

    \begin{table}[htbp]
        \centering
        \caption{Simulation performance metrics for \hyperref[example 3]{Example 3}, at $T=1$ with $\tau=0.01$.}
        {\performanceExThree} \label{tab:performance ex 3}
    \end{table}

    \begin{table}[htbp]
        \centering
        \caption{Speedup of the parallelized mLRDD-scheme based on the number of parallel processes for \hyperref[example 3]{Example 3}, at $T=1$ with $\tau=0.01$ and $h \approx 0.11$.}
        {\coresExThree} \label{tab:speedup ex3}
    \end{table}

    With this, we conclude the numerical results.

    \section{Conclusion}\label{sec:conclusion} 
    In this paper, we considered the mixed formulation of Richards' equation, modeling the flow of water through variably saturated porous media. For heterogeneous domains, the mixed form of Richards' equation yields a highly nonlinear, degenerate saddle-point problem that poses a triple challenge: a spatial discretization that must respect mass conservation and flux behavior, a linearization robust enough for sharp nonlinearities, and a strategy for managing strongly contrasting properties across material interfaces. Our proposed mixed L-scheme Robin-type domain decomposition scheme (mLRDD-scheme) addresses all three simultaneously. By discretizing in time with backward Euler and combining the L-scheme with non-overlapping Robin-type domain decomposition, the scheme isolates near-homogeneous linear subproblems, coupled through physically consistent interface conditions. The convergence of the scheme was proved under standard assumptions and mild constraints on the timestep size and stabilization/Robin-parameter. As the mLRDD-scheme itself is not tied to a particular choice of spatial discretization, any $H(\mathrm{div};\Omega)-L_2$ conforming method may be employed. Thus, by choosing $RT0-P0$ mixed finite elements as the spatial discretization, we obtained local mass conservation and normal flux continuity. 
    
    Numerical experiments in two and three spatial dimensions supported the theoretical results, demonstrating first-order convergence for all monitored quantities, and confirming continuity across the interfaces. Compared to standard monolithic approaches, the mLRDD-scheme was shown to be more robust than the Newton and modified Picard methods, particularly when handling strong material heterogeneities and poor initial guesses. Furthermore, it demonstrated shorter computational time than the monolithic solvers for highly nonlinear large-scale problems. As the subdomain problems are independent at each iteration, the scheme lends itself naturally to parallel solving. We implemented a simple parallelization strategy, demonstrating that the parallelized mLRDD-scheme yields an even more notable reduction in computational time. Finally, a parameter study revealed that the optimal stabilization parameter $L$ and Robin-parameter $\alpha$ are dependent on the timestep size but not on the mesh size. Moreover, $\alpha$ is shown to scale primarily with the maximum permeability and the permeability contrast between subdomains at the interface. 

    \section*{Software and data availability}
        The source code used to produce all numerical results presented in this paper 
        is openly available \href{https://doi.org/10.5281/zenodo.21995483}{here}. The source code is written in the programming language Python, with the source code requiring the software PorePy \cite{keilegavlen2021porepy} (see \url{https://github.com/pmgbergen/porepy}) and PyGeon \cite{pygeon} (see \url{https://github.com/compgeo-mox/pygeon}). For information related to the source code, contact \href{mailto:asmund.synnevag@uib.no}{asmund.synnevag@uib.no}. No experimental data is associated with this work; all results are generated directly by running the provided code. A Docker image is included in the repository to ensure full reproducibility of the computational environment.
        
    \section*{CRediT authorship contribution statement} 
        \textbf{Åsmund v.B. Synnevåg}: Conceptualization, Data curation, Formal analysis, Investigation, Methodology, Software, Validation, Visualization, Writing -- original draft, Writing -- review \& editing.
        \textbf{Wietse M. Boon}: Conceptualization, Formal analysis, Methodology, Resources, Software, Supervision, Validation, Writing -- original draft, Writing -- review \& editing.
        \textbf{Florin A. Radu}: Conceptualization, Methodology, Resources, Supervision, Validation, Writing -- original draft, Writing -- review \& editing.
        \textbf{Sarah E. Gasda}: Funding acquisition, Project administration, Writing -- review \& editing.
    
    \section*{Declaration of competing interests}
        The authors declare that they have no known competing financial interests or personal relationships that could have appeared to influence the work reported in this paper.
    
    \section*{Acknowledgments}
    This work was carried out as part of the MuPSI project (Multiscale Pressure-Stress Impacts on fault integrity for multi-site regional  CO$_2$ storage), funded through the Clean Energy Transition Partnership (CETP), project number CETP-2023-00298. Funding was provided by the Research Council of Norway (RCN), Scottish Enterprise, Dutch Research Council (NWO), Agencia Estatal de Investigaci\'{o}n (AEI), and the U.S. Department of Energy (DoE), with contributions from Storegga Ltd, Equinor ASA, Norske Shell AS, and EBN Capital BV.

    \appendix
    \section{Appendix} \label{sec:appendix}

     Let $\Omega \subset \mathbb{R}^d $, $d\in\{ 2,3 \}$ be an open, bounded domain with a Lipschitz-continuous boundary $\partial\Omega$. We assume that the boundary is partitioned into a Dirichlet part $\partial\Omega_D$ and a Neumann part $\partial\Omega_N$ such that $\partial\Omega = \overline{\partial\Omega}_D \cup \overline{\partial\Omega}_N$ and $\partial\Omega_D \cap \partial\Omega_N = \emptyset$. The domain $\Omega$ is partitioned into two non-overlapping Lipschitz subdomains $\Omega_1$ and $\Omega_2$ such that $\overline{\Omega} = \overline{\Omega}_1 \cup \overline{\Omega}_2$ and $\Omega_1 \cap \Omega_2 = \emptyset$. 
    Let $\partial\Omega_1$ and $\partial \Omega_2$ denote their respective boundaries, separated by the internal interface $\Gamma \coloneqq \partial \Omega_1 \cap \partial \Omega_2$, which forms a $(d-1)$-dimensional manifold in $\overline{\Omega}$. For simplicity, we assume that each subdomain borders (part of) the Dirichlet boundary, i.e., $\partial\Omega_\ell \cap \partial \Omega_D \ne \emptyset$ for each subdomain index $\ell \in \{1,2\}$. 
    Let $\mathbf{n}_\ell$ denote the outward unit normal vector on $\partial\Omega_\ell$; in particular, $\mathbf{n}_1 = -\mathbf{n}_2$ on $\Gamma$. 
    
    Let $L^2(X)$ denote the space of square-integrable functions on $X \in\{\Omega, \Omega_\ell, \Gamma\}$, equipped with the inner product $\langle u,v \rangle_X \coloneqq \int_X u \ v \ \mathrm{d}x$ (with $\mathbf u \cdot \mathbf v$ replacing $uv$ for vector-valued functions) and induced norm $\lVert u \rVert_{L^2(X)} \coloneqq \langle u,u \rangle_X^{1/2}$. Let $\langle u,v \rangle_{\Omega_\ell}$ denote the inner product on $L^2(\Omega_\ell)$, $\ell \in \{1,2\}$, and $\lVert u \rVert_{\Omega_\ell}$ the corresponding norm. We omit the subscript to refer to the global $L^2$ inner product and norm
    \begin{align} 
        \langle u, v \rangle &\coloneqq \sum_{\ell=1}^2 \langle u_\ell, v_\ell \rangle_{\Omega_\ell}, &
        \lVert u \rVert^2 &\coloneqq \sum_{\ell=1}^2 \lVert u_\ell \rVert_{L^2(\Omega_\ell)}^2. &
    \end{align}
    Similarly to inner products and norms, the omission of the subscript $\ell$ on any variable, e.g., $u$, denotes the global quantity defined over the entire domain $\Omega$. Next, we introduce the standard Sobolev spaces. The Hilbert space of scalar functions with square-integrable weak derivatives, and its subspace with vanishing trace on $\partial\Omega$, respectively, are defined as
    \begin{equation*}
        H^1(\Omega) \coloneqq \{ \phi \in L^2(\Omega) \mid \nabla \phi \in [L^2(\Omega)]^d \}, \qquad H_0^1(\Omega) \coloneqq \{ \phi \in H^1(\Omega) \mid \phi = 0 \text{ on } \partial\Omega \}.
    \end{equation*}
    For vector fields with square-integrable divergence, we define
    \begin{equation*}
        H(\mathrm{div}; \Omega) \coloneqq \{ \tilde{\mathbf{q}} \in [L^2(\Omega)]^d \mid \nabla \cdot \tilde{\mathbf{q}} \in L^2(\Omega) \},
    \end{equation*}
    equipped with the norm
    \begin{equation*}
        \lVert \tilde{\mathbf{q}} \rVert_{H(\mathrm{div}; \Omega)}^2 \coloneqq \lVert \tilde{\mathbf{q}} \rVert^2 + \lVert \nabla \cdot \tilde{\mathbf{q}} \rVert^2.
    \end{equation*}
On the interface $\Gamma$, we make use of the trace space $H_{00}^{1/2}(\Gamma)$ from \cite{lions2012non}, consisting of restrictions of $H_0^1(\Omega)$ functions to $\Gamma$.
    Its dual space is denoted by $H^{-1/2}(\Gamma)$, equipped with the norm
    \begin{equation*}
        \lVert \psi \rVert_{H^{-1/2}(\Gamma)} \coloneqq \sup_{\phi \in H^1_0(\Omega)} \frac{\langle \psi, \phi \rangle_{\Gamma}}{\lVert \phi \rVert_{H^1(\Omega)}}.
    \end{equation*}
    By $H_0^1(\Gamma)$, we denote the space of functions in $H^1(\Gamma)$ that vanish at the boundary of $\Gamma$. For the flux and pressure variables, we consider the following function spaces defined on the subdomains
    \begin{align} 
        Q_\ell  &\coloneqq \{
            \tilde{\mathbf{q}}_\ell \in H(\mathrm{div}; \Omega_\ell) 
            \mid (\tilde{\mathbf{q}}_\ell \cdot \mathbf{n}_\ell)|_{\Gamma} \in L^2(\Gamma), \ 
            (\tilde{\mathbf{q}}_\ell \cdot \mathbf{n}_\ell)|_{\partial \Omega_N \cap \partial\Omega_\ell} =0 \}, \\
        P_\ell  &\coloneqq  L^2(\Omega_\ell).
    \end{align}
    and we equip $Q_\ell$ with the graph norm
    \begin{equation*}
        \lVert\tilde{\mathbf{q}}_\ell\rVert^2_{Q_\ell}  \coloneqq  \lVert\tilde{\mathbf{q}}_\ell\rVert_{\Omega_\ell}^2 + \lVert\nabla \cdot \tilde{\mathbf{q}}_\ell\rVert_{\Omega_\ell}^2 + \lVert\tilde{\mathbf{q}}_\ell \cdot \mathbf{n}_\ell\rVert^2_{L^2(\Gamma)}.
    \end{equation*}
    The function space for the global flux variable on the full domain $\Omega$ is defined as: 
    \begin{equation*}
        \tilde{Q}  \coloneqq \{
            \tilde{\mathbf{q}} \in H(\mathrm{div}; \Omega) 
            \mid (\tilde{\mathbf{q}} \cdot \mathbf{n})|_{\partial \Omega_N} =0 \}.
    \end{equation*}
    We, moreover, define the global product spaces
    \begin{equation*} 
        Q \coloneqq  Q_1 \times Q_2, \qquad
        P \coloneqq  P_1 \times P_2. \qquad
    \end{equation*}
    Finally, we end this subsection with the function space for the interface variables, given by
    \begin{align}
        G &\coloneqq [L^2(\Gamma)]^2, &
        \langle g, \tilde g \rangle_G &\coloneqq \sum_{\ell=1}^2 \langle g_\ell, \tilde g_\ell \rangle_\Gamma, &
        \lVert g \rVert_G^2 &\coloneqq \sum_{\ell=1}^2 \lVert g_\ell \rVert_\Gamma^2.
    \end{align}
    Note that $G$ is the normal trace space of $Q$.

    \begin{table}
        \centering
        \caption{Notation and description for physical parameters, variables, and numerical parameters.}
        \usebox{\Notations}
        \label{tab:Notations}    
    \end{table}

    \bibliographystyle{cas-model2-names}
    
    \bibliography{references}

\end{document}